\newtheorem{theorem}{Theorem}[section]
\newtheorem{corollary}{Corollary}[section]
\newtheorem{lemma}[theorem]{Lemma}
\newtheorem{claim}{Claim}
\newtheorem{conjecture}[theorem]{Conjecture}
\newtheorem{problem}[theorem]{Problem}
\newtheorem{remark}[claim]{Remark}
\begin{document}
\title
{\LARGE \textbf{Some results on total weight choosability\thanks{ supported by NSFC (No. 12261071), NSF of Qinghai Province (No. 2020-ZJ-920), NSFC Grant (No. 11901263, 12071194, 12271228) and NSFC of Gansu Province (No. 21JR7RA511).} }}

\author{ Tingzeng Wu$^{a,b}$\thanks{{Corresponding author.\newline
\emph{E-mail address}: mathtzwu@163.com, jianxuanluo@163.com, gaoyp@lzu.edu.cn
}},  Jianxuan Luo$^{a}$, Yuping Gao$^{c}$\\
{\small $^{a}$ School of Mathematics and Statistics, Qinghai Nationalities University, }\\
{\small  Xining, Qinghai 810007, P.R.~China} \\
{\small $^{b}$Qinghai Institute of Applied Mathematics, Xining, Qinghai, 810007, P.R.~China} \\
{\small $^{c}$School of Mathematics and Statistics, Lanzhou University,} \\
{\small  Lanzhou, Gansu, 730000, P.R.~China}}

\date{}

\maketitle
\noindent {\bf Abstract:} A graph $G=(V,E)$ is called
$(k,k')$-choosable if
for any total list assignment $L$
which assigns to each vertex $v$ a set $L(v)$ of $k$ real numbers,
and assigns to each edge $e$ a set $L(e)$ of $k'$ real numbers,
there is a mapping $f:V\cup E\rightarrow \mathbb{R}$
such that $f(y)\in L(y)$ for any $y\in V\cup E$
and for any two adjacent vertices $v, v'$,
$\sum_{e\in E(v)}f(e)+f(v)\neq \sum_{e\in E(v')}f(e)+f(v')$, where $E(x)$ denotes the set of incident edges of a vertex $x\in V(G)$.
In this paper, we characterize  a sufficient condition on
 $(1,2)$-choosable of graphs.
We show that every connected $(n,m)$-graph is both $(2,2)$-choosable and $(1,3)$-choosable if  $m=n$ or $n+1$,
where $(n,m)$-graph denotes the graph with $n$ vertices and $m$ edges.
Furthermore, we prove that some  graphs obtained by
some graph operations are $(2,2)$-choosable.

\noindent {\bf Keywords:} Total-weighting; List assignment; Permanent index; Line graph; Graph operation
\section{Introduction}%
Let $G=(V(G),E(G))$ be a graph with $n$ vertices. The number of edges in $G$ is denoted by $m(G)$ or $m$ for short. We also call $G$ as an $(n,m)$-\emph{graph}.
For a subgraph $H$ of $G$,
let $G-E(H)$ denotes the subgraph obtained from $G$ by deleting the edges of $H$.
A \emph{matching} in a graph is a set of non-loop edges with no common endvertices, and an endvertex in an edge of a matching is said to be \emph{saturated} by the matching. A \emph{perfect matching} in a graph is a matching that saturates every vertex.
The number of perfect matchings of $G$ is denoted by $M(G)$.
For convenience, a path, a cycle and a complete graph with $n$ vertices are denoted by $P_{n}$, $C_{n}$ and $K_{n}$, respectively.

A \emph{total-weighting} of a graph $G$ is a mapping $f$: $V\cup E \rightarrow \mathbb{R}$
which assigns to each vertex and each edge a real number as its weight.
For a total-weighting $f$,
we use $s(v)=f(v)+\sum_{e\in E(v)}f(e)$ to denote the weight of a vertex $v\in V(G)$, where $E(v)$ denotes the set of edges incident with $v$.
If $s(u)\neq s(v)$ for any two adjacent vertices $u,v\in V(G)$, then we call the total-weighting $f$ \emph{proper}.

The list version of total-weighting of graphs was introduced independently by Przyby\l o and Wo\'{z}niak \cite{prz2} and Wong and Zhu \cite{wong2}.
Let $\psi: V\cup E\rightarrow \mathbb{N}^{+} $. A $\psi$-\emph{list assignment} of a graph $G$ is a mapping $L$ which assigns to $z\in V\cup E$ a set $L(z)$ of $\psi(z)$ real numbers. Given a total list assignment $L$, a proper $L$-total weighting is a \emph{proper total
weighting} $\varphi$ with $\varphi(z)\in L(z)$ for all $z\in V\cup E$. We say $G$ is \emph{total weight} $\psi$-\emph{choosable} ($\psi$-\emph{choosable} for short) if for any $\psi$-list assignment $L$, there is a proper $L$-total weighting
of $G$. We say $G$ is \emph{total weight} $(k,k')$-\emph{choosable} ($(k,k')$-choosable for short) if $G$ is $\psi$-total weight choosable, where $\psi(v)=k$ for $v\in V (G)$ and $\psi(e) = k'$ for $e\in E(G)$.

Wong and Zhu \cite{wong2} proposed two Conjectures as follows:

\begin{conjecture}\label{art11}\cite{wong2}
Every graph with no isolated edges is $(1,3)$-choosable.
\end{conjecture}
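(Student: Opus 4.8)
The plan is to attack Conjecture~\ref{art11} through the algebraic (Combinatorial Nullstellensatz) method, reducing it to a purely combinatorial statement about a signed permanent attached to the line graph. Fix an arbitrary orientation $D$ of $G$ and, for each vertex $w$, write the linear form $s(w)=x_w+\sum_{f\in E(w)}x_f$ in the formal variables $\{x_z:z\in V\cup E\}$. Form the graph polynomial
\[
P_G=\prod_{e\in E(G)}\bigl(s(\mathrm{tail}(e))-s(\mathrm{head}(e))\bigr),
\]
a homogeneous polynomial of degree $m$. By the Combinatorial Nullstellensatz, $G$ is $(1,3)$-choosable as soon as $P_G$ contains a monomial $\prod_{e}x_e^{b_e}$ in which every vertex variable has exponent $0$ and every edge variable has exponent $b_e\le 2$, with nonzero coefficient. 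The first observation, which isolates the role of the hypothesis ``no isolated edges,'' is that in the factor $L_e:=s(\mathrm{tail}(e))-s(\mathrm{head}(e))$ the variable $x_e$ itself cancels (its coefficient is $+1-1=0$), so $L_e$ contains only the vertex variables $x_u,x_v$ of its endpoints and the edge variables $x_f$ of the edges $f$ adjacent to $e$ in $L(G)$; since vertex variables are forbidden, each factor must contribute an edge variable $x_{\pi(e)}$ with $\pi(e)$ a neighbour of $e$ in $L(G)$. Having no isolated edges is exactly the statement that $L(G)$ has no isolated vertex, so such a contribution is always available.

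First I would make the permanent formulation precise. An admissible target monomial corresponds to a function $\pi:E(G)\to E(G)$ with $\pi(e)\sim_{L(G)}e$ and $|\pi^{-1}(f)|=b_f\le 2$ for all $f$; writing $\epsilon_e(\pi)=+1$ if $\pi(e)$ shares the tail of $e$ and $-1$ if it shares the head, the coefficient we want equals $\bigl(\prod_f b_f!\bigr)^{-1}\operatorname{perm}(A_{D,\mathbf b})$, where $A_{D,\mathbf b}$ is the $m\times m$ matrix of $0,\pm1$ incidence coefficients with each column $f$ repeated $b_f$ times. The next step is to show that at least one admissible exponent vector $\mathbf b$ exists at all. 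This amounts to orienting each vertex of $L(G)$ towards a neighbour (out-degree one) so that every vertex receives in-degree at most $2$; I would establish this by a flow/degeneracy argument, noting that forced choices (leaves of $L(G)$, i.e.\ pendant edges of $G$ attached at a degree-$2$ vertex) can pile at most two onto any single edge, since an edge has only two endpoints and each endpoint forces at most one such leaf.

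The heart of the argument, and the step I expect to be the main obstacle, is proving that the signed permanent $\operatorname{perm}(A_{D,\mathbf b})$ does not vanish for a well-chosen orientation $D$ and exponent vector $\mathbf b$. My plan is an induction on the structure of $G$ via its block and ear decompositions; after reducing to $2$-connected $G$ (the cut-vertex case needs a separate gluing argument, since the fixed vertex weights couple the weighted sums of adjacent blocks), I would peel a shortest ear, orient it coherently, and track how the permanent factors. The aim of the orientation is to force one dominant assignment $\pi$ aligned with an Eulerian-type closed walk, so that the $\pm1$ signs accumulate consistently along each ear, and then to exhibit a sign-reversing involution on the remaining assignments that pairs terms of opposite sign, leaving an odd and hence nonzero residue. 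The difficulty concentrates in the dense and highly symmetric cases, such as $K_n$ and complete bipartite graphs, where the number of admissible functions $\pi$ is enormous and the $\pm1$ contributions are nearly balanced; here no elementary invariant is yet known to force the permanent nonzero, which is precisely why the statement has remained a conjecture.

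If the direct permanent estimate resists, I would pursue two complementary routes. The first is to enlarge the set of usable monomials: rather than fixing a single $\pi$, sum the coefficient over all $\mathbf b$ with $b_e\le 2$ and seek a generating-function identity that evaluates the aggregate to a manifestly nonzero quantity, trading one delicate permanent for a more stable algebraic total. The second is to adapt the probabilistic and entropy-compression machinery recently successful for the non-list $1$-$2$-$3$ problem, upgrading it to list inputs so as to construct a proper $L$-total weighting directly for every list assignment $L$ of sizes $(1,3)$, thereby bypassing the permanent altogether. I expect a full resolution to combine the reduction to $2$-connected graphs above with one of these global methods to neutralize the cancellation in the dense regime.
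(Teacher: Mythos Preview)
The statement you are attempting to prove is Conjecture~\ref{art11}, which the paper presents as an \emph{open conjecture} of Wong and Zhu and does not prove. The paper states explicitly that this conjecture ``[has] not been solved yet'' and proceeds only to verify it for very restricted families (unicyclic and bicyclic graphs). There is therefore no ``paper's own proof'' to compare your proposal against.

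As for the proposal itself, it is a research plan, not a proof, and you say as much. Your reduction to the Combinatorial Nullstellensatz and to the nonvanishing of $\operatorname{perm}(A_{D,\mathbf b})$ is the standard formulation: it is exactly the statement $\operatorname{pind}(B_G)\le 2$ of Conjecture~\ref{art13}, which the paper also records as open. The genuine gap is precisely the step you identify as ``the heart of the argument'': you have no mechanism that forces the signed permanent to be nonzero in general. The ear-decomposition induction you sketch does not work as stated, because peeling an ear does not factor the permanent cleanly---the column for an edge of the ear interacts with edges outside the ear through the shared endpoints, so there is no block-triangular structure to exploit. Your proposed sign-reversing involution is not specified, and the acknowledgement that ``no elementary invariant is yet known to force the permanent nonzero'' in the dense cases is an admission that the central step is missing. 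The two fallback routes (aggregating over all $\mathbf b$, or adapting entropy compression to lists) are reasonable research directions, but neither is carried out, and the second in particular would require genuinely new ideas beyond what is currently in the literature.

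In short: there is nothing incorrect in your setup, but the proposal does not prove the conjecture, and neither does the paper.
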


\begin{conjecture}\label{art12}\cite{wong2}
Every graph is $(2,2)$-choosable.
\end{conjecture}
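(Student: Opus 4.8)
Conjecture~\ref{art12} asks for $(2,2)$-choosability of \emph{every} graph, so any attack --- in particular on the $m=n$ and $m=n+1$ cases announced in this paper --- has to pass through the Combinatorial Nullstellensatz machinery behind all known partial results, and that is where I would start. For a graph $G$ with $|V(G)|=n$ and $|E(G)|=m$ fix an arbitrary orientation $D$, introduce an indeterminate $x_v$ for each vertex $v$ and $x_e$ for each edge $e$, put $s(v)=x_v+\sum_{e\in E(v)}x_e$, and form the total-weighting polynomial
\[
  P_G \;=\; \prod_{(u,v)\in A(D)} \bigl(s(u)-s(v)\bigr)\ \in\ \mathbb{Z}\bigl[\,x_z : z\in V(G)\cup E(G)\,\bigr].
\]
By the Combinatorial Nullstellensatz with target exponents $t_z=1$ for $z$ in a chosen $m$-set $S\subseteq V(G)\cup E(G)$ and $t_z=0$ otherwise, $G$ is $(2,2)$-choosable once $P_G$ contains the squarefree monomial $\prod_{z\in S}x_z$ with nonzero coefficient; the $(1,3)$ statement of Conjecture~\ref{art11} is the same except that $S$ must use only edge variables (with edge exponents allowed up to $2$). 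So the whole problem becomes: for every $G$, exhibit an orientation $D$ and a set $S$ with $\bigl[\prod_{z\in S}x_z\bigr]P_G\neq 0$.

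Next I would read off that coefficient combinatorially. In a factor $s(u)-s(v)$ the variable $x_{uv}$ occurs with coefficient $1-1=0$ and every other variable with a coefficient in $\{-1,0,1\}$, so $P_G$ is a product of $m$ linear forms over $\{-1,0,1\}$; expanding and collecting, $\bigl[\prod_{z\in S}x_z\bigr]P_G$ equals the permanent of the $m\times m$ matrix $M_D[S]$ with rows indexed by the arcs of $D$, columns by $S$, and $\bigl((u,v),z\bigr)$-entry the coefficient of $x_z$ in $s(u)-s(v)$. The task is thus to choose $(D,S)$ realising a nonzero permanent --- the ``permanent index'' viewpoint --- and my plan would be to build $(D,S)$ by recursion on the structure of $G$.

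I would induct on the cyclomatic number $m-n+1$ of a connected $G$. If $G$ is a tree, root it and give each non-root vertex $v$ its own variable $x_v$; then $M_D[S]$ is triangular in the rooted order with unit diagonal, so its permanent is $1\neq0$ (the $(1,3)$ version forbids vertex variables and needs the separate, known leaf-stripping argument). For the step, first peel off pendant trees: if $v$ is a leaf with edge $e=uv$, solve $G-v$ to get $(D',S')$, then add the arc on $e$ and put $x_v$ into $S$; since no old factor involves $x_v$ while the new factor has $x_v$-coefficient $\pm1$, the enlarged matrix is block-triangular with the same permanent as $M_{D'}[S']$. This already settles the cases $m=n$ and $m=n+1$ of this paper, where after peeling the $2$-core is a cycle, a theta graph, two cycles meeting in a single vertex, or two disjoint cycles joined by a path --- each finished by an explicit orientation and a direct permanent evaluation. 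For a general graph of minimum degree $\ge 2$ I would take an ear decomposition $G=C_0\cup P_1\cup\cdots\cup P_t$, begin from a nonzero-permanent pair $(D_0,S_0)$ for the cycle $C_0$ (cycles are known to be $(2,2)$-choosable), and add the ears one at a time, each time orienting the new ear and feeding one fresh internal vertex or edge into $S$ per new constraint, preserving nonvanishing by Laplace expansion along the added rows.

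That last step is where I expect the real obstacle to sit. Permanents of $\{0,\pm1\}$-matrices carry no sign structure to exploit, so the Alon--Tarsi determinant argument that settles ordinary list colouring has no counterpart here; when the $2$-core is complicated --- short ears, or ears attaching at vertices already used in $S$ --- the relevant minors can cancel for every orientation, and the tidy ``add an ear, keep the permanent nonzero'' claim fails. I would therefore expect this plan to go through only when the $2$-core is simple, in particular for all graphs with $m\le n+1$ --- exactly the unicyclic and bicyclic graphs of this paper --- whereas the full Conjecture~\ref{art12} would still require a genuinely new device for certifying that these permanents do not vanish.
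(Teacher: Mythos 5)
The statement you were asked about is not a theorem of this paper but an open conjecture of Wong and Zhu (Conjecture~\ref{art12}), and the paper contains no proof of it: what the authors actually establish are special cases (unicyclic and bicyclic graphs in Theorems~\ref{art310} and~\ref{art312}, plus some graph operations in Section~4). Your proposal is therefore correctly calibrated --- you do not claim to prove the conjecture, and your closing paragraph accurately identifies why the general case resists: the permanent of a $\{0,\pm1\}$-matrix has no sign or exterior-algebra structure to exploit, so an ear-by-ear induction on the $2$-core cannot be closed in general. That is precisely the obstruction that keeps Conjectures~\ref{art12} and~\ref{art14} open, so no reviewer should read your text as a proof of the statement, and neither should you present it as one.

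Where your sketch is concrete, it coincides in substance with what the paper does. Your reduction of $(2,2)$-choosability to a nonvanishing coefficient of a squarefree monomial in $\prod(s(u)-s(v))$, read off as a permanent of an $m\times m$ submatrix of the arc--variable coefficient matrix, is exactly the ${\rm pind}(A_G)=1$ criterion the paper imports from Alon--Tarsi and Wong--Zhu; your pendant-leaf peeling is Lemma~\ref{art34}; and your ``explicit orientation plus direct permanent evaluation on the $2$-core'' is what Theorems~\ref{art310} and~\ref{art312} carry out, except that the paper routes the core computation through the index-function machinery of Lemma~\ref{art33} (choosing a sink $X$, an acyclic orientation $D$ of $G-E[X]$, and a sub-digraph $D'$ satisfying $1+2d^-_{D'}(v)-d^-_D(v)\ge d^+_{D'}(v)$) rather than writing the permanent out by hand. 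Two small technical cautions if you flesh this out: the coefficient-equals-permanent identity holds as stated only for squarefree target monomials (otherwise a factor $\prod_z t_z!$ appears), and for the $(1,3)$ analogue your tree base case genuinely breaks for a single edge $K_2$, which is why Conjecture~\ref{art11} excludes isolated edges.
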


The \emph{permanent} of an $m\times m$ real matrix $A=[a_{ij}]$,
with $i,j \in \{1,2,\ldots,m\}$, is defined as
$$
{\rm per}(A)=\sum_{\sigma}\prod_{i=1}^{m}a_{i\sigma(i)},
$$
where the summation takes over all permutations $\sigma$ of
$\{1,2,\ldots, m\}$.

The {\em permanent index} of a matrix $A$, denoted by ${\rm pind}(A)$,
is the minimum integer $k$ such that
there exists a matrix $A'$ with ${\rm per}(A')\neq 0$,
each column of $A'$ is a column of $A$
and each column of $A$ occurs in $A'$ at most $k$ times.
Let $A_{G}$ be a matrix
with rows indexed by the edges of $G$
and columns indexed by the vertices and edges of $G$,
where if $e=(u,v)$(oriented from $u$ to $v$), then
\begin{eqnarray*}
A_G[e,y]=
\begin{cases}
1& \text{if $y=v$, or $y\neq e$ is an edge incident to $v$},\\
-1& \text{if $y=u$, or $y\neq e$ is an edge incident to $u$},\\
0& \text{otherwise}.\\
\end{cases}
\end{eqnarray*}
and let $B_{G}$ be the submatrix of $A_{G}$
with those columns of $A_{G}$ indexed by edges.

An {\em index function} of $G$ is a mapping $\eta$,
and it assigns to every vertex or edge $z$ of $G$
a non-negative integer $\eta$.
If $\sum_{y\in V(G)\cup E(G)}\eta(z)=|E(G)|$,
then the index function $\eta$ is {\em valid}.
For an index function $\eta$ of $G$, denote by $A_{(\eta)}$ the matrix,
each of its column is a column of $A_G$,
and each column $A_G(z)$ of $A_G$ can appear up to $\eta(z)$ times in $A_{(\eta)}$.
It is shown in \cite{alon2}  and \cite{wong2} that $G$ is $(2,2)$-choosable if ${\rm pind}(A_{G})=1$,
and $G$ is $(1,3)$-choosable if ${\rm pind}(B_{G})\leq 2$.

Bartnicki et al. \cite{bart} and Wong et al. \cite{wong2} proposed two Conjectures independently
as follows:

\begin{conjecture}\label{art13}\cite{bart}
For any graph $G$ with no isolated edges, ${\rm pind}(B_{G})\leq 2$.
\end{conjecture}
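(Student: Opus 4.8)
By the criterion recalled above ($G$ is $(1,3)$-choosable whenever ${\rm pind}(B_G)\le 2$), the statement is purely linear--algebraic: for every $G$ with no isolated edges one must exhibit a \emph{valid} index function $\eta$ supported on $E(G)$ with $\eta(e)\le 2$ for all $e$ and ${\rm per}(B_{(\eta)})\neq 0$. The plan is to turn this into a signed counting problem and then build $\eta$ by structural induction. First I would record the expansion
\[
{\rm per}(B_{(\eta)})=\Big(\prod_{f\in E}\eta(f)!\Big)\sum_{\sigma}\ \prod_{e\in E}B_G[e,\sigma(e)],
\]
where $\sigma$ ranges over all maps $E\to E$ with fibre sizes $|\sigma^{-1}(f)|=\eta(f)$; each nonzero term assigns to every edge $e$ an edge $\sigma(e)\neq e$ sharing an endpoint with $e$, carrying the sign $+1$ or $-1$ according as $\sigma(e)$ meets the head or the tail of $e$. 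I would also note that reversing the orientation of a single edge flips every entry of exactly one row of $B_G$, hence multiplies ${\rm per}(B_{(\eta)})$ by $-1$; thus ${\rm pind}(B_G)$ is orientation-independent and any convenient orientation may be fixed.

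Next I would reduce to $2$-edge-connected graphs. Pendant structures are rigid: a leaf edge has a single admissible image (its unique neighbouring edge), which in turn forces the image of that neighbour, and so on inward, pinning down a chain of forced $\pm1$ entries. An \emph{attachment lemma} would use this to splice a pendant path or pendant tree onto a core $G'$, setting $\eta=1$ on the attached edges: the new rows contribute a single forced nonzero factor, so ${\rm per}(B_{(\eta)})$ equals ${\rm per}(B'_{(\eta')})$ up to a nonzero scalar. A forest with no $K_2$ component is then disposed of directly as a base case, and in general this isolates the essential difficulty in $2$-edge-connected graphs, for which the cases of cyclomatic number $1$ and $2$ (connected $(n,m)$-graphs with $m=n$ or $m=n+1$) are precisely the base cases already established in this paper.

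For the inductive step I would fix an ear decomposition $C=H_0\subset H_1\subset\cdots\subset H_t=G$ of the $2$-edge-connected graph $G$ and maintain, as induction hypothesis on $H_i$, a valid $\eta_i$ with $\eta_i\le 2$, ${\rm per}(B_{(\eta_i)})\neq 0$, and a controlled reserve of ``slack'' (unused multiplicity together with a prescribed sign pattern) at the two attachment vertices. Adding the next ear $P$ introduces a path of new edges and two attachment points; I would extend $\eta$ along the interior of $P$ by the forced chain of the attachment lemma and spend one reserved unit of multiplicity at each endpoint, so that ${\rm per}(B_{(\eta_{i+1})})$ equals ${\rm per}(B_{(\eta_i)})$ times a nonzero local factor contributed by $P$, plus correction terms coming from the new closed walks routed through $P$.

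The crux---and the point where the argument is genuinely hard---is controlling those correction terms. Each ear that raises the cyclomatic number spawns a new family of competing generalized diagonals whose signs, dictated by the fixed orientation and the parities of the attachment along $P$, may cancel the main term. The bound $\eta\le 2$ is exactly the slack that should let one diagonal dominate: by choosing, at each attachment vertex, which incident edge receives the doubled column, I expect to be able to keep the net signed count nonzero, and I would try to certify this through a congruence ${\rm per}(B_{(\eta)})\not\equiv 0\pmod{p}$ for a suitably chosen small prime $p$, so that simultaneous sign cancellation becomes impossible to arrange. Making this choice uniformly over all ear decompositions and all attachment parities---so that local nonvanishing propagates to the entire graph---is the principal obstacle, and it is precisely the content that keeps Conjecture \ref{art13} open in full generality.
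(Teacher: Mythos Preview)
The statement you are attempting to prove is Conjecture~\ref{art13}, and the paper does \emph{not} prove it; the authors state explicitly that ``the above two conjectures have received a lot of attention. However, they have not been solved yet.'' What the paper establishes are only special cases (unicyclic and bicyclic graphs, Theorems~\ref{art311} and~\ref{art313}), via the reduction Lemmas~\ref{art36}--\ref{art39} (hanging edges, attaching a $P_2$, duplicating a vertex, deleting a thread of length~$4$). There is therefore no paper proof to compare your proposal to, and you should not expect one.

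Your own write-up is, by your final paragraph, an outline rather than a proof: you correctly isolate the obstacle---controlling the signed correction terms when a new ear is added---and concede that resolving it ``is precisely the content that keeps Conjecture~\ref{art13} open in full generality.'' That is an honest assessment; the ear-decomposition scheme with a maintained ``reserve of slack'' is a natural thing to try, but as stated it has no mechanism that forces nonvanishing after the ear step, and the suggested mod-$p$ certificate is only a hope. A couple of smaller points: your ``attachment lemma'' assertion that a leaf edge has a \emph{single} admissible image is only true when the non-leaf endpoint has degree~$2$; at a high-degree attachment vertex there are many admissible images and no forcing, so the reduction to $2$-edge-connected graphs already requires more care than you indicate (compare the paper's Lemmas~\ref{art36} and~\ref{art37}, which add or remove a $P_2$ rather than an arbitrary pendant tree). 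Also, the ``base cases'' you cite are the paper's Theorems~\ref{art311} and~\ref{art313}, whose proofs themselves rely on the same style of reduction lemmas and on direct computation for a finite list of small exceptional cores---they do not supply any sign-control technique that would feed your inductive step. In short, the proposal identifies the right target and the right difficulty, but does not close the gap; the conjecture remains open.
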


\begin{conjecture}\label{art14}\cite{wong2}
For any graph $G$, ${\rm pind}(A_{G})=1$.
\end{conjecture}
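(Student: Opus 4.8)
The plan is to recast ${\rm pind}(A_G)=1$ in the polynomial form that produces $A_G$ and then to attack it by induction on the cyclomatic number. Reading the column indexed by $y$ off the definition of $A_G[e,y]$, one checks that $s(v)-s(u)=\sum_y A_G[e,y]\,x_y$ for an edge $e=(u,v)$, where $x_z$ is a variable attached to $z\in V\cup E$; hence the product $P_G=\prod_{e=(u,v)}\bigl(s(v)-s(u)\bigr)=\prod_{e=(u,v)}\bigl(\sum_y A_G[e,y]\,x_y\bigr)$ has, as the coefficient of a squarefree degree-$m$ monomial $\prod_{z\in S}x_z$ with $|S|=m$, exactly the permanent of the $m\times m$ submatrix of $A_G$ whose columns are those in $S$. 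So ${\rm pind}(A_G)=1$ says precisely that $P_G$ carries a nonzero squarefree top-degree monomial, equivalently that some valid $0$--$1$ index function $\eta$ gives ${\rm per}(A_{(\eta)})\neq0$. The entire task becomes the explicit selection of $m$ distinct columns of $A_G$ with non-vanishing permanent.

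\emph{Base case.} For a forest I would root each component and orient every edge from parent to child, then let $\eta$ be the indicator of the non-root vertices, which are exactly $m$ in number. In the resulting $m\times m$ matrix the cell (edge $e$, its child endpoint) is $+1$, while the only further nonzero cells, each $-1$, lie in (edge $e$, its parent endpoint) and sit strictly higher in the rooting; ordering rows and columns by depth makes the matrix lower triangular with unit diagonal, so ${\rm per}(A_{(\eta)})=\pm1\neq0$. This disposes of cyclomatic number $0$ and, more importantly, provides the triangular skeleton the inductive step will try to preserve.

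\emph{Inductive step.} I would pass to blocks and, inside each $2$-connected block, use an ear decomposition, adding one ear at a time. A new ear supplies several new internal vertices together with a single ``closing'' edge $f=(a,b)$ that raises the cyclomatic number by one; the new internal vertices give fresh vertex columns that serve as triangular pivots for all but one of the new edges exactly as in the base case, so the only new requirement is to match $f$ to a still-unused column carrying a nonzero entry in row $f$---an endpoint of $f$ or an edge incident to $a$ or $b$. Arranging the roots so that $a$'s vertex column is still free, I would set $\eta(a)=1$ and analyse the enlarged permanent as the old triangular term plus the corrections forced by the off-diagonal $-1$'s that the spanning-forest rows deposit in column $a$ and by the entries row $f$ places in columns already used as pivots.

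\textbf{The hard part} is precisely this last bookkeeping. The permanent, unlike the determinant, has no elimination or rank calculus and hence no cancellation-free certificate that the correction terms cannot annihilate the triangular term. When the cyclomatic number is small the free vertex pivots vastly outnumber the closing edges and the surviving permanent can be forced to a single non-cancelling product---this is exactly the mechanism behind the unicyclic ($m=n$) and bicyclic ($m=n+1$) theorems, where at most two closing edges must be placed. For a general graph the closing edges interact through shared incident-edge columns, and the several permutations covering those columns may carry opposite signs, so the triangular argument no longer self-certifies. Closing this gap would require a genuinely new non-vanishing lemma for permanents of the edge-indexed blocks $B_G$ on the $2$-edge-connected core of $G$; producing such a lemma in full generality is, I expect, the true obstacle and the reason Conjecture~\ref{art14} remains open beyond the sparse classes treated here.
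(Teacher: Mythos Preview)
The statement you were asked to prove is a \emph{conjecture}, not a theorem; the paper does not prove it and explicitly presents it as open (attributed to Wong and Zhu). So there is no ``paper's own proof'' to compare against, and your honest conclusion---that the argument breaks down once the cyclomatic number exceeds the regime where free vertex pivots dominate closing edges, and that a general non-vanishing lemma for the permanents involved is the missing piece---is exactly the state of affairs the paper records. You have correctly located the gap rather than papered over it.

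That said, it is worth contrasting your partial machinery with what the paper actually uses to establish the special cases ${\rm pind}(A_U)=1$ for unicyclic $U$ (Theorem~\ref{art310}) and ${\rm pind}(A_B)=1$ for bicyclic $B$ (Theorem~\ref{art312}). You build a triangular permanent for forests and then try to extend via ear decomposition, tracking correction terms to a single triangular product. The paper instead invokes a structural reduction lemma of Wong and Zhu (Lemma~\ref{art33}): one chooses an acyclic orientation $D$ of $G-E[X]$ with a designated sink set $X$, a sub-digraph $D'\subseteq D$, and verifies the degree inequality $\eta(v)+2d^-_{D'}(v)-d^-_D(v)\ge d^+_{D'}(v)$ at every vertex; non-singularity of $\eta$ on $G[X]$ then propagates to $G$. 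For the cycle $C_l$ and the three bicyclic skeletons $B_1,B_2,B_3$ the paper simply writes down explicit orientations $D,D'$ and checks the inequality vertex by vertex. This sidesteps the permanent bookkeeping you flag as the hard part---but only because the orientation lemma itself encapsulates that bookkeeping, and no one knows how to choose $D,D'$ satisfying the inequality for an arbitrary graph. Both approaches stall at the same place for the same reason.
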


The above two conjectures have received a lot of attention.
However, they have not been solved yet,
which can only be proved to be true for some special graphs.
Recently, it was proved in \cite{zhu1}
that every graph with no isolated edges is $(1,5)$-choosable.
Some special graphs are shown to be $(2,2)$-choosable,
such as trees, complete graphs \cite{wong2},
subcubic graphs, 2-trees, Halin graphs, grids \cite{wong1}.
Some special graphs are shown to be $(1,3)$-choosable,
such as complete graphs, complete bipartite graphs,
trees without $K_{2}$ \cite{bart},
Cartesian product of an even number of even cycles,
of $P_{n}$ and an even cycle, of two paths \cite{wong3}.

Wong and Zhu \cite{wong2}
showed that if a graph is $(k,k')$-choosable
then it is $(k+1,k')$-choosable and $(k,k'+1)$-choosable.
Hence there is a natural problem as follows.

\begin{problem}\label{art15}
Characterizing  graphs that are $(1,2)$-choosable.
\end{problem}
In response to the above problem,
some results have been obtained.
Wong et al. \cite{wong5} proved that complete bipartite graphs without $K_{2}$
are $(1,2)$-choosable;
Chang et al. \cite{chang} proved that a tree with even number of edges is $(1,2)$-choosable.

In this paper, we focus on Problem \ref{art15} and Conjectures \ref{art13} and \ref{art14}, we show that some graphs are $(2,2)$-choosable
as well as $(1,3)$-choosable.
The remainder of this paper is organized as follows.
In Section~2, we determine a sufficient condition for a graph to be  $(1,2)$-choosable. As  applications, we show that an $(n,m)$-graph is  $(1,2)$-choosable when $m=n-1$, $n$ and $n+1$.
In Section~3, we prove that all $(n,m)$-graphs are $(2,2)$-choosable
as well as $(1,3)$-choosable, where $m=n$ and $n+1$.
In the final section, we prove that some graphs under
some graph operations are $(2,2)$-choosable.

\section{A solution to Problem~\ref{art15}}

In this section, we will characterize a sufficient condition to answer Problem \ref{art15}.
Chang et al.~\cite{chang} gave an important result on $(1,2)$-choosable  of graphs as follows.
\begin{lemma}(\cite{chang})\label{art21}
If ${\rm per}(B_{G})\neq 0$.
Then $G$ is $(1,2)$-choosable.
\end{lemma}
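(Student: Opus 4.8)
The plan is to prove this by Alon's Combinatorial Nullstellensatz, converting a proper $L$-total weighting into a nonvanishing point of a polynomial built from $B_G$. Fix a $\psi$-list assignment $L$ with $\psi(v)=1$ for every $v\in V(G)$ and $\psi(e)=2$ for every $e\in E(G)$; write $L(v)=\{\phi(v)\}$ for the forced vertex weights and introduce an indeterminate $x_e$ for each edge $e$. For $e=(u,v)$ oriented as in the definition of $A_G$, set
$$P_e=\Big(\phi(v)+\sum_{e'\in E(v)}x_{e'}\Big)-\Big(\phi(u)+\sum_{e'\in E(u)}x_{e'}\Big),\qquad P=\prod_{e\in E(G)}P_e .$$
A point $(x_e)_{e\in E(G)}\in\prod_{e\in E(G)}L(e)$ at which $P\neq 0$ is precisely an assignment of edge weights which, together with the $\phi(v)$'s, is a proper $L$-total weighting, so it suffices to exhibit such a point.

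The core step is to identify the coefficient of the monomial $\prod_{e\in E(G)}x_e$ in $P$ with ${\rm per}(B_G)$. The key observation is that in each $P_e$ the variable $x_e$ appears once with coefficient $+1$ (coming from $E(v)$) and once with coefficient $-1$ (coming from $E(u)$), so it cancels; writing $c_e=\phi(v)-\phi(u)$, one obtains $P_e=c_e+\sum_{e'\in E(G)}B_G[e,e']\,x_{e'}$, the linear part having exactly the entries of the row of $B_G$ indexed by $e$. Since $P$ is a product of $|E(G)|$ affine forms, the monomial $\prod_e x_e$ is produced only by taking the linear term $B_G[e,\sigma(e)]\,x_{\sigma(e)}$ from the $e$-th factor for some bijection $\sigma$ of $E(G)$, and summing $\prod_e B_G[e,\sigma(e)]$ over all such $\sigma$ is exactly ${\rm per}(B_G)$.

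Granting ${\rm per}(B_G)\neq 0$, I would then conclude as follows. The polynomial $P$ has degree at most $|E(G)|$ because each $P_e$ is affine, and since the coefficient of the degree-$|E(G)|$ monomial $\prod_{e\in E(G)}x_e$ equals ${\rm per}(B_G)\neq 0$, in fact $\deg P=|E(G)|$. Hence $P$ satisfies the hypotheses of the Combinatorial Nullstellensatz with the exponent vector $t_e=1$ for all $e$ (whose sum is $\deg P$), and since each list satisfies $|L(e)|=2>1=t_e$, there is a point of $\prod_{e\in E(G)}L(e)$ where $P\neq 0$. By the first paragraph this yields a proper $L$-total weighting, so $G$ is $(1,2)$-choosable.

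I expect the cancellation bookkeeping in the second step and the coefficient-versus-permanent matching to be routine. The one point needing care is that ${\rm per}(B_G)\neq 0$ must be invoked twice in the last step: once to force $\deg P=|E(G)|$, so that $(1,\dots,1)$ is a term of \emph{maximal} degree as the Nullstellensatz demands, and once for the nonvanishing of its coefficient; no lower-degree cancellation can then interfere. Consistently, a graph with an isolated edge, where the statement should fail, has a zero row in $B_G$ and hence ${\rm per}(B_G)=0$.
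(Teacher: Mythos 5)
Your proof is correct and is essentially the standard argument for this result: the paper itself states Lemma~\ref{art21} as a citation to Chang et al.\ without proof, and the cited proof is exactly this Combinatorial Nullstellensatz computation identifying the coefficient of $\prod_{e}x_e$ in $\prod_e P_e$ with ${\rm per}(B_G)$. Your bookkeeping (cancellation of $x_e$ in $P_e$, the bijection argument for the coefficient, and the need for ${\rm per}(B_G)\neq 0$ to force $\deg P=|E(G)|$) is accurate, and the isolated-edge sanity check is right since such an edge gives a zero row of $B_G$.
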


A Sachs graph is a simple graph such that each component is regular and has degree 1 or 2.
In other words the components are single edges and cycles.
Merris et al. \cite{mer}
gave a formula for calculating the permanent of any graph $G$:
\begin{eqnarray*}
{\rm per}(A(G))=|(-1)^n\sum_H2^{k(H)}|,
\end{eqnarray*}
where the summation takes over all Sachs subgraphs $H$ of order $n$ in $G$,
and $k(H)$ is the number of cycles in $H$.

\begin{theorem}\label{art22}
Let $G$ be a connected graph with $m$ edges. If the number of perfect matchings in the line graph $L(G)$ of $G$ is odd,
then $G$ is $(1,2)$-choosable.
\end{theorem}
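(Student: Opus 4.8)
The plan is to reduce the statement to Lemma~\ref{art21} by showing that the parity of $\operatorname{per}(B_G)$ is controlled by the number of perfect matchings of the line graph $L(G)$. Recall that $B_G$ is the square $m\times m$ submatrix of $A_G$ whose columns are indexed by the edges of $G$, with entries $0,\pm1$: for an (oriented) edge $e=(u,v)$, the column $B_G(f)$ of an edge $f\neq e$ has a nonzero entry in row $e$ exactly when $f$ and $e$ share an endpoint, with the sign recording whether the shared endpoint is the head or the tail of $e$, and the diagonal entry $B_G[e,e]$ is $0$. Since $f$ and $e$ are adjacent in $L(G)$ precisely when they share an endpoint in $G$, the support of $B_G$ is exactly the adjacency pattern of $L(G)$ (an $m$-vertex graph), so $B_G$ is a signed adjacency matrix of $L(G)$ with zero diagonal. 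The first step is to make this identification precise and to record that $\operatorname{per}(B_G) \equiv \operatorname{per}(\text{sign-forgotten }B_G) = \operatorname{per}(A(L(G))) \pmod 2$, because reducing mod $2$ kills all the signs.

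Next I would invoke the Merris–Kleitman–Saks type formula quoted just before the theorem: $\operatorname{per}(A(L(G)))$ is, up to sign, $\sum_H 2^{k(H)}$ over Sachs subgraphs $H$ of $L(G)$ spanning all $m$ vertices, where $k(H)$ counts the cycle-components of $H$. Modulo $2$, every term with $k(H)\geq 1$ vanishes, so $\operatorname{per}(A(L(G))) \equiv (\text{number of Sachs subgraphs with }k(H)=0) \pmod 2$; and a spanning Sachs subgraph with no cycles is exactly a perfect matching of $L(G)$. Hence $\operatorname{per}(B_G) \equiv M(L(G)) \pmod 2$. If $M(L(G))$ is odd, then $\operatorname{per}(B_G)$ is odd, in particular nonzero, and Lemma~\ref{art21} immediately gives that $G$ is $(1,2)$-choosable. (One should note $m\geq 1$ here; if $G$ has no edges the statement is vacuous or trivial, and a single-vertex or single-edge case can be checked directly or excluded by "connected with $m$ edges, $m$ odd implies the line graph has a perfect matching only if $m$ is even" — I would simply remark that the hypothesis forces $m$ even whenever it is nonvacuous.)

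The only real subtlety — and the step I would watch most carefully — is the passage through the sign structure. The permanent is not sign-invariant the way the determinant's absolute value is, so the argument genuinely needs the mod-$2$ reduction: I am not claiming $\operatorname{per}(B_G)=\pm\operatorname{per}(A(L(G)))$, only congruence mod $2$, which is all Lemma~\ref{art21} requires. A second small point is the diagonal of $B_G$: $B_G[e,e]=0$, which is consistent with $A(L(G))$ having zero diagonal, so no spurious fixed-point terms appear in the permanent expansion and the Sachs-subgraph interpretation applies verbatim. Assembling these observations in the order (i) support of $B_G$ equals $E(L(G))$, (ii) $\operatorname{per}(B_G)\equiv\operatorname{per}(A(L(G)))\pmod2$, (iii) Merris formula mod $2$ counts perfect matchings of $L(G)$, (iv) apply Lemma~\ref{art21}, completes the proof.
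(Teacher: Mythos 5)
Your proposal is correct and follows essentially the same route as the paper: forget the signs of $B_G$ modulo $2$ to obtain the adjacency matrix of $L(G)$, apply the Merris--Rebman--Watkins Sachs-subgraph formula so that only the cycle-free spanning Sachs subgraphs (i.e.\ perfect matchings of $L(G)$) survive mod $2$, and conclude $\operatorname{per}(B_G)\equiv M(L(G))\pmod 2$, whence Lemma~\ref{art21} applies. Your extra care about the zero diagonal and the non-sign-invariance of the permanent is a welcome tightening of the same argument rather than a different one.
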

\begin{proof}
Replace $-1$ by $1$ in $B_{G}$ and the obtained matrix is just the adjacent matrix $A(L(G))$ of $L(G)$. It can be seen that ${\rm per}(B_{G})\equiv {\rm per}(A(L(G)))({\rm mod} 2)$. According to formula $(1)$, we get that
\begin{eqnarray*}
{\rm per}(A(L(G)))=|(-1)^m\sum_H2^{k(H)}|=M(L(G))+\sum_{H'}2^{k(H')},
\end{eqnarray*}
where $H'$ denotes the Sachs subgraphs of $m$ vertices containing cycles of line graph $L(G)$.
Thus,
\begin{eqnarray*}
{\rm per}(A(L(G)))\equiv M(L(G))({\rm mod}~2).
\end{eqnarray*}
Furthermore,
\begin{eqnarray*}
{\rm per}(B_{G})\equiv M(L(G))({\rm mod}~2).
\end{eqnarray*}
By Lemma \ref{art21} and the above equation,  $G$ is $(1,2)$-choosable if $ M(L(G))$ is odd.
\end{proof}

As applications of Theorem \ref{art22}, we will show that some $(n,m)$-graphs are $(1,2)$-choosable when $m=n-1$, $n$ and $n+1$ as follows.

Obviously, a connected $(n,m)$-graph is a tree when $m=n-1$.
Chang et al. \cite{chang} proved that a tree with even number of edges is $(1,2)$-choosable.
According to Theorem \ref{art22},
we can give a new proof. To achieve it,
we first introduce some lemmas as follows.

For any graph $G$,
let $p(G)$ be the number of components of $G$
which have an even number of edges.
If $G$ is a forest, $p(G)$ and $|V(G)|$ have the same parity.
Thus, if $G$ is a tree and $|V(G)|$ is odd,
then $p(G-v)$ is even for all $v\in V(G)$.
For any non-negative $k$, denote by $(2k)!!=\frac{(2k)!}{k!\times 2^k}$.

\begin{lemma}(\cite{dong})\label{art23}
Let $T$ be a tree with
$V(T)=\{v_1,v_2,\ldots,v_n\}$,
where $n>1$ is odd. Then
\begin{equation*}
M(L(T))=\prod_{i=1}^np(T-v_i)!!.
\end{equation*}
\end{lemma}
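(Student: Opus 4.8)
The goal is to prove Lemma~\ref{art23}: for a tree $T$ on an odd number $n>1$ of vertices, $M(L(T))=\prod_{i=1}^n p(T-v_i)!!$. Since a perfect matching of $L(T)$ must pair up all $m=n-1$ edges of $T$, and $m$ is even, such a matching is precisely a partition of $E(T)$ into pairs of edges that share a common vertex. The plan is to encode this combinatorially at each vertex of $T$, so I would proceed as follows.

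First I would set up the reduction: a perfect matching $\mathcal{M}$ of $L(T)$ is a set of pairs of adjacent edges of $T$ covering $E(T)$ exactly once. For each vertex $v$, let $E(v)$ be its incident edges; each pair in $\mathcal{M}$ is ``located'' at a unique vertex (the common endpoint of the two edges — unique because $T$ is a tree, so two edges meet in at most one vertex). This gives a map assigning to each $v$ a partial matching $\mathcal{M}_v$ on the set $E(v)$, and these are subject to the global constraint that every edge $e=uv$ is covered exactly once, i.e. is matched at exactly one of its two endpoints. So I would reformulate: choosing $\mathcal{M}$ is equivalent to orienting each edge toward the endpoint where it will be matched, and then at each vertex choosing a perfect matching on the set of edges oriented into it; consistency requires that the number of edges oriented into each vertex is even.

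The key step is to count these configurations using the structure of $T$. Here I would use the hypothesis that $n$ is odd, which (as the excerpt notes, via the parity remark ``$p(G)$ and $|V(G)|$ have the same parity'' for forests) forces $p(T-v)$ to be even for every $v$. The natural approach is induction on $n$ (in steps of $2$, since only odd orders are allowed), peeling off a leaf or, better, a ``cherry'': take a vertex $u$ all of whose neighbors but one are leaves. One shows that the edges from $u$ to its leaf-neighbors must be matched at $u$ (a pendant edge can only be matched at its non-leaf end), so they pair among themselves, and the remaining edge at $u$ is either also matched at $u$ (pairing with one of the leaf-edges) or matched at its far endpoint. Tracking the resulting smaller tree(s) and the double-factorial counts of matchings on even-sized sets, the recursion should reproduce the product formula. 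The double factorial $(2k)!!$ enters exactly as the number of perfect matchings of a set of $2k$ elements, which is why it appears; $p(T-v_i)$ will turn out to be the number of edges at $v_i$ matched at $v_i$, and the parity constraint guarantees this is even.

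The main obstacle, I expect, is proving cleanly that the ``local'' quantity — the size of the partial matching forced at $v_i$ — equals $p(T-v_i)$ for an appropriate (and essentially unique up to the combinatorics) choice of orientation, and that summing over orientations collapses to a single product rather than a sum. In other words, the delicate point is that the global consistency condition is rigid enough that the count factorizes over vertices; I would handle this by showing the orientation is determined by a parity argument on each subtree hanging off an edge (an edge $e$ splits $T$ into two subtrees; $e$ is matched at the endpoint lying in the subtree with an odd number of remaining edges, which is well-defined since $m$ is even). Once this rigidity is established, the formula $M(L(T))=\prod_i p(T-v_i)!!$ follows by multiplying the independent local matching counts, completing the induction.
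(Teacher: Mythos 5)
The paper does not actually prove Lemma~\ref{art23}: it is quoted from \cite{dong} without proof, so there is no in-paper argument to compare against. Your outline is nevertheless a correct, self-contained proof, and all the essential points are present and sound: a perfect matching of $L(T)$ is a partition of $E(T)$ into pairs of edges sharing a vertex; since every edge $e=uv$ of a tree is a bridge, the edges of the component $T_u$ of $T-e$ containing $u$ can be paired only among themselves or with $e$, so $e$ is forced to be paired at $u$ exactly when $|E(T_u)|$ is odd (well defined because $|E(T_u)|+|E(T_v)|=n-2$ is odd); hence the number of edges forced to be paired at $v_i$ is exactly the number of components of $T-v_i$ with an even number of edges, i.e.\ $p(T-v_i)$, which is even because $n$ is odd; and the pairings at distinct vertices are independent, so the count factorizes into the product of the double factorials. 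My only suggestion is to drop the hedged induction-on-cherries detour in your middle paragraph: the rigidity argument you give at the end already determines the location of every edge outright and yields the formula directly, with no induction needed.
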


\begin{lemma}\label{art24}
 $\frac{(2k)!}{k!\times 2^k}=(2k-1)\times(2k-3)\times\ldots\times3\times1$, where $k$ is a non-negative integer.
\end{lemma}

By Theorem \ref{art22}, Lemmas \ref{art23} and \ref{art24},
we can get a result as follows.

\begin{theorem}(\cite{chang})\label{art25}
If $T$ is a tree with even number of edges.
Then $T$ is $(1,2)$-choosable.
\end{theorem}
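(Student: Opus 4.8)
The plan is to combine Theorem~\ref{art22} with the parity formula for $M(L(T))$ in Lemma~\ref{art23}. The only subtlety is that Lemma~\ref{art23} is stated for trees with an odd number of \emph{vertices}, i.e.\ an even number of edges, which is exactly the case of Theorem~\ref{art25}; so the hypothesis fits the lemma directly. First I would observe that a tree $T$ with an even number of edges has $n=|V(T)|$ odd, and if $n=1$ the statement is vacuous (an isolated vertex has no edges and is trivially $(1,2)$-choosable), so we may assume $n>1$ is odd and apply Lemma~\ref{art23} to write $M(L(T))=\prod_{i=1}^n p(T-v_i)!!$.

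Next I would argue that every factor $p(T-v_i)!!$ is odd. Since $T$ is a forest, $p(T)$ and $|V(T)|$ have the same parity, as recorded in the text; removing a vertex $v_i$ from $T$ leaves a forest $T-v_i$ on $n-1$ (even) vertices, so $p(T-v_i)$ is even, say $p(T-v_i)=2k_i$. By Lemma~\ref{art24}, $(2k_i)!!=(2k_i-1)(2k_i-3)\cdots 3\cdot 1$ is a product of odd integers (an empty product, equal to $1$, when $k_i=0$), hence odd. Therefore $M(L(T))$ is a product of odd numbers and is itself odd.

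Finally, having shown $M(L(T))$ is odd, I would invoke Theorem~\ref{art22}: since $T$ is connected and the number of perfect matchings of $L(T)$ is odd, $T$ is $(1,2)$-choosable, completing the proof.

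I do not expect a genuine obstacle here; the argument is a short assembly of the preceding lemmas. The one place to be careful is the parity bookkeeping --- specifically, making sure that the ``even number of edges'' hypothesis is correctly translated into ``$n$ odd,'' and then that each $T-v_i$ has an \emph{even} number of vertices so that $p(T-v_i)$ is even and the double factorial $p(T-v_i)!!$ is a product of \emph{odd} terms. Once that parity chain is pinned down, oddness of $M(L(T))$ and hence $(1,2)$-choosability follow immediately.
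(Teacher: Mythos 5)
Your proposal is correct and follows exactly the route the paper intends: the paper gives no written proof of Theorem~\ref{art25}, stating only that it follows from Theorem~\ref{art22} and Lemmas~\ref{art23} and~\ref{art24}, and your argument is precisely that assembly, with the parity chain ($m$ even $\Rightarrow$ $n$ odd $\Rightarrow$ $p(T-v_i)$ even $\Rightarrow$ each double factorial odd $\Rightarrow$ $M(L(T))$ odd) filled in correctly.
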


Next, we give a recursive expression for $M(L(G))$.
Let $e$ be any edge of $G$ with endvertices $u$ and $v$.
Let $G(u,w)$ be the graph obtained from $G-e$
by adding a new vertex $w$ and adding a new edge joining $w$ to $u$.
$G(v,w)$ is defined similarly.

\begin{lemma}(\cite{dong})\label{art26}
Let $G$ be a graph, and let $e=uv$ be an edge of $G$.
Then
\begin{eqnarray*}
M(L(G))=M(L(G(u,w)))+M(L(G(v,w))).
\end{eqnarray*}
\end{lemma}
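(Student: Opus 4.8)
The plan is to classify the perfect matchings of $L(G)$ according to how the vertex of $L(G)$ representing the distinguished edge $e=uv$ is saturated. Throughout, for an edge $a$ of a graph write $x_a$ for the corresponding vertex of its line graph. The first, purely structural, observation is that $L(G)-x_e=L(G-e)$: after deleting $x_e$ the remaining vertices are indexed by $E(G-e)$, and two of them are adjacent in $L(G)-x_e$ if and only if the corresponding edges share an endvertex in $G$, equivalently in $G-e$. The identical bookkeeping gives $L(G(u,w))-x_{uw}=L(G-e)$ and $L(G(v,w))-x_{vw}=L(G-e)$, since $G(u,w)$ and $G(v,w)$ differ from $G-e$ only by a pendant edge.

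Second, I would compare the relevant neighbourhoods. In $L(G)$ the neighbours of $x_e$ are exactly the vertices $x_{e'}$ with $e'\neq e$ incident to $u$ or to $v$; provided $G$ has no edge parallel to $e$, no $e'\neq e$ is incident to both $u$ and $v$, so this neighbourhood is the disjoint union $N_u\sqcup N_v$ with $N_u=\{x_{e'}:e'\neq e,\ u\in e'\}$ and $N_v$ defined symmetrically. Because $w$ has degree $1$ in $G(u,w)$, the neighbours of $x_{uw}$ in $L(G(u,w))$ are precisely the $x_{e'}$ with $e'$ incident to $u$ in $G-e$, i.e.\ exactly $N_u$ regarded as a subset of $V(L(G-e))$; symmetrically the neighbourhood of $x_{vw}$ in $L(G(v,w))$ is exactly $N_v$.

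Third comes the counting bijection. Every perfect matching $F$ of $L(G)$ saturates $x_e$ by an edge $x_ex_{e'}$ with $x_{e'}\in N_u\sqcup N_v$, so the perfect matchings of $L(G)$ split into the family $\mathcal{M}_u$ with $x_{e'}\in N_u$ and the family $\mathcal{M}_v$ with $x_{e'}\in N_v$, whence $M(L(G))=|\mathcal{M}_u|+|\mathcal{M}_v|$. I would then check that $F\mapsto(F\setminus\{x_ex_{e'}\})\cup\{x_{uw}x_{e'}\}$ is a bijection from $\mathcal{M}_u$ onto the set of perfect matchings of $L(G(u,w))$: it is well defined since $x_{e'}\in N_u=N_{L(G(u,w))}(x_{uw})$; the image is a perfect matching because $F\setminus\{x_ex_{e'}\}$ is a perfect matching of $L(G-e)-x_{e'}=L(G(u,w))-x_{uw}-x_{e'}$; and the inverse is the same rewriting with $x_{uw}$ replaced by $x_e$. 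Hence $|\mathcal{M}_u|=M(L(G(u,w)))$, symmetrically $|\mathcal{M}_v|=M(L(G(v,w)))$, and summing the two identities yields the claim.

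I do not expect a serious obstacle; the one thing to get right is the neighbourhood accounting together with the degenerate situations, which are all self-consistent and need only a remark. If $u$ has degree $1$ in $G$ then $N_u=\varnothing$, so $\mathcal{M}_u=\varnothing$, matched on the other side by $x_{uw}$ being an isolated vertex of $L(G(u,w))$, hence $M(L(G(u,w)))=0$; and if $m=|E(G)|$ is odd then $L(G)$, $L(G(u,w))$ and $L(G(v,w))$ all have an odd number of vertices, so all three counts are $0$. The hypothesis that no edge is parallel to $e$ is used only to make $N_u$ and $N_v$ disjoint; for multigraphs an edge parallel to $e$ would be counted on both sides and the identity would require a correction term.
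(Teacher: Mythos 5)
Your argument is correct and complete: the decomposition of the perfect matchings of $L(G)$ according to which neighbour of $x_e$ (in $N_u$ or in $N_v$) saturates it, together with the identifications $L(G)-x_e=L(G-e)=L(G(u,w))-x_{uw}=L(G(v,w))-x_{vw}$ and the matching-rewriting bijections, is exactly the standard proof of this identity, and your remarks on the degenerate cases (degree-one endvertex, odd $|E(G)|$, parallel edges) are all accurate. Note that the paper itself offers no proof here --- the lemma is quoted verbatim from the cited reference of Dong, Yan and Zhang --- so there is nothing further to compare against.
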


A vertex of degree one is called a \emph{leaf} in a graph. A \emph{unicyclic graph} is a connected graph containing exactly one cycle, the cycle denoted by $C_l$.
Obviously, a connected $(n,m)$-graph is unicyclic if and only if $n=m$. The set of unicyclic graphs with $n$ vertices is denoted by $\mathscr{U}_{n}$.
For any graph $U\in \mathscr{U}_{n}$ with $V(C_l)=\{v_1,\ldots,v_l\}\subseteq V(U)$, $U$ can be viewed as identifying $v_i$ with any leaf of each of $k_i$ trees for $i\in\{1,\ldots,l\}$, where $k_i$ is a non-negative integer.
Denote by $k_i^{0}(\geq2)$ and $k_{i}^{1}(\geq3)$ respectively the number of trees with even number of edges and odd number of edges in the $k_i$ trees. Let $s=\sum_{i=1}^l k_i^0$, $\mathscr{U}_{1}$ be the subset of $\mathscr{U}_{n}$ such that $s$ is odd and $\mathscr{U}_{2}$ be the subset of $\mathscr{U}_{n}$ such that $s$ is even.
We denote the $s$ trees with even number of edges as $T_1,T_2,\ldots,T_s$,
respectively. As we will consider the number of perfect matchings of line graphs, assume that $n$ is even and all the notation in this paragraph is followed in Theorem \ref{art28} and Lemma \ref{art27}.

\begin{theorem}\label{art28}
For any graph $U\in \mathscr{U}_{1}$, $U$ is $(1,2)$-choosable.
\end{theorem}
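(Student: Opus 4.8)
The plan is to establish ${\rm per}(B_U)\neq 0$ and then invoke Lemma~\ref{art21}. It is worth noting at the outset that Theorem~\ref{art22} gives no shortcut here: if $e=uv$ is an edge of the cycle $C_l$, then $U-e$ is a tree on $n$ vertices, so $U(u,w)$ and $U(v,w)$ are trees on $n+1$ vertices, and since $n$ is even, $n+1$ is odd; by Lemmas~\ref{art23} and~\ref{art24} the numbers $M(L(U(u,w)))$ and $M(L(U(v,w)))$ are products of odd double factorials and hence odd, so Lemma~\ref{art26} forces $M(L(U))$ to be even for \emph{every} $U\in\mathscr{U}_n$ with $n$ even. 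Thus the parity of $M(L(U))$ is uninformative, and one must analyse the signed permanent ${\rm per}(B_U)$ itself.

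Since replacing each $-1$ in $B_U$ by $1$ yields $A(L(U))$, expanding ${\rm per}(B_U)$ over permutations of $E(U)$ and grouping the nonzero terms by their cycle structure expresses ${\rm per}(B_U)$ as a signed sum over the Sachs subgraphs $H$ of $L(U)$: a component of $H$ that is an edge contributes a fixed sign $\pm1$, and a component that is a cycle $C$ contributes $p^{+}(C)+p^{-}(C)$, where $p^{\pm}(C)$ are the two products obtained by traversing $C$ in its two directions. First I would use the recursion of Lemma~\ref{art26}, in the bookkeeping form of Lemma~\ref{art27}, to strip the pendant trees: each leaf edge together with its unique admissible neighbour is forced into an edge-component of fixed nonzero sign, so a pendant tree with an odd number of edges collapses to such a fixed factor, while each of the $s$ even-edge trees $T_1,\dots,T_s$ leaves behind a genuine binary choice; after this reduction the surviving structure is governed by $L(C_l)$ together with the data recorded in Lemma~\ref{art27}.

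The heart of the argument is to show that, when $s$ is odd, these signed contributions do not all cancel. A cycle-component of $H$ lying inside the clique of edges incident with a single vertex of degree $\ge 3$ has all of its successive shared vertices equal, so $p^{+}=p^{-}$ and it contributes $\pm2\neq0$; the only cycle-component that can cancel is the cycle $C_l$ of $U$ itself, for which $p^{+}=(-1)^{l}p^{-}$. Using Lemma~\ref{art27} one checks that the Sachs subgraphs obtained by fixing a perfect matching in each reduced tree line graph and in $L(C_l)$ form a family all of whose contributions carry one and the same sign, this family being nonempty precisely because $s$ is odd, and that no other Sachs subgraph can annihilate it; combined with the fact that $L(U)$ has a perfect matching (as $U$ is connected with an even number of edges), this forces ${\rm per}(B_U)\neq 0$, and Lemma~\ref{art21} completes the proof.

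The main obstacle is exactly this cancellation: for some unicyclic graphs ${\rm per}(B_U)$ genuinely vanishes (for instance when the reduction exposes a copy of $B_{C_3}$), so the hypothesis ``$s$ odd'' must be used in an essential way, and there is no mod-$2$ reduction to fall back on. Keeping the signs straight for the Sachs subgraphs that simultaneously involve $C_l$, the local cliques at high-degree vertices, and the edge-components inside the pendant trees is the delicate part of the proof, and it is precisely this bookkeeping that the recursive identity of Lemma~\ref{art26}, packaged as Lemma~\ref{art27}, is designed to control.
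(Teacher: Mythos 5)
Your opening observation is correct, and it is worth stating plainly that it undercuts the proof the paper itself gives. Applying Lemma~\ref{art26} to an edge $e=uv$ of the cycle $C_l$ turns $U$ into two trees $U(u,w)$, $U(v,w)$ with $n$ edges each, so by Lemmas~\ref{art23} and~\ref{art24} the quantity $M(L(U))$ is a sum of two odd numbers, hence even, for \emph{every} unicyclic graph with an even number of edges. The paper instead asserts the additive identity $M(L(U))=M(L(T_1))+\cdots+M(L(T_s))+M(L(U'))$, but that is not what Lemma~\ref{art26} yields: applying the lemma at the attachment edge $e_i=v_ix_i$ of $T_i$ gives $M(L(U))=M(L(U(v_i,w)))+M(L(U(x_i,w)))$, where the first term vanishes (it contains the component $T_i-v_i$, which has an odd number of edges) and the second factors as $M(L(T_i))\cdot M(L(U_0))$ with $U_0$ the smaller unicyclic graph left after deleting $T_i-v_i$; iterating produces a \emph{product}, not a sum. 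A concrete check: for $U$ equal to $C_4$ with a copy of $P_3$ hung at one vertex, $s=1$ so $U\in\mathscr{U}_1$, yet $M(L(U))=2$. So you are right that ${\rm per}(B_U)\equiv M(L(U))\equiv 0\pmod 2$ and Theorem~\ref{art22} cannot close the argument for any graph in $\mathscr{U}_1$.

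The genuine gap is in your replacement argument. The entire burden of the proof now rests on the claim that the signed contributions of the Sachs subgraphs of $L(U)$ to ${\rm per}(B_U)$ do not cancel when $s$ is odd, and this is precisely the step you do not carry out: ``one checks that \dots no other Sachs subgraph can annihilate it'' is an assertion, not an argument. Moreover, the tool you propose for the bookkeeping, Lemma~\ref{art27}, is a parity statement about $M(L(U))\bmod 2$ and carries no sign information, so it cannot control a signed expansion; and your classification of the cycle-components of a Sachs subgraph of $L(U)$ (triangles inside the clique at a single vertex versus the image of $C_l$) is incomplete, since cycles of a line graph may pass through several of the vertex-cliques. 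As written, the proposal correctly diagnoses why the mod-$2$ route (the paper's route) fails, but it does not supply a proof of the theorem; settling the statement would require actually establishing ${\rm per}(B_U)\neq 0$, which remains open in your write-up.
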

\begin{proof} By Lemma \ref{art26},
\begin{eqnarray*}
M(L(U))=M(L(T_1))+M(L(T_2))+\ldots+M(L(T_s))+M(L(U')).
\end{eqnarray*}
where $e_i$ denotes the edge incident with $T_i$ and $v_i$, $U'=U-E(T_1-e_1)-E(T_2-e_2)-\ldots-E(T_s-e_s)$.

According to the definition of $\mathscr{U}_{1}$,
 $s$ and $m(T_i-e_i)$ are odd, $m(U)$ is even.
So
$m(U')=m(U)-m(T_1-e_1)-m(T_2-e_2)-\ldots-m(T_s-e_s)$ is odd,
then $M(L(U'))=0$.
By Theorem \ref{art25}, $M(L(T_i))$ is odd as $m(T_i)$ is even.
From the above argument and equation,
we obtain that $M(L(U))$ is odd. Then $U\in \mathscr{U}_{1}$ is $(1,2)$-choosable
by Theorem \ref{art22}.
\end{proof}

\begin{lemma}\label{art27}
Let $U\in \mathscr{U}_{2}$. Then $M(L(U))$ is even.
\end{lemma}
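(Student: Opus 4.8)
The plan is to argue along the same lines as the proof of Theorem~\ref{art28}; the one essential difference is that when $s$ is even the graph $U'$ obtained by peeling the even trees off the cycle has an \emph{even} number of edges, so one can no longer conclude $M(L(U'))=0$ for free and an additional reduction is required.

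First, exactly as in the proof of Theorem~\ref{art28}, apply Lemma~\ref{art26} along the edges $e_1,\dots,e_s$ joining $T_1,\dots,T_s$ to the cycle to obtain
\[
M(L(U))=M(L(T_1))+M(L(T_2))+\cdots+M(L(T_s))+M(L(U')),
\]
where $U'=U-E(T_1-e_1)-\cdots-E(T_s-e_s)$. By Theorem~\ref{art25} each $M(L(T_i))$ is odd, since $m(T_i)$ is even; and since $s$ is even, $\sum_{i=1}^{s}M(L(T_i))$ is even. Hence $M(L(U))\equiv M(L(U'))~({\rm mod}~2)$, so it suffices to prove that $M(L(U'))$ is even.

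Next examine $U'$. Each $T_i$ has been reduced to the single pendant edge $e_i$ at $v_i$, so after deleting the isolated vertices that this reduction produces (which do not change the line graph), $U'$ is a connected unicyclic graph whose unique cycle is still $C_l$, and its number of edges is
\[
m(U')=n-\sum_{i=1}^{s}\bigl(m(T_i)-1\bigr)=n-\sum_{i=1}^{s}m(T_i)+s,
\]
which is even because $n$, each $m(T_i)$, and $s$ are all even. Now apply Lemma~\ref{art26} once more, this time to an edge $e=v_1v_2$ of the cycle $C_l$ of $U'$:
\[
M(L(U'))=M(L(U'(v_1,w)))+M(L(U'(v_2,w))).
\]
Since $e$ lies on the only cycle of $U'$, each of $U'(v_1,w)$ and $U'(v_2,w)$ is a tree, and each has $m(U')$ edges, an even number. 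By Theorem~\ref{art25} (equivalently, by Lemmas~\ref{art23} and \ref{art24}) the line graph of a tree with an even number of edges has an odd number of perfect matchings, so $M(L(U'(v_1,w)))$ and $M(L(U'(v_2,w)))$ are both odd, whence their sum $M(L(U'))$ is even. Together with the reduction above this gives $M(L(U))\equiv 0~({\rm mod}~2)$, proving the lemma.

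The only place where the argument genuinely differs from the proof of Theorem~\ref{art28} is the treatment of $M(L(U'))$: there the assumption that $s$ is odd makes $m(U')$ odd, so $L(U')$ has an odd number of vertices and $M(L(U'))=0$ outright, whereas here $m(U')$ is even and one has to break the cycle of $U'$ and fall back on the tree case. I expect the point needing the most care to be exactly this: verifying that deleting a single edge of the cycle $C_l$ of $U'$ leaves a tree (no cycle survives, none is created) with an even number of edges, which is precisely what the parity computation for $m(U')$ provides.
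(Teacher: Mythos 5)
Your proof is correct and follows essentially the same route as the paper's: both peel off the even trees via Lemma~\ref{art26}, note that $m(U')$ is even, and then apply Lemma~\ref{art26} a second time to an edge of the cycle of $U'$ to reduce to two trees with an even number of edges, each contributing an odd count by Theorem~\ref{art25}. The only cosmetic difference is that the paper carries out both applications of Lemma~\ref{art26} in a single displayed chain of equalities rather than isolating $M(L(U'))$ as a separate step.
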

\begin{proof}
By Lemma \ref{art26},
\begin{eqnarray*}
M(L(U))&=&M(L(T_1))+\ldots+M(L(T_s))+M(L(U'))\\
&=&M(L(T_1))+\ldots+M(L(T_s))+M(L(U'(u,w)))+M(L(U'(v,w))),
\end{eqnarray*}
where $e_i$ denotes the edge incident with $T_i$ and $v_i$,
$U'=U-E(T_1-e_1)-E(T_2-e_2)-\ldots-E(T_s-e_s)$, $e=uv$ denotes any edge of $C_l$ in $U'$ and $U'(x,w)$ is the graph obtained from $G-e$
by adding a new vertex $w$ and adding a new edge $wx$ for $x\in\{u,v\}$.

By the above definition of $\mathscr{U}_{2}$,
$m(T_i-e_i)$ is odd, $s$ and $m(U)$ are even.
So, $m(U')=m(U)-m(T_1-e_1)-m(T_2-e_2)-\ldots-m(T_s-e_s)$ is even.
Hence $m(U'(u,w))$ and $m(U'(v,w))$ are even, and $U'(u,w),U'(v,w)$ are trees.
By Theorem \ref{art25}, $M(L(U'(u,w)))$ and $M(L(U'(v,w)))$ are odd.
Since $m(T_i)$ is even, by Theorem \ref{art25}, we have that $M(L(T_i))$ is odd.
From the above argument and equation,
we obtain that $M(L(U))$ is even.
\end{proof}

A connected $(n,m)$-graph containing two or three cycles is called a \emph{bicyclic graph} if $m=n+1$. Let $\mathscr{B}_{n}$ be the set of all bicyclic graphs with $n$ vertices.
By the structure of bicyclic graphs, it is known that $\mathscr{B}_{n}$
consists of three types of graphs:
the first type, denoted by $\mathscr{B}_n^1(p,q)$,
is the set of graphs each of which contains $B_1(p,q)$
as a vertex-induced subgraph;
the second type, denoted by $\mathscr{B}_n^2(p,q,r)$,
is the set of graphs each of which contains $B_2(p,q,r)$
as a vertex-induced subgraph;
the third type, denoted by $\mathscr{B}_n^3(p,q,r)$,
is the set of graphs each of which contains $B_3(p,q,r)$
as a vertex-induced subgraph (see Figure \ref{fig1}).
Obviously, $\mathscr{B}_{n}=\mathscr{B}_n^1(p,q)
\cup\mathscr{B}_n^2(p,q,r)\cup\mathscr{B}_n^3(p,q,r)$.
\begin{figure}[htbp]
\begin{center}
\includegraphics[scale=1.2]{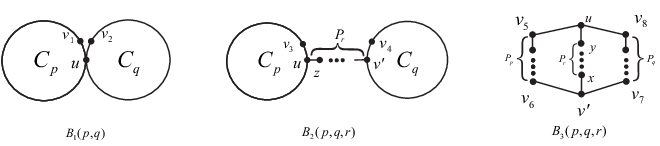}
\caption{\label{fig1}\small
{Bicyclic graphs $B_1(p,q)$, $B_2(p,q,r)$ and $B_3(p,q,r)$}}
\end{center}
\end{figure}

Let $C_p$ and $C_q$ denote the induced cycles of any bicyclic graph in $\mathscr{B}_n^1(p,q)$. For any graph $B\in \mathscr{B}_n^1(p,q)$ with $V(B_1(p,q))=\{v_0, v_1,\ldots, v_{p+q-1}\}\subseteq V(B)$, $B$ can be viewed as identifying $v_i$ with any leaf of each of $k_i$ trees for $i\in\{1,\ldots,p+q-1\}$,
where $k_i$ is a non-negative integer. Denote by $k_i^{0}(\geq2)$ and $k_{i}^{1}(\geq3)$ respectively the number of trees with even number of edges and odd number of edges in the $k_i$ trees. Let $s_1=\sum_{i} k_i^0$, where the summation takes over all vertices of $C_p$ or $C_q$,
$\mathscr{B}_{1}\subset \mathscr{B}_n^1(p,q)$ such that each graph in $\mathscr{B}_{1}$ contains even number of edges and $s_1$ is odd or even. As we will consider the number of perfect matchings of line graphs, assume that $n$ is odd and all the notation in this paragraph and Figure 1 is followed in Lemmas~\ref{art29}, \ref{art210}, \ref{art211}, \ref{art212} and Theorem~\ref{art213}.

\begin{lemma}\label{art29}
Let $B\in \mathscr{B}_{1}$ be a bicyclic graph. Then $B$ is $(1,2)$-choosable.
\end{lemma}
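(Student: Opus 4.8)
The plan is to apply Theorem~\ref{art22}: it suffices to prove that $M(L(B))$ is odd for every $B\in\mathscr{B}_1$. Recall that $B$ is obtained from the figure-eight $B_1(p,q)$---the two induced cycles $C_p$ and $C_q$ meeting at the single vertex $v_0$---by identifying certain of its vertices with leaves of rooted trees, and that $T_1,\dots,T_{s_1}$ are the attached trees with an even number of edges; let $e_j$ be the edge of $T_j$ incident with the cycle vertex it hangs from. Stripping $T_1,\dots,T_{s_1}$ off one at a time with Lemma~\ref{art26}, exactly as in the proof of Theorem~\ref{art28}, yields
\begin{equation*}
M(L(B))=\sum_{j=1}^{s_1}M(L(T_j))+M(L(B')),\qquad B'=B-\bigcup_{j=1}^{s_1}E(T_j-e_j),
\end{equation*}
where $B'$ is $B_1(p,q)$ carrying $s_1$ pendant edges, the remaining odd-edge trees, and some isolated vertices. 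By Theorem~\ref{art25} each $M(L(T_j))$ is odd, and since $m(B)$ and every $m(T_j)$ are even, one computes $m(B')\equiv s_1\pmod 2$.

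If $s_1$ is odd, then $m(B')$ is odd, so $L(B')$ has an odd number of vertices, $M(L(B'))=0$, and hence $M(L(B))\equiv s_1\equiv 1\pmod 2$; Theorem~\ref{art22} finishes this case. The real work is the case $s_1$ even, where $m(B')$ is even and $M(L(B'))$ need not vanish. Here I would first strip the remaining odd-edge trees from $B'$ using Lemma~\ref{art26} and Theorem~\ref{art25}: such a tree $T$ has an odd ($\geq 3$) number of edges, so $M(L(T))=0$ while $M(L(T-\ell))$ is odd for its root leaf $\ell$, and therefore modulo $2$ the tree collapses to a single pendant edge. This reduces the problem to showing $M(L(\widetilde{B}))$ is odd, where $\widetilde{B}$ is $B_1(p,q)$ decorated with a collection of pendant edges and $m(\widetilde{B})$ is even. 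At that point I would break an edge of $C_p$ via Lemma~\ref{art26}, splitting $\widetilde{B}$ into two unicyclic graphs with cycle $C_q$ and the same (even) number of edges, and then determine the parity of $M(L(\cdot))$ of each via Theorem~\ref{art28} and Lemma~\ref{art27}.

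I expect this last case to be the main obstacle. After breaking a cycle one must track how the pendant edges are distributed between $C_p$ and $C_q$ and, in particular, how many sit at the cut vertex $v_0$, because whether each of the two resulting unicyclic graphs lies in $\mathscr{U}_1$ or in $\mathscr{U}_2$ depends on the parities of $p$, $q$, and those counts. So the argument should branch into several subcases; the crucial point in each is the evaluation modulo $2$ of $M(L(\cdot))$ for a figure-eight decorated with pendant edges, and the delicate step is to show that exactly one of the two unicyclic pieces contributes an odd value, so that their sum---and hence $M(L(B))$---is odd.
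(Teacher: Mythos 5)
Your proposal correctly identifies the target (show $M(L(B))$ is odd and invoke Theorem~\ref{art22}) and your treatment of the case ``$s_1$ odd'' is sound: after stripping the even-edge trees via Lemma~\ref{art26}, the residual bicyclic graph $B'$ has an odd number of edges, so $L(B')$ has an odd number of vertices and $M(L(B'))=0$, leaving $M(L(B))\equiv s_1\equiv 1\pmod 2$. But the case ``$s_1$ even'' --- which you yourself flag as ``the main obstacle'' and ``the delicate step'' --- is exactly where the content of the lemma lives, and you do not carry it out: you stop at ``show that exactly one of the two unicyclic pieces contributes an odd value'' without proving it. That is a genuine gap, not a routine verification, because it requires locating the two pieces in $\mathscr{U}_1$ versus $\mathscr{U}_2$, i.e.\ computing their $s$-parameters.

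The missing idea is the following, and it makes all of your preliminary stripping unnecessary. Apply Lemma~\ref{art26} once, directly to the cycle edge $e=uv_1$ of $C_p$, so that $M(L(B))=M(L(B(u,w)))+M(L(B(v_1,w)))$. Both $B(u,w)$ and $B(v_1,w)$ are unicyclic with cycle $C_q$ and an even number of edges (since $m(B)$ is even). The opened-up cycle $C_p$, together with all trees hanging on its vertices and the new pendant edge, becomes a single tree attached at $u$; its edge count in $B(u,w)$ and in $B(v_1,w)$ differs by exactly one (the pendant $w$ is attached at $u$ in one and at the far end $v_1$ in the other, but in $B(u,w)$ the pendant edge $uw$ is a separate odd tree at $u$ while the opened path contributes $m(B')-$parity, whereas in $B(v_1,w)$ the pendant is absorbed into the opened path). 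Consequently the respective parameters $x$ and $y$ satisfy $\{x,y\}=\{s_1,s_1+1\}$, with which of the two equals $s_1$ determined by the parity of $m(B')$ (the $C_p$-side subgraph). Hence, for every combination of the parities of $s_1$ and $m(B')$, exactly one of $B(u,w),B(v_1,w)$ lies in $\mathscr{U}_1$ and the other in $\mathscr{U}_2$, so by Theorem~\ref{art28} and Lemma~\ref{art27} one summand is odd and the other even, and $M(L(B))$ is odd. This four-way case check is the substance of the paper's proof and is precisely what your sketch defers; without it the lemma is not established.
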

\begin{proof} Let $e=uv_{1}$ as in Figure \ref{fig1}. By Lemma \ref{art26},
\begin{eqnarray*}
M(L(B))=M(L(B(u,w)))+M(L(B(v_{1},w))).
\end{eqnarray*}

Clearly, $B(u,w), B(v_{1},w)\in \mathscr{U}_{n}$.
According to the definition of $\mathscr{B}_{1}$, $m(B)$ is even.
Then the number of edges of
$B(u,w)$ and $B(v_{1},w)$ are even.
Set $s=x$ for $B(u,w)$ and $s=y$ for $B(v_{1},w)$.

Without loss of generality,
assume $s_1$ is odd or even,
where the summation takes over all vertices of $C_q$.
Let $B'$ be obtained from $B$ by deleting the edges in
$C_{q}$ and the trees hanging on it.
Because $m(B')$ is either an even or an odd number,
so we should consider two cases as follows.

\textbf{Case 1.} $m(B')$ is even.

If $s_1$ is odd and $m(B')$ is even,
then $x=s_1$ and $y=s_1+1$.
Then  $M(L(B(u,w)))$ is odd by Theorem \ref{art28}.
According to Lemma \ref{art27}, $M(L(B(v_{1},w)))$ is even.
It follows that $M(L(B))$ is odd by the above equation. Therefore, $B\in \mathscr{B}_{1}$ is $(1,2)$-choosable by Theorem $\ref{art22}$.

If $s_1$ is even and $m(B')$ is even, then $x=s_1$ and $y=s_1+1$.
Then $M(L(B(u,w)))$ is even according to Lemma \ref{art27}. By Theorem \ref{art28}, $M(L(B(v_{1},w)))$ is odd. Therefore,  $M(L(B))$ is odd by the above equation and  $B\in \mathscr{B}_{1}$ is $(1,2)$-choosable by Theorem \ref{art22}.

\textbf{Case 2:} $m(B')$ is odd.

If $s_1$ is odd and $m(B')$ is odd,
then $x=s_1+1$ and $y=s_1$.
Hence, $M(L(B(u,w)))$ is even according to Lemma \ref{art27}.
By Theorem \ref{art28}, $M(L(B(v_{1},w)))$ is odd. Then $M(L(B))$ is odd by the above equation and $B\in \mathscr{B}_{1}$ is $(1,2)$-choosable by Theorem \ref{art22}.

If $s_1$ is even and $m(B')$ is odd,
then $x=s_1+1$ and $y=s_1$.
Hence,  $M(L(B(u,w)))$ is odd by Theorem \ref{art28}.
According to Lemma \ref{art27}, $M(L(B(v_{1},w)))$ is even. Therefore,  $M(L(B))$ is odd by the above equation and $B\in \mathscr{B}_{1}$ is $(1,2)$-choosable by Theorem \ref{art22}.
\end{proof}

Let $C_p$ and $C_q$ denote the induced cycles of any bicyclic graph in $\mathscr{B}_n^2(p,q)$.
Let $\mathscr{B}_{2}\subset \mathscr{B}_n^2(p,q,r)$
be the set of all graphs obtained
by identifying every vertex $v_i$ of $B_2(p,q,2)$ with
any leaf of each of the $k_i$ trees,
where $k_i=k_i^0+k_i^1$
(the number of trees with even number of edges $(\geq 2)$ denoted by $k_i^0$,
the number of trees with odd number of edges $(\geq 3)$ denoted by $k_i^1$)
and such that $\sum_{i} k_i^0=s_1$ is odd,
where the summation takes over all vertices of $C_p$
if $m(B(u,w))$ is even or $C_q$ if $m(B(u,w))$ is odd.
Because we need to consider the number of perfect matchings of line graph,
$n$ is assumed to be odd.

\begin{lemma}\label{art210}
Let $B\in \mathscr{B}_{2}$ be a bicyclic graph. Then $B$ is $(1,2)$-choosable.
\end{lemma}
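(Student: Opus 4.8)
The proof will run exactly parallel to that of Lemma~\ref{art29}. First I would take the edge $e=uv$ to be an edge of the path of length $2$ joining the two branch vertices of $B_2(p,q,2)$ (the edge displayed in Figure~\ref{fig1}), and apply Lemma~\ref{art26} to obtain
\begin{eqnarray*}
M(L(B))=M(L(B(u,w)))+M(L(B(v,w))).
\end{eqnarray*}
Deleting such an edge destroys exactly two of the three cycles of the theta-type graph underlying $B$ and leaves a single cycle with one pendant edge attached, so both $B(u,w)$ and $B(v,w)$ are unicyclic graphs on $n+1$ vertices; since $n$ is odd, $n+1$ is even, and hence Theorem~\ref{art28} and Lemma~\ref{art27} are applicable to each of them.

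The heart of the argument is to control, for each of $B(u,w)$ and $B(v,w)$, the parameter $s$ that decides membership in $\mathscr{U}_{1}$ versus $\mathscr{U}_{2}$, namely the number of hanging trees with an even number of edges counted over the vertices of the surviving cycle. Because $B$ has an even number of edges, so do $B(u,w)$ and $B(v,w)$. Reattaching the vertex $w$ as a single pendant edge contributes a tree with an odd number of edges on one side, whereas on the other side the remaining part of the deleted $2$-path, together with whatever trees had been hung on the internal vertex, is absorbed as one hanging tree whose edge-count has the opposite parity; a short count of edge numbers — split, as in Cases~1 and~2 of the proof of Lemma~\ref{art29}, according to the parity of $m(B')$, where $B'$ is $B$ with one of the two cycles and its attached trees removed — shows that the value of $s$ for $B(u,w)$ and the value of $s$ for $B(v,w)$ differ by exactly $1$. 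Since the hypothesis defining $\mathscr{B}_{2}$ guarantees that $s_1$ is odd, it follows in every case that exactly one of $B(u,w),B(v,w)$ lies in $\mathscr{U}_{1}$ and the other in $\mathscr{U}_{2}$.

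Applying Theorem~\ref{art28} to the one in $\mathscr{U}_{1}$ and Lemma~\ref{art27} to the one in $\mathscr{U}_{2}$, one summand of the displayed identity is odd and the other is even, so $M(L(B))$ is odd; by Theorem~\ref{art22}, $B$ is $(1,2)$-choosable. The only delicate point is the parity bookkeeping in the middle step: one must check carefully that the $2$-path of $B_2(p,q,2)$ is split so that $s$ changes by precisely one when passing between $B(u,w)$ and $B(v,w)$, and that, under the standing hypothesis that $s_1$ is odd, no case produces two summands of the same parity. Once this is verified, the remaining steps are immediate invocations of Lemmas~\ref{art26} and~\ref{art27} and Theorems~\ref{art22} and~\ref{art28}.
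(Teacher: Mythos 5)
Your opening move (applying Lemma~\ref{art26} to the connecting edge of $B_2(p,q,2)$) coincides with the paper's, but the structural picture you build on it is wrong, and the rest of the argument collapses with it. $B_2(p,q,r)$ is not a theta graph: it consists of two \emph{vertex-disjoint} cycles $C_p$ and $C_q$ joined by a path (for $r=2$, by the single edge $uv'$); the theta-type base is $B_3(p,q,r)$. Consequently $uv'$ is a cut edge, $B-e$ is disconnected, and each of $B(u,w)$, $B(v',w)$ is a disjoint union of \emph{two} unicyclic components (one containing $C_p$, one containing $C_q$), not ``a single cycle with one pendant edge attached.'' In particular these graphs do not lie in $\mathscr{U}_{n}$ in the sense required: Theorem~\ref{art28} and Lemma~\ref{art27} are stated for connected unicyclic graphs with one distinguished cycle and a single parameter $s$, so neither applies directly, and your central claim that ``the value of $s$ for $B(u,w)$ and for $B(v,w)$ differ by exactly one'' has no meaning here --- each graph has two surviving cycles, each with its own $s$-count, and $M(L(\cdot))$ factors as a product over the two components. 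The bookkeeping you import from Lemma~\ref{art29} works there precisely because in $B_1(p,q)$ the two cycles share a vertex, so deleting a cycle edge leaves a connected unicyclic graph; it does not transfer to $B_2$.

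The paper's own proof also reaches a different parity split: in each of its cases one of the summands $M(L(B(u,w)))$, $M(L(B(v',w)))$ is shown to vanish outright (a line graph on an odd number of vertices has no perfect matching) while the other is odd by Theorem~\ref{art28}, whereas you aim for an odd-plus-even split. To repair your argument you would have to work componentwise: write each of $M(L(B(u,w)))$ and $M(L(B(v',w)))$ as a product over its two unicyclic components, note that the edge counts of the two sides of the bridge sum to $m(B)-1$, which is odd, so exactly one of the two graphs acquires a component with an odd number of edges and hence contributes $0$; then use the hypothesis on $s_1$ to argue that the other product is odd. As written, the proposal does not establish the parities it claims.
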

\begin{proof}
Let $e=uv'$. By Lemma \ref{art26},
\begin{eqnarray*}
M(L(B))=M(L(B(u,w)))+M(L(B(v',w))).
\end{eqnarray*}

Obviously, $B(u,w), B(v',w)\in \mathscr{U}_{n}$.
By the definition of $\mathscr{B}_{2}$, $m(B)$ is even.
We consider two cases as follows.

\textbf{Case 1:} $m(B(u,w))$ is even and $m(B(v',w))$ is odd.

By the above definition, we have $s_1$ is odd
and the summation takes over all vertices of $C_p$. Then $M(L(B(v',w)))=0$ and $M(L(B(u,w)))$ is odd by Theorem \ref{art28}. It follows that $M(L(B))$ is odd by the above equation  and  $B\in \mathscr{B}_{2}$ is $(1,2)$-choosable by Theorem \ref{art22}.

\textbf{Case 2:}  $m(B(u,w))$ is odd and $m(B(v',w))$ is even.

By the above definition, we have $s_1$ is odd
and the summation takes over all vertices of $C_q$. Then $M(L(B(u,w)))=0$ and  $M(L(B(v',w)))$ is odd by Theorem \ref{art28}. It follows that  $M(L(B))$ is odd by the above equation and  $B\in \mathscr{B}_{2}$ is $(1,2)$-choosable by Theorem \ref{art22}.
\end{proof}

Let $C_p$ and $C_q$ denote the induced cycle of any bicyclic graph in $\mathscr{B}_n^2(p,q)$. Let $\mathscr{B}_{3}\subset \mathscr{B}_n^2(p,q,r)$
be the set of all graphs obtained
by identifying every vertex $v_i$ of $B_2(p,q,r)(r>2)$ and
any leaf of each of the $k_i$ trees,
where $k_i=k_i^0+k_i^1$
(the number of trees with even number of edges $(\geq 2)$ is denoted by $k_i^0$,
the number of trees with odd number of edges $(\geq 3)$ is denoted by $k_i^1$)
and $\sum_{i} k_i^0=s_1$,
where the summation takes over all vertices of $C_p$
if $m(B(u,w))$ is even, $s$ is odd or
$C_q$ if $m(B(u,w))$ is odd, $s$ is even.
Because we need to consider the number of perfect matches of line graph,
$n$ is assumed to be odd.

\begin{lemma}\label{art211}
Let $B\in \mathscr{B}_{3}$ be a bicyclic graph. Then $B$ is $(1,2)$-choosable.
\end{lemma}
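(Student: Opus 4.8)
The plan is to argue exactly as in the proofs of Lemmas \ref{art29} and \ref{art210}: reduce the bicyclic graph $B$ to a sum of two unicyclic graphs by peeling one edge off the path of length $r$ joining the two branch vertices of $B_2(p,q,r)$, and then read off the parity of $M(L(B))$ from the parities of the two unicyclic pieces, closing with Theorem \ref{art22}.

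First I would take $e=uv'$ as indicated in Figure \ref{fig1}, and apply Lemma \ref{art26} to get
\[
M(L(B))=M(L(B(u,w)))+M(L(B(v',w))).
\]
Since $r>2$, deleting $e$ from the base $B_2(p,q,r)$ destroys exactly one of its cycles and leaves a single cycle with a pendant path attached at the far branch vertex; reattaching $e$ as a new pendant edge at $u$, respectively at $v'$, therefore yields unicyclic graphs, so $B(u,w),B(v',w)\in\mathscr{U}_{n}$. As $n$ is odd and $B$ is bicyclic, $m(B)=n+1$ is even.

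Next I would determine, for each of $B(u,w)$ and $B(v',w)$, the parameter $s$ equal to the number of its pendant trees having an even number of edges. The set $\mathscr{B}_3$ is defined precisely so that, according to whether the surviving cycle is $C_p$ (with $s$ odd) or $C_q$ (with $s$ even), exactly one of $B(u,w),B(v',w)$ lies in $\mathscr{U}_1$ and the other in $\mathscr{U}_2$; then one of $M(L(B(u,w))),M(L(B(v',w)))$ is odd by the proof of Theorem \ref{art28} and the other is even by Lemma \ref{art27}. In either case the displayed identity shows that $M(L(B))$ is odd, so $B$ is $(1,2)$-choosable by Theorem \ref{art22}.

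The main obstacle is the verification underlying the previous sentence: for each of the two cases built into the definition of $\mathscr{B}_3$ --- and, just as the value $r=2$ versus $r>2$ separates Lemmas \ref{art210} and \ref{art211}, with attention to the parity of $r$, which governs whether the path freed up by deleting $e$ is an even-edge or odd-edge pendant tree --- one must check that the operation $B\mapsto B(\,\cdot\,,w)$ changes the number of even-edge pendant trees on the surviving cycle by exactly the amount needed to flip the parity of $s$ between the two pieces, so that precisely one of them falls in $\mathscr{U}_1$. This is routine once the effect of detaching and reattaching $e$ on the pendant path and on the surviving cycle is written out, in the same spirit as the case analyses in the proofs of Lemmas \ref{art29} and \ref{art210}.
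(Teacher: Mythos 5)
Your top-level strategy is the same as the paper's: apply Lemma \ref{art26} to split $B$ along one edge of the connecting path into two unicyclic pieces, determine the parity of $M(L(\cdot))$ for each, and close with Theorem \ref{art22}. But there are two concrete problems. First, for $r>2$ the vertices $u$ and $v'$ are not adjacent in $B_2(p,q,r)$, so $e=uv'$ is not an edge of $B$; the paper deletes $e=uz$, where $z$ is the neighbour of $u$ on the path $P_r$ (your choice $e=uv'$ is the one used for the $r=2$ case, Lemma \ref{art210}). Your prose does say ``one edge off the path,'' so the intent is recoverable, but the proof as written operates on a nonexistent edge. Second, and more seriously, you assert the wrong parity mechanism and then defer its verification. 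You claim that exactly one of the two pieces lands in $\mathscr{U}_1$ and the other in $\mathscr{U}_2$, so that one contribution is odd by Theorem \ref{art28} and the other even by Lemma \ref{art27}; that is the pattern of the proofs for $\mathscr{B}_1$ and $\mathscr{B}_4$ (Lemmas \ref{art29} and \ref{art212}). The paper's proof of Lemma \ref{art211} instead argues, exactly as for $\mathscr{B}_2$, that one of the two pieces has $M(L(\cdot))=0$ outright (its line graph has an odd number of vertices, hence no perfect matching) while the other has an even number of edges, satisfies the $s$-odd condition, and so has odd $M(L(\cdot))$ by Theorem \ref{art28}; Lemma \ref{art27} is never invoked.

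The gap is that the sentence you label ``routine'' is the entire content of the lemma. The class $\mathscr{B}_3$ is defined by an ad hoc parity condition on $s_1=\sum_i k_i^0$ that is keyed to which of $C_p$, $C_q$ carries the count and to the parity of the relevant piece; the lemma is true only because that definition is engineered to make exactly one of the two pieces contribute an odd number. Without unpacking how deleting $uz$ and reattaching the freed path (whose edge-parity depends on $r$) redistributes the even-edge pendant trees between the two surviving cycles, you have not shown that your claimed $\mathscr{U}_1$/$\mathscr{U}_2$ split actually occurs for every $B\in\mathscr{B}_3$ — and indeed the paper's own accounting suggests it does not occur in the form you describe. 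You need to carry out that case analysis explicitly, as the paper does (however tersely) in its Cases 1 and 2.
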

\begin{proof} Let $e=uz$. By Lemma \ref{art26},
\begin{eqnarray*}
M(L(B))=M(L(B(u,w)))+M(L(B(z,w))).
\end{eqnarray*}

Obviously, $B(u,w), B(z,w)\in \mathscr{U}_{n}$.
According to the definition of $\mathscr{B}_{3}$, $m(B)$ is even.
We consider two cases as follows.

\textbf{Case 1:} $m(B(u,w))$ is even and $m(B(z,w))$ is odd.

By the above definition, we have $\sum_{i} k_i^0=s_1$ is odd,
where the summation takes over all vertices of $C_p$. Then $M(L(B(z,w)))=0$.
By Theorem \ref{art28}, $M(L(B(u,w)))$ is odd. It follows that  $M(L(B))$ is odd by the above equation  and  $B\in \mathscr{B}_{3}$ is $(1,2)$-choosable by Theorem \ref{art22}.

\textbf{Case 2:} $m(B(u,w))$ is odd and $m(B(z,w))$ is even.

By the above definition, we have $\sum_{i} k_i^0=s_1$ is even,
where the summation takes over all vertices of $C_q$. Then $M(L(B(u,w)))=0$.
By Theorem \ref{art28}, $M(L(B(z,w)))$ is odd. It follows that  $M(L(B))$ is odd by the above equation and  $B\in \mathscr{B}_{3}$ is $(1,2)$-choosable by Theorem \ref{art22}.
\end{proof}

Let $\mathscr{B}_{4}\subset \mathscr{B}_n^3(p,q,r)$
be the set of all graphs obtained
by identifying every vertex $v_i$ of $B_3(p,q,r)$ with
any leaf of each of the $k_i$ trees,
where $k_i=k_i^0+k_i^1$
(the number of trees with even number of edges $(\geq 2)$ is denoted by $k_i^0$,
the number of trees with odd number of edges $(\geq 3)$ is denoted by $k_i^1$)
and $\sum_{i} k_i^0=s_1$ is odd or even,
where the summation takes over all vertices of $P_p$, $P_q$ and $u,v$.
Because we shall think about the number of perfect matchings of line graph,
$n$ is assumed to be odd.

\begin{lemma}\label{art212}
Let $B\in \mathscr{B}_{4}$ be a bicyclic graph. Then $B$ is $(1,2)$-choosable.
\end{lemma}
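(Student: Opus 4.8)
The plan is to follow the proof of Lemma~\ref{art29} closely. Write $B_3(p,q,r)$ with its two branch vertices $u,v$ and its three internally disjoint $u$--$v$ paths $P_p,P_q,P_r$ as in Figure~\ref{fig1}; since $B$ is simple, at most one of these three paths is a single edge, so after relabelling we may assume that $P_r$ has length at least $2$. Let $z$ be the neighbour of $u$ on $P_r$ and apply Lemma~\ref{art26} to the edge $e=uz$, which gives $M(L(B))=M(L(B(u,w)))+M(L(B(z,w)))$. Deleting $e$ from the bicyclic graph $B$ destroys exactly the two cycles of $B_3(p,q,r)$ containing $e$ and leaves the cycle $C_{p+q}$ formed by $P_p$ and $P_q$, so $B-e$ is connected and unicyclic; hence $B(u,w)$ and $B(z,w)$ are connected unicyclic graphs, each on $n+1$ vertices and with cycle $C_{p+q}$. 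As $n$ is odd, $n+1$ is even, so $B(u,w),B(z,w)\in\mathscr{U}_{n+1}$, and Theorem~\ref{art28} and Lemma~\ref{art27} are applicable to them.

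Next I would decide, in every case, which of $B(u,w)$ and $B(z,w)$ lies in $\mathscr{U}_1$ and which lies in $\mathscr{U}_2$. Let $m_r$ be the number of edges of $B$ on the $P_r$-side, that is, the edges of $P_r$ together with the edges of the trees attached to the internal vertices of $P_r$, and let $B'$ be obtained from $B$ by deleting the $P_r$-side; since $m(B)=n+1$ is even, $m_r$ and $m(B')$ have the same parity. After $e$ is removed the $P_r$-side becomes a single tree with $m_r-1$ edges hanging on $v$, so in $B(u,w)$ the number of even-edge trees on the cycle $C_{p+q}$ exceeds $s_1$ by $1$ if $m_r$ is odd and by $0$ if $m_r$ is even, the extra pendant edge at $u$ produced by Lemma~\ref{art26} being a one-edge, hence odd-edge, tree. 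In $B(z,w)$ that pendant edge is absorbed into the tree on $v$, which then has $m_r$ edges, so the number of even-edge trees on $C_{p+q}$ exceeds $s_1$ by $1$ if $m_r$ is even and by $0$ if $m_r$ is odd. Consequently the $s$-parameters of $B(u,w)$ and $B(z,w)$ are, in some order, congruent to $s_1$ and to $s_1+1$ modulo $2$, whatever the parity of $s_1$; recording this in two cases according to the parity of $m(B')$, each split into two subcases according to the parity of $s_1$ exactly as in Lemma~\ref{art29}, one finds in every case that precisely one of $B(u,w),B(z,w)$ lies in $\mathscr{U}_1$ and the other lies in $\mathscr{U}_2$.

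It then follows from Theorem~\ref{art28} that the line graph of the member lying in $\mathscr{U}_1$ has an odd number of perfect matchings, and from Lemma~\ref{art27} that the line graph of the member lying in $\mathscr{U}_2$ has an even number; adding, $M(L(B))$ is odd, and Theorem~\ref{art22} gives that $B$ is $(1,2)$-choosable.

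The step I expect to be the main obstacle is the middle one: one has to describe precisely how the trees of $B$ are redistributed among the vertices of the cycles of $B(u,w)$ and of $B(z,w)$ and then check that the resulting graphs really are of the form parametrised in the definition of $\mathscr{U}_n$, so that Theorem~\ref{art28} and Lemma~\ref{art27} genuinely apply. This is where the reduction to $P_r$ of length at least $2$ is used, so that the tree hanging on $v$ in $B(u,w)$ is nonempty; one also has to keep track of the fact that the pendant edge introduced by Lemma~\ref{art26} counts as an odd-edge tree and so leaves the parity of the $s$-parameter unchanged. Everything else is a direct appeal to the results already established.
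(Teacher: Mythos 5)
Your proposal is correct and follows essentially the same route as the paper: delete the edge joining $u$ to the $P_r$-side, apply Lemma~\ref{art26}, observe that the two resulting unicyclic graphs have $s$-parameters $s_1$ and $s_1+1$ in some order depending on the parity of $m(B')$, and conclude via Theorem~\ref{art28}, Lemma~\ref{art27} and Theorem~\ref{art22} that $M(L(B))$ is odd. Your accounting of how the hanging trees and the new pendant edge redistribute (and why the pendant edge, being an odd-edge tree, does not affect the parity of $s$) is in fact more explicit than the paper's, which simply asserts the values $x'$ and $y'$.
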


\begin{proof}Let $e=uy$. By Lemma \ref{art26},
\begin{eqnarray*}
M(L(B))=M(L(B(u,w)))+M(L(B(y,w))).
\end{eqnarray*}

Clearly, $B(u,w), B(y,w)\in \mathscr{U}_{n}$.
By the definition of $\mathscr{B}_{4}$, $m(B)$ is even.
Then the number of edges of $B(u,w)$ and $B(y,w)$ are both even.
Set $s=x'$ for $B(u,w)$ and $s=y'$ for $B(y,w)$.

Let $B'$ be a subgraph of $B$
induced by $P_{r}$, edges $uy,vx$ and the trees hanging
on all vertices of $P_{r}$.
We consider two cases as follows.

\textbf{Case 1:} $m(B')$ is even.

If $s_1$ is odd, then $x'=s_1$
and $y'=s_1+1$. Then $M(L(B(u,w)))$ is odd by Theorem \ref{art28} and  $M(L(B(y,w)))$ is even by Lemma \ref{art27}. It follows that $M(L(B))$ is odd by the above equation  and $B\in \mathscr{B}_{4}$ is $(1,2)$-choosable by Theorem \ref{art22}.

If $s_1$ is even, then $x'=s_1$
and $y'=s_1+1$.
Hence, $M(L(B(u,w)))$ is even by Lemma \ref{art27} and  $M(L(B(y,w)))$ is odd by Theorem \ref{art28}. It follows that $M(L(B))$ is odd by the above equation $(7)$ and  $B\in \mathscr{B}_{4}$ is $(1,2)$-choosable by Theorem \ref{art22}.

\textbf{Case 2:} $m(B')$ is odd.

If $s_1$ is odd, then $x'=s_1+1$
and $y'=s_1$.
Hence, $M(L(B(u,w)))$ is even by Lemma \ref{art27}.
According to Theorem \ref{art28}, we have $M(L(B(y,w)))$ is odd. It follows that $M(L(B))$ is odd by the above equation $(7)$ and  $B\in \mathscr{B}_{4}$ is $(1,2)$-choosable by Theorem \ref{art22}.

If $s_1$ is even,then $x'=s_1+1$ and $y'=s_1$.
Hence, $M(L(B(u,w)))$ is  odd by Theorem \ref{art28} and
$M(L(B(y,w)))$ is even by Lemma \ref{art27}.
It follows that $M(L(B))$ is odd by the above equation and  $B\in \mathscr{B}_{4}$ is $(1,2)$-choosable by Theorem \ref{art22}.
\end{proof}

According to Lemmas \ref{art29},\ref{art210},\ref{art211} and \ref{art212},
we obtain the following result in this section.

\begin{theorem}\label{art213}
Let $B\in \mathscr{B}_{1}\cup\mathscr{B}_{2}\cup\mathscr{B}_{3}\cup\mathscr{B}_{4}$
be a bicyclic graph.
Then $B$ is $(1,2)$-choosable.
\end{theorem}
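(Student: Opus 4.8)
The plan is to obtain Theorem \ref{art213} as an immediate consequence of the four preceding lemmas via a case analysis on the structural type of $B$. Recall that every bicyclic graph on $n$ vertices lies in $\mathscr{B}_n = \mathscr{B}_n^1(p,q) \cup \mathscr{B}_n^2(p,q,r) \cup \mathscr{B}_n^3(p,q,r)$, and that $\mathscr{B}_1 \subset \mathscr{B}_n^1(p,q)$, $\mathscr{B}_2,\mathscr{B}_3 \subset \mathscr{B}_n^2(p,q,r)$, and $\mathscr{B}_4 \subset \mathscr{B}_n^3(p,q,r)$, each carved out by the parity conditions on $s_1 = \sum_i k_i^0$ described above. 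So I would simply observe: if $B \in \mathscr{B}_1$ then Lemma \ref{art29} applies; if $B \in \mathscr{B}_2$ then Lemma \ref{art210} applies; if $B \in \mathscr{B}_3$ then Lemma \ref{art211} applies; and if $B \in \mathscr{B}_4$ then Lemma \ref{art212} applies. Since $B$ belongs to $\mathscr{B}_1 \cup \mathscr{B}_2 \cup \mathscr{B}_3 \cup \mathscr{B}_4$, at least one of these covers it, and in every case the conclusion is that $B$ is $(1,2)$-choosable.

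It is worth recording the mechanism shared by those four lemmas, since it is the actual content behind the theorem. For $B$ in any of the families, one fixes an appropriately chosen edge $e = uv$ lying on one of the two cycles and applies the splitting recursion of Lemma \ref{art26}, namely $M(L(B)) = M(L(B(u,w))) + M(L(B(v,w)))$, noting that deleting a cycle edge and attaching a pendant vertex makes both $B(u,w)$ and $B(v,w)$ unicyclic (or, after a further split, trees). The parity conditions defining $\mathscr{B}_1,\ldots,\mathscr{B}_4$ are exactly what is needed so that, via Theorem \ref{art28} and Lemma \ref{art27} (and Theorem \ref{art25} at the tree level), one summand is odd and the other is even, whence $M(L(B))$ is odd. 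Theorem \ref{art22} then yields $(1,2)$-choosability.

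The hard part is thus not the theorem itself but the lemmas behind it: arranging the edge-count parities in every subcase (which cycle to split, whether $m(B')$ is even or odd, whether $s_1$ is odd or even) so that the two unicyclic summands always have opposite parities. Once Lemmas \ref{art29}--\ref{art212} are in hand, Theorem \ref{art213} follows with no further work beyond the case distinction above.
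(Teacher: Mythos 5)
Your proposal matches the paper's own treatment: the theorem is obtained there as an immediate consequence of Lemmas \ref{art29}, \ref{art210}, \ref{art211} and \ref{art212}, exactly via the case distinction on whether $B$ lies in $\mathscr{B}_{1}$, $\mathscr{B}_{2}$, $\mathscr{B}_{3}$ or $\mathscr{B}_{4}$. Your recap of the underlying mechanism (splitting a cycle edge via Lemma \ref{art26} and arranging parities so that $M(L(B))$ is odd, then invoking Theorem \ref{art22}) is also an accurate description of what those lemmas do, so there is nothing to correct.
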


\section{Total weight choosability of $(n,m)$-graphs when $m=n$ and $n+1$}

In this section, we will show that all $(n,m)$-graphs are
$(2,2)$-choosable and $(1,3)$-choosable, where $m=n$ and $n+1$.
Obviously, $(n,m)$-graphs are unicyclic graphs when $m=n$;
$(n,m)$-graphs are bicyclic graphs when $m=n+1$. A \emph{sink} in a digraph is a vertex of outdegree zero. Before the proof of main theorems,
we present some lemmas as follows.

\begin{lemma}(\cite{wong1})\label{art33}
Let an index function $\eta$ be non-singular
if there is a valid index function $\eta'\leq \eta$ with ${\rm per}(A_{G}(\eta'))\neq 0$.
Suppose $G$ is a graph, $\eta$ is an index function of $G$
with $\eta(e)=1$ for all edges $e$,
and $X$ is a subset of $V(G)$.
Let $G'=G-E[X]$ be obtained from $G$
by deleting edges in $G[X]$.
Let $D$ be an acyclic orientation of $G'$,
in which each vertex $v\in X$ is a sink.
Assume that $D'$ is a sub-digraph of $D$
such that for all $v\in V(D)$:
\begin{eqnarray*}
\eta(v)+2d_{D'}^{-}(v)-d_{D}^{-}(v)\geq d_{D'}^{+}(v).
\end{eqnarray*}
Let $\eta'$ be the index function defined as $\eta'(e)=1$
for every edge $e$ of $G[X]$
and $\eta'(v)=\eta(v)+2d_{D'}^{-}(v)-d_{D}^{-}(v)$
for $v\in X$.
If $\eta'$ is a non-singular index function for $G[X]$,
then $\eta$ is a non-singular index function for $G$.
\end{lemma}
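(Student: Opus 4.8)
The plan is to work through the standard translation between permanents and coefficients of a graph polynomial, and to build a good column choice for $A_G$ out of the one guaranteed on $G[X]$, using the orientation $D$ to linearly order the construction and the sub-digraph $D'$ to re-route the arcs incident to $X$. For an edge $e=(u,v)$ (oriented from $u$ to $v$) write $\ell_e=\sum_{y}A_G[e,y]X_y$, a linear form in the variables $\{X_y : y\in V(G)\cup E(G)\}$, so that $\ell_e=X_v-X_u+\sum_{g\in E(v)\setminus\{e\}}X_g-\sum_{g\in E(u)\setminus\{e\}}X_g$. Expanding $\prod_{e\in E(G)}\ell_e$ shows that for any index function $\alpha$ the coefficient of $\prod_y X_y^{\alpha(y)}$ equals ${\rm per}(A_G(\alpha))$ divided by $\prod_y\alpha(y)!$. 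Hence $\eta$ is non-singular for $G$ exactly when some valid $\alpha\le\eta$ yields a monomial $\prod_y X_y^{\alpha(y)}$ occurring in $\prod_e\ell_e$ with nonzero coefficient, and the whole lemma becomes the task of producing such a monomial. The same equivalence applied to $G[X]$, together with the hypothesis, furnishes a valid $\zeta\le\eta'$ on $V(X)\cup E[X]$ (with $\sum\zeta=|E[X]|$) for which $\prod_{f\in E[X]}\ell_f^{[X]}$ contains $\prod_y X_y^{\zeta(y)}$ with nonzero coefficient, where $\ell_f^{[X]}$ is the analogous form for the induced subgraph $G[X]$.

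\textbf{Factoring the product.} I would write $\prod_{e\in E(G)}\ell_e=\big(\prod_{e\in E(G')}\ell_e\big)\cdot\big(\prod_{f\in E[X]}\ell_f\big)$ and seek the target monomial in the shape $\mu\cdot\nu$, where $\mu=\prod_{y\in V(X)\cup E[X]}X_y^{\zeta(y)}$ is contributed by the $E[X]$-factors and $\nu$ by the $E(G')$-factors. The crucial observation is that each $\ell_f$ with $f\in E[X]$ differs from $\ell_f^{[X]}$ only by terms $\pm X_g$ with $g\in E(G')$ incident to $X$; consequently, provided $\nu$ uses none of the $E[X]$-edge variables and none of the vertex variables $X_x$ with $x\in X$, the coefficient of $\mu\cdot\nu$ in $\prod_e\ell_e$ is exactly the product of the coefficient of $\mu$ in $\prod_f\ell_f^{[X]}$ (already nonzero) and the coefficient of $\nu$ in $\prod_{e\in E(G')}\ell_e$. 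Thus it suffices to select from each $G'$-factor one term so that the resulting $\nu$ (i) avoids the forbidden variables above, (ii) has exponents at most $\eta$, and (iii) survives with a nonzero, indeed $\pm1$, coefficient.

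\textbf{Carrying out the selection.} Since $D$ is acyclic, fix a topological order of its vertices with all arcs pointing forward; the sinks, in particular all of $X$, come last. Processing the $G'$-factors, I would assign each arc not in $D'$ to its head (picking the $+X_{\text{head}}$ term) and each arc of $D'$ to its tail, except that arcs whose head lies in $X$ must be diverted from the forbidden vertex variable and are instead routed to the tail or to an incident $G'$-edge variable. Counting how many times each symbol is picked then leaves exactly $\eta(v)+2d_{D'}^{-}(v)-d_{D}^{-}(v)=\eta'(v)$ slots available at $v\in X$ for the $E[X]$-part, which is why the reduced bound on $G[X]$ is $\eta'$ rather than $\eta$; at every vertex the number of arcs routed to its out-side is $d_{D'}^{+}(v)$, so the hypothesis $\eta(v)+2d_{D'}^{-}(v)-d_{D}^{-}(v)\ge d_{D'}^{+}(v)$ is precisely what guarantees that no budget is exceeded, i.e.\ that the exponents of $\nu$ together with $\zeta$ stay $\le\eta$. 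Finally, because the arcs are handled in topological order, each selection introduces a variable not yet maximal in the order, so the constructed monomial is the unique lexicographically leading term produced by these choices, and its coefficient in $\prod_{e\in E(G')}\ell_e$ is a product of $\pm1$'s that cannot be cancelled.

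\textbf{Main obstacle.} The delicate point is not the counting of degrees but the non-cancellation, together with the exact re-routing of the $D'$-arcs and of the arcs pointing into $X$ through edge-columns. One must verify that the greedy, topologically ordered selection genuinely pins down a single surviving term, so that the coefficient of $\nu$ is $\pm1$ and its product with the nonzero coefficient of $\mu$ is nonzero, and simultaneously that the diversions forced at the sinks in $X$ produce precisely the shift $2d_{D'}^{-}(v)-d_{D}^{-}(v)$ in the available multiplicity; this is exactly where the acyclicity of $D$ and the sink condition on $X$ are indispensable and where the inequality hypothesis is consumed. A cleaner alternative would be induction on $|E(G')|$: delete one arc of $D$ incident to a source, use a one-edge Laplace-type expansion to relate ${\rm per}(A_G(\eta))$ to the permanent on the smaller graph with the index function adjusted according to whether the deleted arc lies in $D'$, and invoke the induction hypothesis, the base case $E(G')=\varnothing$ being just $G=G[X]$ with $\eta=\eta'$.
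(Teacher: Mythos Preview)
The paper does not prove this lemma at all: it is quoted verbatim from \cite{wong1} and used as a black box in Theorems~\ref{art310} and~\ref{art312}. So there is no ``paper's own proof'' to compare against; any comparison would have to be with the original Wong--Zhu argument in \cite{wong1}.

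On the substance of your sketch: the overall architecture---translate ${\rm per}(A_G(\eta'))$ into a monomial coefficient of $\prod_e\ell_e$, factor off the $E[X]$ part, and make a greedy column selection along a topological order of $D$---is indeed the shape of the Wong--Zhu proof. But your concrete selection rule (``arcs not in $D'$ to the head, arcs in $D'$ to the tail, divert at $X$ to an edge or tail'') does not reproduce the count you claim. Under that rule the exponent of $X_v$ for $v\notin X$ becomes $(d_D^-(v)-d_{D'}^-(v))+d_{D'}^+(v)$, whereas the hypothesis only gives $\eta(v)\ge d_D^-(v)-2d_{D'}^-(v)+d_{D'}^+(v)$; the missing factor of $2$ on $d_{D'}^-(v)$ is exactly the point of the lemma. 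In \cite{wong1} an arc $e=(u,v)\in D'$ is not sent to the tail-vertex column but to an \emph{edge} column of another in-arc of $v$, and this cascading through edge columns is what makes each $D'$-in-arc save two units of vertex budget (one by not occupying $X_v$, one by absorbing another in-arc's edge column). Your ``Main obstacle'' paragraph correctly flags the re-routing and non-cancellation as the heart of the matter, but the rule you wrote down before it does not yet do the job; the inductive alternative you mention at the end is closer to a workable path.
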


\begin{lemma}(\cite{wong2})\label{art34}
Suppose $G$ is obtained from a graph $G'$ by adding one vertex $v$
and one edge $e=uv$, where $u$ is a vertex of $G'$.
If ${\rm pind}(A_{G'})=1$, then ${\rm pind}(A_{G})=1$.
If $G'$ is $(2,2)$-choosable, then $G$ is $(2,2)$-choosable.
\end{lemma}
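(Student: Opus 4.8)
The plan is to treat the two assertions separately, since the first is a statement about permanents and the second about list total-weightings, even though both hinge on the fact that the new vertex $v$ is a leaf incident only to the new edge $e=uv$.

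For the permanent-index statement, I would orient $e$ from $u$ to $v$ and first record the key structural fact: in $A_G$ the column indexed by the new vertex $v$ has a single nonzero entry, namely $A_G[e,v]=1$, because $v$ is incident only to $e$. Moreover, for every old edge $f\in E(G')$ and every old column $y\in V(G')\cup E(G')$ one has $A_G[f,y]=A_{G'}[f,y]$, since adjoining the pendant $v$ and the edge $e$ changes no incidence among the vertices and edges of $G'$. Now start from a set $S'$ of $|E(G')|$ distinct columns of $A_{G'}$ with nonzero permanent that witnesses ${\rm pind}(A_{G'})=1$, and form $S=S'\cup\{v\}$, a set of $|E(G)|$ distinct columns of $A_G$. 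Expanding the permanent of the resulting square submatrix along the column indexed by $v$, and using that this column is nonzero only in row $e$, the permanent equals that of the minor obtained by deleting row $e$ and column $v$; by the two facts above this minor is exactly the $S'$-submatrix of $A_{G'}$, whose permanent is nonzero. Hence ${\rm pind}(A_G)=1$, with each column still used at most once.

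For the $(2,2)$-choosability statement, the naive idea of weighting $G'$ properly and then choosing $f(e),f(v)$ fails, and this is the main obstacle: prescribing $f(e)$ adds the constant $f(e)$ to $s(u)$ while leaving the weights of all neighbours of $u$ untouched, so a proper weighting of $G'$ need not stay proper at $u$ once the pendant edge is attached. The remedy is to absorb $f(e)$ into the vertex list of $u$ before invoking choosability. Given a $(2,2)$-list assignment $L$ of $G$, fix any value $a\in L(e)$ and define a new $(2,2)$-list assignment $L'$ on $G'$ by $L'(u)=\{\,t+a : t\in L(u)\,\}$ and $L'(z)=L(z)$ otherwise. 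Applying the $(2,2)$-choosability of $G'$ to $L'$ yields a proper $L'$-total weighting $g$; writing $g(u)=t_0+a$ with $t_0\in L(u)$, I would set $f(e)=a$, $f(u)=t_0$, and $f(z)=g(z)$ for all remaining $z\in V(G')\cup E(G')$. A direct check then gives $s_G(u)=s^{g}_{G'}(u)$ and $s_G(x)=s^{g}_{G'}(x)$ for every other $x\in V(G')$, so the properness of $g$ guarantees $s_G(x)\neq s_G(y)$ for every edge of $G'$ regarded in $G$. Finally, since $v$ has degree one, $s_G(v)=f(v)+a$ and the only remaining constraint is $f(v)+a\neq s_G(u)$; as $|L(v)|=2$, at most one choice of $f(v)$ violates it, so the other choice completes a proper $L$-total weighting of $G$, proving that $G$ is $(2,2)$-choosable.
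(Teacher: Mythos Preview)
Your proof is correct. The paper does not supply its own proof of this lemma; it is quoted from \cite{wong2} and used as a black box, so there is no in-paper argument to compare against. Your treatment of the permanent-index part is the natural one (expand along the pendant column), and your treatment of the $(2,2)$-choosability part---shifting the list at $u$ by the pre-chosen weight $a\in L(e)$ so that the pendant edge weight is absorbed before invoking choosability of $G'$---is exactly the mechanism the paper itself deploys later in the proof of Theorem~\ref{art43}. One tiny point worth making explicit in Part~1: you are free to orient $G$ so that it extends the orientation of $G'$ used to witness ${\rm pind}(A_{G'})=1$; this is implicit in your identification of the minor with the $S'$-submatrix of $A_{G'}$.
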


\begin{lemma}(\cite{bart})\label{art35}
If $T$ is a tree with at least two edges, then ${\rm pind}(B_{T})\leq 2$.
Hence $T$ is $(1,3)$-choosable.
\end{lemma}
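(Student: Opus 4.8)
The plan is to translate the claim ${\rm pind}(B_T)\le 2$ into a statement about a single polynomial coefficient and then prove the latter by induction on the tree. By the criterion stated above (that $G$ is $(1,3)$-choosable whenever ${\rm pind}(B_G)\le 2$), it suffices to produce an index function $\eta$ on $E(T)$ with $\eta(e)\le 2$ for every edge, $\sum_e\eta(e)=m$, and ${\rm per}(B_T(\eta))\neq 0$. First I would introduce one variable $x_f$ per edge $f$ and form $P_T(x)=\prod_{e\in E(T)}L_e(x)$, where $L_e(x)=\sum_{y}B_T[e,y]\,x_y$ is the linear form read off row $e$. Expanding the product shows that the coefficient of a monomial $\prod_f x_f^{\eta_f}$ equals ${\rm per}(B_T(\eta))/\prod_f\eta_f!$; thus the goal becomes to show that $P_T$ has a monomial all of whose exponents are at most $2$.

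The crucial simplification is that each linear form collapses. Writing $y_w=\sum_{f\ni w}x_f$ for the sum of edge-variables at a vertex $w$, and using that $x_e$ cancels, one gets $L_e=y_v-y_u$ for $e=(u,v)$, so that
\[
P_T(x)=\prod_{e=uv\in E(T)}(y_v-y_u).
\]
This factorisation is what I would exploit. For a leaf $\ell$ with neighbour $v$ and leaf-edge $e_0=v\ell$, one has $y_\ell=x_{e_0}$ and $y_v=x_{e_0}+y_v'$ with $y_v'=\sum_{f\ni v,\,f\neq e_0}x_f$, so the factor of $e_0$ equals $\pm y_v'$ and is free of $x_{e_0}$. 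Collecting the part of $P_T$ of degree $0$ in $x_{e_0}$ then yields the recursion $[P_T]_{\deg_{x_{e_0}}=0}=\pm\,y_v'\cdot P_{T'}$, where $T'=T-\ell$. Hence a good monomial for $T$ with $\eta(e_0)=0$ comes from a good monomial of $y_v'\cdot P_{T'}$.

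I would then run an induction on $|E(T)|$. The base cases are $T=P_3$ (direct) and, more tellingly, the star $K_{1,d}$: here $P_{K_{1,d}}=\prod_i(x_{f_i}-\Sigma)$ with $\Sigma=\sum_j x_{f_j}$, and the coefficient of the squarefree monomial $\prod_i x_{f_i}$ is $\pm D_d$, a derangement number, which is nonzero for $d\ge 2$. This already shows the witnessing monomial is typically an \emph{interior} one rather than a leading one. For the inductive step I would feed in the recursion above, strengthening the hypothesis so as to track, at the active vertex $v$, a monomial of $P_{T'}$ with exponents $\le 2$ that leaves room on some edge at $v$ for the extra factor $y_v'$. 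Iterating the two sub-cases—removing a leaf at $v$ versus away from $v$—produces an object of the shape $\prod_i y_{v_i}^{a_i}\cdot P_{T^\ast}$, for which a good monomial must be exhibited.

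The main obstacle is the non-vanishing of the coefficient, that is, controlling sign cancellation. Multiplying $P_{T'}$ by a vertex-form $y_v'=\sum_{f\ni v}x_f$ and extracting a target monomial $\nu$ produces $\sum_{f\ni v}\bigl[\text{coeff of }\nu/x_f\text{ in }P_{T'}\bigr]$, a signed sum that could collapse to $0$. The naive fix of taking the leading monomial under a term order, whose coefficient is automatically $\pm1$, fails because that monomial can carry an exponent as large as the line-graph degree (already $d-1$ for $K_{1,d}$). The real work is therefore to choose the target monomial together with the strengthened inductive invariant so that either exactly one term survives, giving coefficient $\pm1$, or the surviving signed count equals a nonzero derangement/matching-type quantity, in the spirit of the double-factorial formula for $M(L(T))$ recorded above. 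Once this combinatorial invariant is pinned down, the recursion closes and delivers a monomial of $P_T$ with all exponents at most $2$, whence ${\rm pind}(B_T)\le 2$ and $T$ is $(1,3)$-choosable.
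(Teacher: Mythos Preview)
The paper does not give its own proof of this lemma; it is quoted verbatim from \cite{bart} and used as a black box. So there is nothing in the paper to compare your argument against, and the only question is whether your proposal stands on its own.

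It does not. What you have written is a programme, not a proof. The translation to the polynomial $P_T=\prod_{e=uv}(y_v-y_u)$ is correct, the observation that the leaf-edge factor collapses to $\pm y_v'$ is correct, and the recursion $[P_T]_{x_{e_0}=0}=\pm\,y_v'\cdot P_{T'}$ is correct. But at the decisive point you write that ``the real work is therefore to choose the target monomial together with the strengthened inductive invariant'' and that ``once this combinatorial invariant is pinned down, the recursion closes.'' You never pin it down. The strengthened hypothesis is never stated, the inductive step is never carried out, and the sign-cancellation obstacle you yourself flag is never resolved. A reader cannot reconstruct the missing invariant from what you have written; indeed you explicitly list two mutually exclusive mechanisms (exactly one surviving term versus a derangement-type count) without committing to either.

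Concretely, the danger is this: after one leaf removal you must find a good monomial in $y_v'\cdot P_{T'}$; after two you may need one in $y_{v_1}'y_{v_2}'\cdot P_{T''}$, and so on. The product of vertex-forms can pile exponent weight onto the edges near a high-degree vertex, and your own $K_{1,d}$ calculation shows that the naive ``leading monomial'' escape route is blocked. Without a precise invariant that bounds how much extra weight accumulates at each vertex and certifies non-cancellation at every stage, the induction simply does not go through. As written, the proposal stops exactly where the difficulty begins.
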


A \emph{hanging edge} of a graph $G$ is an edge $e=uv$ of $G$
such that $d_G(v)=1$ and $d_G(u)=2$ or 3.

\begin{lemma}(\cite{wong2})\label{art36}
Let $G$ be a graph containing a hanging edge $e=uv$
and $G'=G-\{u,v\}$.
If ${\rm pind}(B_{G'})\leq 2$, then ${\rm pind}(B_{G})\leq 2$.
\end{lemma}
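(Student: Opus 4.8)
The plan is to prove the statement directly from the definition of the permanent index, by taking a good column selection for $B_{G'}$ and extending it to one for $B_{G}$. Write $N=|E(G)|$ and $d=d_G(u)\in\{2,3\}$, and let the edges incident with $u$ be $e=uv,h_1,\dots,h_{d-1}$; since $v$ is a leaf, $e$ is its only incident edge, so $E(G')=E(G)\setminus\{e,h_1,\dots,h_{d-1}\}$ and $|E(G')|=N-d$. I would orient every edge at $u$ out of $u$ and otherwise extend the orientation that defines $B_{G'}$, so that the submatrix of $B_G$ on the rows and columns indexed by $E(G')$ is exactly $B_{G'}$.

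The first step is to record the shape of the new rows and columns. Because the endpoints $u,v$ of $e$ lie outside $G'$, one checks $B_G[e,f]=B_G[f,e]=0$ for every $f\in E(G')$; the column $B_G(e)$ is nonzero only in the rows $h_1,\dots,h_{d-1}$ (each entry $-1$, as $e$ meets the tail $u$ of $h_i$), and symmetrically the row $B_G[e,\cdot]$ is nonzero only in the columns $h_1,\dots,h_{d-1}$ (each $-1$). Thus the column $B_G(e)$ has exactly $d-1\le 2$ nonzero entries, which is the one place the hypothesis $d\in\{2,3\}$ is used.

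Since ${\rm pind}(B_{G'})\le 2$, I may fix a multiset $S$ of columns of $B_{G'}$, each used at most twice, with $|S|=N-d$ and ${\rm per}(B_0')\ne 0$, where $B_0'=B_{G'}[E(G'),S]$. I then form the $N\times N$ matrix $\tilde B$ whose columns are $S$ together with $d-1$ copies of the column $B_G(e)$ and one copy of the column $B_G(h_1)$; every column of $B_G$ is then used at most twice. To evaluate ${\rm per}(\tilde B)$ I argue that every nonzero term of the permanent has a forced shape: each of the $d-1$ copies of $B_G(e)$ is supported only on the rows $\{h_1,\dots,h_{d-1}\}$, so bijectivity forces these $d-1$ rows to be matched precisely to the copies of $B_G(e)$; consequently, within $\tilde B$ the row $e$ is left with its single nonzero entry in column $h_1$, forcing row $e\mapsto$ column $h_1$; the remaining rows $E(G')$ must then fill the columns $S$. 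Collecting the constant factors (all forced entries equal $-1$, and there are $(d-1)!$ ways to match the $h_i$'s to the copies of $B_G(e)$) yields ${\rm per}(\tilde B)=(d-1)!\,(-1)^{d}\,{\rm per}(B_0')$, a nonzero real multiple of ${\rm per}(B_0')$, hence nonzero; therefore ${\rm pind}(B_G)\le 2$.

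The step I expect to be the crux is this forced-shape argument, and in particular the observation that the column $B_G(h_1)$ is always consumed by row $e$, so that the off-diagonal entries of $B_G(h_1)$ over $E(G')$ (coming from edges of $G'$ incident with the other endpoint of $h_1$) never enter a surviving term and cannot cancel ${\rm per}(B_0')$. Making this precise—expanding the permanent first along the copies of $B_G(e)$ and then along row $e$—is what guarantees there is no interference between the attachment vertex's neighbourhood in $G'$ and the new block. Using the column $B_G(e)$ with multiplicity $d-1$ (rather than trying to include each of $h_1,\dots,h_{d-1}$) is the device that both respects the multiplicity bound $2$ and eliminates all interaction terms at once.
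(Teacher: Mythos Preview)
The paper does not give its own proof of this lemma; it is stated with a citation to \cite{wong2} and then used as a black box. So there is no in-paper argument to compare against.

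Your proof is correct. The forced-shape argument works exactly as you describe: the $d-1$ copies of the column $B_G(e)$ are supported only on the rows $h_1,\dots,h_{d-1}$, so in any nonzero term of the permanent they must occupy precisely those rows; row $e$ is then left with its single available nonzero column $h_1$; and the remaining rows $E(G')$ bijectively fill $S$, where the restricted block is literally $B_{G'}$ restricted to $S$ because edges of $G'$ are adjacent in $G$ exactly when they are in $G'$. The resulting identity ${\rm per}(\tilde B)=(d-1)!\,(-1)^{d}\,{\rm per}(B_0')$ is correct, and the multiplicity bound (column $e$ used $d-1\le 2$ times, column $h_1$ once, columns of $S$ at most twice, and these three groups are disjoint since $e,h_1\notin E(G')$) is satisfied. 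The only cosmetic point is that you should also say a word about why $\text{pind}(B_G)$ is independent of the chosen orientation (flipping an edge negates a row of $B_G$, which only changes the sign of permanents), so that your particular choice of orienting all edges at $u$ outward is harmless; otherwise the argument is complete.
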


\begin{lemma}(\cite{zhu})\label{art37}
Let $G'$ be obtained from a graph $G$ by adding two new vertices $u, v$
and two new edges $e_1=uv, e_2=uw$,
where $w\in V(G)$. Then ${\rm pind}(B_{G'})\leq {\rm pind}(B_{G})$.
\end{lemma}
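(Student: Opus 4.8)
The plan is to read off the block structure of $B_{G'}$ relative to $B_G$ and then lift any optimal column-selection for $B_G$ to one for $B_{G'}$ of the same multiplicity. First I would orient the two new edges as $e_1=(u,v)$ and $e_2=(u,w)$ and record the new rows and columns of $B_{G'}$. Since $\deg_{G'}(v)=1$ and $\deg_{G'}(u)=2$, the column indexed by $e_1$ has a single nonzero entry, namely $B_{G'}[e_2,e_1]=-1$ (the only edge $\neq e_1$ meeting $u$ is $e_2$), and likewise the row indexed by $e_1$ has a single nonzero entry $B_{G'}[e_1,e_2]=-1$. For an original edge $f$, its $B_{G'}$-column agrees with its $B_G$-column on the original rows, carries a $0$ in row $e_1$, and carries $B_{G'}[e_2,f]=1$ or $0$ in row $e_2$ according as $f$ is incident to $w$ or not. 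Writing $\mathbf{c}$ for the restriction of column $e_2$ to the original rows and $\mathbf{r}$ for the restriction of row $e_2$ to the original columns, I would record the block form (rows and columns ordered as original edges, then $e_1$, then $e_2$):
\[
B_{G'}=\begin{pmatrix} B_G & \mathbf{0} & \mathbf{c}\\ \mathbf{0}^{\top} & 0 & -1\\ \mathbf{r}^{\top} & -1 & 0\end{pmatrix}.
\]

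Next, set $k={\rm pind}(B_G)$ and pick an $m\times m$ matrix $B'$ whose columns are columns of $B_G$, each used at most $k$ times, with ${\rm per}(B')\neq 0$. I would then form the $(m+2)\times(m+2)$ matrix $B''$ whose columns are the $B_{G'}$-extensions of the columns of $B'$ (each original column extended by a $0$ in row $e_1$ and by $\mathbf{r}[f]$ in row $e_2$) together with the columns $e_1$ and $e_2$ of $B_{G'}$, each of the latter taken exactly once. Every column of $B_{G'}$ then appears in $B''$ at most $\max\{k,1\}=k$ times, using $k\geq 1$ (the relevant case in which $G$ has an edge).

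It remains to show ${\rm per}(B'')\neq 0$, which I would do by two single-entry Laplace expansions, valid for permanents with no sign factors. Because the column $e_1$ of $B''$ retains its unique nonzero entry $-1$ in row $e_2$, expanding the permanent along this column gives ${\rm per}(B'')=(-1)\,{\rm per}(M_1)$, where $M_1$ is obtained by deleting row $e_2$ and column $e_1$. In $M_1$ the row $e_1$ now has its unique nonzero entry $-1$ in column $e_2$ (the entries of $e_1$ in the original columns being zero), so expanding along that row gives ${\rm per}(M_1)=(-1)\,{\rm per}(M_2)$, and $M_2$ is exactly $B'$. Hence ${\rm per}(B'')={\rm per}(B')\neq 0$, and therefore ${\rm pind}(B_{G'})\leq k={\rm pind}(B_G)$.

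The only genuine work lies in the bookkeeping of the first step: verifying that both $e_1$-lines are single-entry lines (which rests precisely on $v$ being a leaf and $u$ having degree two in $G'$), and checking that adjoining $e_2$ at $w$ does not alter the incidences among the original edges, so that the top-left block is \emph{unchanged} and equals $B_G$. Once this is confirmed, the two expansions are routine, and the multiplicity bound is immediate. I would also dispose of the degenerate case $E(G)=\varnothing$ separately, since there $k=0$ and the statement should be read under the standing convention that the graphs in question carry at least one edge.
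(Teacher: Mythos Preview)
The paper does not prove this lemma; it is quoted from \cite{zhu} and used as a black box. Your argument is a correct self-contained proof: the block description of $B_{G'}$ is accurate under the chosen orientations (in particular, the diagonal entries $B_{G'}[e_i,e_i]$ vanish and the $e_1$-row and $e_1$-column are single-entry lines exactly because $v$ is a leaf and $u$ has degree $2$), and the two Laplace expansions for permanents reduce ${\rm per}(B'')$ to ${\rm per}(B')$. One small remark: besides the edgeless case you flag, you should also note that if ${\rm pind}(B_G)=\infty$ (e.g.\ $G$ contains an isolated edge, making a zero row in $B_G$), the inequality holds vacuously; with that and the observation that $k\geq 1$ whenever $E(G)\neq\varnothing$ and ${\rm pind}(B_G)<\infty$, the multiplicity bound $\max\{k,1\}=k$ is fully justified.
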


\begin{lemma}(\cite{bart})\label{art38}
Let $G=(V, E)$ be a graph such that ${\rm pind}(B_{G})\leq 2$ and $U$ be a nonempty subset of $V(G)$.
Denote by $F$ the graph obtained by adding two new vertices $u, v$ to $G$
and joining them to each vertex of $U$.
Then ${\rm pind}(B_{F})\leq 2$.
\end{lemma}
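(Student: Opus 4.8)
The plan is to work with the standard reformulation of ${\rm pind}(B_F)\le 2$ as the existence of a valid index function $\eta_F$ on the edges of $F$ with $\eta_F(e)\le 2$ for every edge and ${\rm per}(B_F(\eta_F))\neq 0$, where $B_F(\eta_F)$ is the square matrix whose columns are columns of $B_F$, the column indexed by $e$ appearing $\eta_F(e)$ times. Write $U=\{w_1,\dots,w_t\}$ and let $u,v$ be the two new vertices, so that $F$ has the $2t$ new edges $e_i=uw_i$ and $f_i=vw_i$ ($1\le i\le t$). Splitting the rows and columns of $B_F$ into the old part $E(G)$ and the new part $\{e_i,f_i\}$, I would first record the block decomposition
\[
B_F=\begin{pmatrix} B_G & P\\ Q & M\end{pmatrix},
\]
and note two structural facts: on the old rows the columns indexed by $e_i$ and $f_i$ are identical (each is $\pm1$ exactly on the old edges incident to $w_i$, with the same sign), and the new--new block is
\[
M=\begin{pmatrix} I-J & I\\ I & I-J\end{pmatrix},
\]
where $I$ and $J$ are the $t\times t$ identity and all-ones matrices.

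The first genuine step is to show ${\rm per}(M)\neq 0$. In any nonzero term of ${\rm per}(M)$ each $e$-row (resp.\ $f$-row) is sent either to the $f$-column (resp.\ $e$-column) of the same index, contributing $+1$, or to a column on the same side with different index, contributing $-1$. A short counting argument shows that if $x$ of the $e$-rows stay on the $e$-side then exactly $x$ of the $f$-rows stay on the $f$-side, so the number of $-1$ entries used is $2x$, which is even; hence every nonzero term equals $+1$ and ${\rm per}(M)$ equals the positive number of admissible permutations. In particular ${\rm per}(M)\neq 0$.

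Next I would fix, using ${\rm pind}(B_G)\le 2$, a valid index function $\eta$ on $E(G)$ with $\eta(e)\le 2$ and ${\rm per}(B_G(\eta))\neq 0$, and extend it to $\eta_F$ by keeping $\eta$ on $E(G)$ and setting $\eta_F(e_i)=\eta_F(f_i)=1$; this is valid and respects the bound $2$. Expanding ${\rm per}(B_F(\eta_F))$ over the choice of the set $W$ of rows matched to the $2t$ new columns gives a sum of products ${\rm per}(B_F(\eta_F)[W,\text{new}])\cdot{\rm per}(B_F(\eta_F)[W^{c},\text{old}])$. The distinguished term $W=\{e_i,f_i\}$ equals ${\rm per}(M)\cdot{\rm per}(B_G(\eta))$, which is nonzero by the previous step and the choice of $\eta$.

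The main obstacle is precisely the remaining terms of this expansion, in which at least one old edge incident to $U$ is matched to a new column $e_j$ or $f_j$ and, correspondingly, one new edge is matched to an old column; these cross terms must be prevented from cancelling the distinguished term. Here I would exploit the twin symmetry of $u$ and $v$: because the columns $e_j$ and $f_j$ agree on all old rows, one can define a pairing on the cross terms that swaps the roles of $e_j$ and $f_j$ at the offending position, and I expect this pairing to reverse the product of entries on exactly the terms with $W\neq\{e_i,f_i\}$ while fixing the distinguished term, so that all cross terms cancel in pairs. Making this involution precise---verifying that it is well defined, fixed-point-free on the cross terms, and genuinely sign-reversing---is the delicate point, and it is the reason the lemma adjoins two vertices rather than one: a single new vertex would create no partner column to cancel against. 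Once the cancellation is established, ${\rm per}(B_F(\eta_F))={\rm per}(M)\cdot{\rm per}(B_G(\eta))\neq 0$, so $\eta_F$ witnesses ${\rm pind}(B_F)\le 2$.
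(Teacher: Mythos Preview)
Your setup is correct through the block decomposition and the computation of $\mathrm{per}(M)$, but the final step---the claimed sign-reversing involution on the cross terms---fails, and this is a genuine gap rather than a detail to be filled in.

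The problem is that your own structural observation cuts the other way. You note (correctly) that the columns indexed by $e_j$ and $f_j$ agree on every old row. Consequently, any swap of the roles of $e_j$ and $f_j$ that moves an old row between them leaves the corresponding entry unchanged; for the permanent (unlike the determinant) a column transposition carries no global sign, so the swap is sign-\emph{preserving}, not sign-reversing. The paired cross terms therefore add rather than cancel.

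A small example makes this concrete. Take $G=P_3=a\!-\!b\!-\!c$ with edges $g_1=ab$, $g_2=bc$ oriented $a\to b\to c$; then $B_G=\begin{pmatrix}0&1\\-1&0\end{pmatrix}$ and $\mathrm{per}(B_G)=-1$, so $\eta\equiv 1$ works. Let $U=\{b\}$ and add $u,v$ joined to $b$, with $e_1=ub$, $f_1=vb$ oriented into $b$. Then
\[
B_F(\eta_F)=\begin{pmatrix}0&1&1&1\\-1&0&-1&-1\\1&1&0&1\\1&1&1&0\end{pmatrix},
\]
and a direct computation gives $\mathrm{per}(B_F(\eta_F))=-9$, while your distinguished term $\mathrm{per}(M)\cdot\mathrm{per}(B_G(\eta))$ equals $-1$. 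The cross terms contribute $-8$, not $0$; the ``paired'' terms with $W=\{g_1,e_1\}$ and $W=\{g_1,f_1\}$, for instance, are each $-1$.

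So the mechanism you propose cannot close the argument. (Incidentally, the present paper does not prove this lemma at all; it is quoted from \cite{bart}.) The original proof in \cite{bart} does not proceed by cancelling cross terms in a Laplace expansion; it works at the level of the associated polynomial $\prod_{xy\in E}(X_x-X_y)$ and exploits the symmetry between $u$ and $v$ to locate a suitable monomial with nonzero coefficient. If you want to rescue the matrix approach, the example above suggests a different heuristic---that all terms in the expansion share the sign of the distinguished term---but establishing that in general requires a separate argument and is not what your pairing delivers.
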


A \emph{thread} in a graph $G$ is a path $P=(v_{1},v_{2},\ldots,v_{k})$
in $G$ such that $d_{G}(v_{i})=2$ for $i=2,3,\ldots,k-1$. The vertices $v_{1},v_{k}$ need not to be distinct.
If we need to specify the two end vertices of a thread,
then we say $P$ is a $(v_{1}-v_{k})$-thread.
By \emph{deleting a thread} $P=(v_{1},v_{2},\ldots,v_{k})$ from $G$,
we mean deleting the vertices $v_{2}, v_{3},\ldots, v_{k-1}$
(and hence edges incident to them). The \emph{length} of a thread is the number of edges in it. The notations defined in section~2 are followed in this section.

\begin{lemma}(\cite{wong2})\label{art39}
Let $G'$ be obtained from a graph $G$
by deleting a thread of length 4.
If ${\rm pind}(B_{G'})\leq 2$, then ${\rm pind}(B_{G})\leq 2$.
\end{lemma}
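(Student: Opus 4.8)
The plan is to use the characterization that ${\rm pind}(B_G)\le 2$ holds if and only if there is a valid index function $\eta$ on the edge set of $G$ (i.e. $\sum_{e\in E(G)}\eta(e)=m(G)$) with $\eta(e)\le 2$ for every edge $e$ and ${\rm per}(B_{(\eta)})\neq 0$. Starting from such an $\eta'$ for $G'$, I would extend it to $G$ by assigning suitable multiplicities to the four edges of the deleted thread; this is the same mechanism that drives Lemmas~\ref{art37} and~\ref{art38}. Label the thread $v_1\,e_1\,v_2\,e_2\,v_3\,e_3\,v_4\,e_4\,v_5$, so $d_G(v_2)=d_G(v_3)=d_G(v_4)=2$, $e_i=v_iv_{i+1}$, and $E(G)=E(G')\cup\{e_1,e_2,e_3,e_4\}$. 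Fix any orientation of $G$; the choice is immaterial, since reversing an edge $e$ merely negates row $e$ of $B_G$ and hence negates ${\rm per}(B_{(\cdot)})$, so the property ${\rm per}\neq 0$ is orientation-free.

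The key step is the choice $\eta|_{E(G')}=\eta'$, $\eta(e_2)=\eta(e_3)=2$, $\eta(e_1)=\eta(e_4)=0$. Then $\eta$ is valid (its total weight is $m(G')+4=m(G)$) and satisfies $\eta\le 2$, and the claim is that ${\rm per}(B_{(\eta)})$ is a nonzero multiple of ${\rm per}(B_{(\eta')})$. The point is rigidity in the permanent expansion. Because $v_2,v_3$ have degree $2$, the only rows $r$ with $B_G[r,e_2]\neq 0$ are $r\in\{e_1,e_3\}$; as column $e_2$ has multiplicity $2$ in $B_{(\eta)}$, in any nonzero term both copies of column $e_2$ must be matched, which forces rows $e_1$ and $e_3$ onto them. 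Symmetrically (using $d_G(v_3)=d_G(v_4)=2$) the two copies of column $e_3$ must be matched by rows $e_2$ and $e_4$. Hence all four thread rows are matched to the four thread columns, the remaining $m(G')$ rows (which are exactly $E(G')$) are matched to the remaining $m(G')$ columns (exactly the $G'$-columns with multiplicity $\eta'$), and on that block $B_{(\eta)}$ coincides with $B_{(\eta')}$ since no edge of $G'$ is incident to $v_2,v_3,v_4$. Summing, with the factor $4$ counting the two independent transpositions of the two pairs of identical thread columns,
\[
{\rm per}(B_{(\eta)})=4\,B_G[e_1,e_2]\,B_G[e_3,e_2]\,B_G[e_2,e_3]\,B_G[e_4,e_3]\cdot{\rm per}(B_{(\eta')}),
\]
and each $B_G$-entry here is $\pm 1$, so ${\rm per}(B_{(\eta)})\neq 0$. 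Thus $\eta$ witnesses ${\rm pind}(B_G)\le 2$.

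Two remarks round things off. If $v_1=v_5$ (the thread closes into a $C_4$ attached at one vertex), the only effect is to create extra nonzero entries in columns $e_1,e_4$ (absent from $B_{(\eta)}$) and in rows $e_1,e_4$ (already forced into the thread block), so the displayed identity is unaffected; and if $E(G')=\varnothing$ one simply uses that the permanent of the empty matrix is $1$. The main obstacle is precisely the rigidity claim — that no permutation other than the forced thread matching contributes to ${\rm per}(B_{(\eta)})$ — which rests on the degree-$2$ condition along the thread together with the deliberate choice to zero out the two \emph{outer} thread edges so that the two \emph{inner} ones get cornered into the doubled columns; once that is established, the remainder is routine bookkeeping.
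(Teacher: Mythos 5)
The paper states Lemma~\ref{art39} without proof, importing it verbatim from Wong and Zhu \cite{wong2}, so there is no in-paper argument to compare against; your proof is correct and is essentially the argument of the source. The key choices are all right: doubling the two \emph{interior} thread edges and zeroing the two outer ones makes the degree-$2$ condition at $v_2,v_3,v_4$ force the four thread rows onto the four thread columns, so every nonzero term of the permanent splits and ${\rm per}(B_{(\eta)})=\pm 4\,{\rm per}(B_{(\eta')})\neq 0$, and your side remarks (orientation-independence, the case $v_1=v_5$, the empty $G'$) cover the degenerate situations.
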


\begin{theorem}\label{art310}
Let $U\in\mathscr{U}_{n}$, then ${\rm pind}(A_{U})=1$.
Hence $U$ is $(2,2)$-choosable.
\end{theorem}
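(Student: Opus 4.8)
The plan is to prove that every unicyclic graph $U\in\mathscr{U}_n$ satisfies ${\rm pind}(A_U)=1$ by peeling $U$ down to its unique cycle $C_l$ and then handling $C_l$ directly. First I would invoke Lemma~\ref{art34}: since $U$ is unicyclic, it is obtained from the cycle $C_l$ by successively attaching pendant vertices (each such step adds one vertex and one edge). Hence if ${\rm pind}(A_{C_l})=1$, repeated application of Lemma~\ref{art34} gives ${\rm pind}(A_U)=1$, and then $U$ is $(2,2)$-choosable by the criterion of \cite{alon2,wong2} quoted in the introduction. So the whole theorem reduces to the base case $U=C_l$.

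For the base case I would use Lemma~\ref{art33} with $X=\emptyset$ (or a small $X$) to reduce the question to a purely combinatorial statement about orientations, but the cleanest route is probably to exhibit directly a valid index function $\eta'\le\eta$ (where $\eta(e)=1$ for edges, $\eta(v)=2$ for vertices) with ${\rm per}(A_{C_l}(\eta'))\ne0$. Concretely: orient $C_l$ cyclically, so $B_{C_l}$ is (up to signs) the cycle's incidence-type matrix; one computes that the $l\times l$ matrix whose columns are the $l$ edge-columns of $A_{C_l}$ has permanent equal to $1+(-1)^{l+1}$ or similar, which vanishes when $l$ is even. To fix this I would replace one edge-column by a vertex-column: take the $l\times l$ matrix consisting of the $l-1$ edge-columns of a spanning path of $C_l$ together with one vertex-column $A_{C_l}(v)$ for an appropriate vertex $v$ on the cycle. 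For a path the edge-columns already give a triangular-type matrix with permanent $\pm1\ne0$, and adding back the cycle-closing structure through a single vertex column keeps the permanent a sum over matchings that does not cancel. I would verify ${\rm per}\ne0$ by the matching-sum interpretation: ${\rm per}(A_{C_l}(\eta'))$ counts (signed) ``systems of representatives'' and for this choice every nonzero term has the same sign, so no cancellation occurs. Since $\eta'(v)=1\le2=\eta(v)$ and $\eta'(e)=1$, this $\eta'$ is valid and $\le\eta$, giving ${\rm pind}(A_{C_l})=1$.

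An alternative for the base case, which may be slicker, is to apply Lemma~\ref{art33} directly to $G=C_l$ with $X$ a single vertex $v_0$: then $G'=C_l$ (no internal edges deleted), $D$ an acyclic orientation of the path $C_l-v_0$ with $v_0$ a sink, and one chooses $D'$ so that the inequality $\eta(v)+2d^-_{D'}(v)-d^-_D(v)\ge d^+_{D'}(v)$ holds at every vertex; then $G[X]$ is a single vertex, trivially non-singular, so $\eta$ is non-singular for $C_l$, i.e. ${\rm pind}(A_{C_l})=1$. I expect the bookkeeping of in- and out-degrees along the path to be routine.

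The main obstacle is the even-cycle case: when $l$ is even the naive edge-only matrix has permanent zero, so one genuinely needs to use a vertex column (the ``$2$'' in $(2,2)$-choosability), and one must check carefully that the resulting permanent does not again cancel to zero — this sign/non-cancellation verification, whether done by hand on the explicit $l\times l$ matrix or via Lemma~\ref{art33}'s orientation condition, is the only place where real care is needed. Odd cycles should be immediate. Everything after the base case is a mechanical induction via Lemma~\ref{art34}.
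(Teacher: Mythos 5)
Your proposal takes essentially the same route as the paper: reduce to the unique cycle $C_l$ by repeatedly stripping pendant edges via Lemma~\ref{art34}, then settle $C_l$ by Lemma~\ref{art33} with $X$ a single vertex made the sink of an acyclic orientation and $D'$ a single arc into it --- this is exactly your ``alternative'' base case, and the in/out-degree bookkeeping does check out at every vertex. (Your primary base-case route leaves the non-cancellation of the permanent unverified and slightly misstates when the edge-only permanent vanishes --- it is $1+(-1)^l$ plus matching contributions, so it also vanishes for odd $l$ --- but this is moot since the Lemma~\ref{art33} argument you also propose is the paper's.)
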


\begin{proof}
According to Lemma \ref{art34}, it is sufficient to prove this Theorem holds for the unique cycle $C_l$ in $U$.

First we construct an acyclic orientation of $C_l$ as follows:
orient the edges $v_iv_{i+1}(i=1,2, \ldots, l-1)$
from $v_i$ to $v_{i+1}$
and orient the edge $v_1v_{l}$ from $v_1$ to $v_{l}$.
The resulting digraph is denoted by $D$ and $v_l$ is a sink vertex in $D$ obviously. Let $D'$ be a sub-digraph of $D$ consisting of the edge $v_1v_{l}$.
Let $\eta\equiv 1$ be a constant function,
$X=\{v_l\}$ and $\eta'(v_l)=0$ be an index function of $U[X]$.
Because there exist a valid index function $\eta''\leq \eta'$ with ${\rm per}(A_{U[X]}(\eta''))\neq 0$.
Then $\eta'$ is a non-singular index function of $U[X]$.
To prove that ${\rm pind}(A_{U})=1$, i.e.,
$\eta$ is a non-singular index function of $U$,
is suffices, by Lemma \ref{art33},
to show that for each vertex $v$,
\begin{eqnarray*}
1+2d_{D'}^{-}(v)-d_{D}^{-}(v)\geq d_{D'}^{+}(v).
\end{eqnarray*}

We show that every vertex $v$ of $U$ satisfies the above equation and consider three cases.

\textbf{Case 1:} $v=v_1$.

Then $d_{D'}^{-}(v)=d_{D}^{-}(v)$=0, $d_{D'}^{+}(v)=1$.
So $1+2d_{D'}^{-}(v)-d_{D}^{-}(v)=1
\geq d_{D'}^{+}(v)$.

\textbf{Case 2:} $v=v_i(i=2,3,\ldots, l-1)$.

Then $d_{D'}^{-}(v)=d_{D'}^{+}(v)=0, d_{D}^{-}(v)=1$.
So $1+2d_{D'}^{-}(v)-d_{D}^{-}(v)=0
\geq d_{D'}^{+}(v)$.

\textbf{Case 3:} $v=v_l$.

Then
$d_{D'}^{-}(v)=1, d_{D}^{-}(v)=2, d_{D'}^{+}(v)=0$.
So $1+2d_{D'}^{-}(v)-d_{D}^{-}(v)=1
\geq d_{D'}^{+}(v)$.
\end{proof}

\begin{theorem}\label{art311}
Let $U\in\mathscr{U}_{n}$, then ${\rm pind}(B_{U})\leq 2$.
Hence $U$ is $(1,3)$-choosable.
\end{theorem}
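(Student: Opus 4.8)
The plan is to prove the sharper statement ${\rm pind}(B_U)\le 2$ for every $U\in\mathscr{U}_n$; the $(1,3)$-choosability then follows at once from the fact recalled in the Introduction that ${\rm pind}(B_G)\le 2$ implies $G$ is $(1,3)$-choosable. I would argue by strong induction on $n=m(U)$. The base of the induction is the short finite list of small unicyclic graphs that do not reduce under the lemmas at our disposal, namely the cycles $C_3$, $C_4$, $C_5$ and the paw (the triangle with one pendant edge); for each of these one writes down a square matrix $A'$ whose columns are columns of $B_U$, each used at most twice, and checks ${\rm per}(A')\neq 0$ — e.g.\ for the paw, orienting the triangle consistently and using the column of one triangle edge twice while dropping the column of the pendant edge already gives ${\rm per}(A')=\pm 2$.

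For the inductive step, assume $n\ge 6$. If $U=C_n$, then any five consecutive vertices of $C_n$ form a thread of length $4$, and deleting it leaves the path $P_{n-3}$, which has at least two edges; thus ${\rm pind}(B_{P_{n-3}})\le 2$ by Lemma~\ref{art35}, whence ${\rm pind}(B_{C_n})\le 2$ by Lemma~\ref{art39}. If $U$ is not a cycle, fix a leaf $v$ at maximum distance from the cycle $C_l$ and let $u$ be its neighbour; maximality of the distance forces every edge at $u$ other than the one (or, if $u\in C_l$, the two) edges heading toward $C_l$ to end at a leaf, so $u$ carries some $j\ge 1$ pendant leaves. If $j\ge 2$, I would strip off two of them via Lemma~\ref{art38} applied with the one-vertex set $\{u\}$; the result is a connected unicyclic graph with $n-2\ge 4$ edges, to which the induction hypothesis applies. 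If $j=1$, then $d(u)\in\{2,3\}$, so $uv$ is a hanging edge, and deleting $u$ and $v$ by Lemma~\ref{art36} yields either a smaller connected unicyclic graph (when $u\notin C_l$) or, since removing a cycle vertex opens $C_l$ into a path, a connected caterpillar tree with $n-3\ge 3$ edges; in the first case we invoke the induction hypothesis and in the second Lemma~\ref{art35}.

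The routine part of the work is the handful of base-case permanent computations; the only genuine subtlety is the bookkeeping ensuring that every graph produced by a deletion stays non-degenerate — connected and unicyclic with at least three edges, or a tree with at least two edges — so that the induction hypothesis or Lemma~\ref{art35} really applies. The pitfall to avoid is that a careless deletion can split off an isolated edge $K_2$, for which ${\rm pind}(B_{K_2})$ is infinite; choosing the deepest leaf and removing leaves in pairs at a high-degree vertex is exactly what rules this out, while isolated vertices thrown off by a deletion are harmless, contributing neither a row nor a column to $B$. The $n=4$ paw is precisely the smallest configuration where the naive reduction would create a $K_2$, which is why it is included among the base cases instead of being reduced.
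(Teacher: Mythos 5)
Your argument is correct, and it reaches the stronger conclusion ${\rm pind}(B_U)\le 2$ by a route that is organized quite differently from the paper's. The paper splits by the shape of $U$: the bare cycle (for which it simply cites Wong and Zhu), a cycle with stars identified at its vertices (built up from the $s_i=1$ case by repeatedly adding leaf-pairs via Lemma~\ref{art38}), and the general case (an induction on the total number of edges of the hanging trees, reduced two at a time via Lemmas~\ref{art37} and~\ref{art38}). You instead run a single induction on $|E(U)|$ with an explicit finite base, derive the cycle case self-containedly from Lemma~\ref{art39} plus Lemma~\ref{art35} rather than by citation, and replace the paper's pendant-$P_3$ reduction (Lemma~\ref{art37}) by the hanging-edge reduction (Lemma~\ref{art36}); the extra allowance $d(u)=3$ in Lemma~\ref{art36} is what lets you cleanly dispose of a lone pendant at a cycle vertex by dropping into the tree case. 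Your deepest-leaf selection also makes explicit the non-degeneracy bookkeeping (no $K_2$ components, every reduced graph still connected unicyclic or a tree with at least two edges) that the paper's Cases~2 and~3 leave implicit, and your base-case permanents (e.g.\ the value $2$ for the paw) check out. What the paper's organization buys is a statement that tracks exactly which reduction lemma is responsible for which structural feature of $U$; what yours buys is a shorter, uniform induction with the degenerate configurations isolated up front.

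One small repair is needed: as written, your induction step assumes $n\ge 6$, while your base cases stop at $C_5$ and the paw, so the non-cycle unicyclic graphs with exactly five edges ($C_4$ plus a pendant, $C_3$ plus two pendants, $C_3$ plus a pendant path of length two) are covered by neither. This is not a conceptual gap --- only the pure-cycle thread-deletion step genuinely requires $n\ge 6$ (so that $P_{n-3}$ has at least two edges), and your two non-cycle reductions work verbatim for $n=5$, landing on $C_3$, $C_4$, the paw, or a tree with at least two edges. Restating the inductive step as ``$n\ge 5$ and $U$ is not one of the listed base cases'' closes it.
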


\begin{proof}
We consider three cases as follows.

\textbf{Case 1:} $U=C_l$.

Then ${\rm pind}(B_{U})\leq 2$ by Wong and Zhu in \cite{wong2}.
Hence $U$ is $(1,3)$-choosable.

\textbf{Case 2:} $U$ is a graph obtained
by identifying vertex $v_i$ of $C_l$ with
the center of a star $K_{1,s_i}$, where $1\leq i\leq l$.

First consider the case $i=1$. If $s_i=1$, then ${\rm pind}(B_{U})\leq 2$ according to the direct calculation.
Hence $U$ is $(1,3)$-choosable.
Based on the above result and Lemma \ref{art38}, ${\rm pind}(B_{U})\leq 2$ and hence $U$ is $(1,3)$-choosable if $s_i\geq 2$.  The proof of the case $i=2$
is similar to $i=1$ and is thus omitted.
By repeating the above process,
we can prove the above Theorem holds for $i=3, 4,\ldots,l$.

\textbf{Case 3:} $U$ is a graph obtained
by identifying vertex $v_i$ of $C_l$ with
a vertex of a tree $T_{s_i}$, where $1\leq i\leq l$
and at least one $T_{s_i}$ is not a star.

We prove the Theorem by induction on $m'=|\bigcup\limits_{i=1}^{l}E(T_{s_i})|$.
If $m'=1$, then ${\rm pind}(B_{U})\leq 2$ according to Case $2$.
Assume that the above Theorem holds for the number of edges in  the hanging trees
less than $m'(\geq 2)$. Consider the case that $|\bigcup\limits_{i=1}^{l}E(T_{s_i})|=m'$.
By induction hypothesis, the above Theorem holds for $m'-2$ and ${\rm pind}(B_{U})\leq 2$
by Lemmas \ref{art37} and \ref{art38}. Hence $U$ is $(1,3)$-choosable.
\end{proof}

\begin{theorem}\label{art312}
Let $B\in\mathscr{B}_{n}$, then ${\rm pind}(A_{B})=1$.
Hence $B$ is $(2,2)$-choosable.
\end{theorem}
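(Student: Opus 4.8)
The plan is to follow the template of the proof of Theorem~\ref{art310}. First, by the structural description of bicyclic graphs recalled before Lemma~\ref{art29} (see Figure~\ref{fig1}), every $B\in\mathscr{B}_{n}$ is obtained from one of the three base graphs $B_{1}(p,q)$, $B_{2}(p,q,r)$, $B_{3}(p,q,r)$ by repeatedly attaching a new pendant vertex joined to an existing vertex by a new edge, since each hanging tree is built up one edge at a time. By Lemma~\ref{art34} this operation preserves the equality ${\rm pind}(A_{G})=1$, so it suffices to prove ${\rm pind}(A_{B_{i}})=1$ for each base graph; equivalently, taking $\eta\equiv 1$, that $\eta$ is a non-singular index function of $B_{i}$. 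In every case I will apply Lemma~\ref{art33} with $X=\{v^{*}\}$ a single carefully chosen vertex, so that $B_{i}[X]$ is edgeless and hence every index function of $B_{i}[X]$ is non-singular, together with an explicit acyclic orientation $D$ in which $v^{*}$ is a sink and an explicit sub-digraph $D'$.

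For the theta graph $B_{3}(p,q,r)$ let $u$ and $v$ be the two vertices of degree $3$ and orient all edges of the three internally disjoint paths between $u$ and $v$ from $u$ towards $v$; then $v$ is the unique sink and $d_{D}^{-}(v)=3$. Put $X=\{v\}$ and let $D'$ be the edge set of one of these directed paths. Then $d_{D'}^{-}(v)=1$, so $1+2d_{D'}^{-}(v)-d_{D}^{-}(v)=0=d_{D'}^{+}(v)$; at $u$ one has $d_{D'}^{+}(u)=1=1+2d_{D'}^{-}(u)-d_{D}^{-}(u)$; each internal vertex of the chosen path has in-degree and out-degree $1$ in $D'$, while each internal vertex of the other two paths lies off $D'$. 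Hence all the inequalities of Lemma~\ref{art33} hold and $\eta$ is non-singular.

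For $B_{1}(p,q)$, two cycles sharing a single vertex $v_{0}$, orient each cycle so that its two edges at $v_{0}$ point into $v_{0}$ and so that the cycle has a unique interior source whose two directed arcs run along it to $v_{0}$; call these sources $s_{p},s_{q}$. Then $v_{0}$ is the unique sink and $d_{D}^{-}(v_{0})=4$. With $X=\{v_{0}\}$ and $D'$ equal to one full directed arc of $C_p$ from $s_{p}$ to $v_{0}$ together with one full directed arc of $C_q$ from $s_{q}$ to $v_{0}$, one gets $d_{D'}^{-}(v_{0})=2$, hence $1+2\cdot 2-4=1\ge 0=d_{D'}^{+}(v_{0})$; also $d_{D'}^{+}(s_{p})=d_{D'}^{+}(s_{q})=1\le 1$; and every other vertex has degree two with in-degree and out-degree $1$ in $D$, and is either interior to one of the two chosen arcs (in-degree and out-degree $1$ in $D'$, so $1+2-1\ge 1$) or off $D'$ (so $1+0-1\ge 0$). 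For $B_{2}(p,q,r)$, with cycles $C_{p}$ and $C_{q}$ joined by a path $a=x_{0},x_{1},\dots,x_{r}=b$, orient $C_{q}$ so that both of its edges at $b$ point into $b$ and it has a unique interior source $s_{q}$, orient the path as $a\to x_{1}\to\cdots\to b$, and orient $C_{p}$ so that both of its edges at $a$ point into $a$ and it has a unique interior source $s_{p}$; then $b$ is the unique sink with $d_{D}^{-}(b)=3$, while $d_{D}^{-}(a)=2$ and $d_{D}^{+}(a)=1$. Take $X=\{b\}$ and let $D'$ consist of all edges of the path, one directed arc of $C_{q}$ from $s_{q}$ to $b$, and one directed arc of $C_{p}$ from $s_{p}$ to $a$. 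Then $d_{D'}^{-}(b)=2$, so $1+2\cdot 2-3=2\ge 0$; at $a$, $d_{D'}^{-}(a)=d_{D'}^{+}(a)=1$, so $1+2-2=1\ge 1$; at the sources $d_{D'}^{+}(s_{p})=d_{D'}^{+}(s_{q})=1\le 1$; and every remaining degree-two vertex (interior to the path, interior to a chosen arc, or off $D'$) is treated exactly as for $B_{1}$. In each of the three cases Lemma~\ref{art33} then gives that $\eta\equiv 1$ is non-singular for $B_{i}$, so ${\rm pind}(A_{B_{i}})=1$; the pendant-vertex reduction via Lemma~\ref{art34} now yields ${\rm pind}(A_{B})=1$ for all $B\in\mathscr{B}_{n}$, and hence $B$ is $(2,2)$-choosable.

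The main obstacle is $B_{1}$ and $B_{2}$, not the theta graph: in these $D$ has a branch vertex of in-degree $4$ (respectively a sink of in-degree $3$ together with a further vertex $a$ of in-degree $2$), and the inequalities of Lemma~\ref{art33} force several in-arcs of these vertices into $D'$ while still permitting at most one out-arc at each interior source. Reconciling these two demands is exactly what dictates the ``one directed arc per cycle, plus the whole connecting path'' shape of $D'$ above; after that, checking the inequality at every vertex type --- the branch vertices, the interior sources, the interior sinks, and the ordinary degree-two vertices --- is the only real computation, the remainder being the routine bookkeeping of the pendant-vertex reduction.
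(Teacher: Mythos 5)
Your proof is correct and takes essentially the same approach as the paper: reduce to the three base graphs $B_1(p,q)$, $B_2(p,q,r)$, $B_3(p,q,r)$ via the pendant-vertex reduction of Lemma~\ref{art34}, then establish ${\rm pind}(A_{B_i})=1$ by applying Lemma~\ref{art33} with $X$ a single sink vertex of an explicit acyclic orientation $D$ and a suitable sub-digraph $D'$, checking the inequality $1+2d_{D'}^{-}(v)-d_{D}^{-}(v)\geq d_{D'}^{+}(v)$ at each vertex type. Your choices of $D'$ differ only cosmetically from the paper's (full source-to-sink arcs instead of single edges adjacent to the sink, and including the whole connecting path of $B_2$ in $D'$), and all of your verifications are valid.
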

\begin{proof}
According to Lemma \ref{art34},
we only need to prove this Theorem holds for $B_1(p,q)$, $B_2(p,q,\\r)$, $B_3(p,q,r)$.
We consider three cases.

First we construct an acyclic orientation of $B_1(p,q)$ as follows:
For $C_p$, except for the clockwise orientation of edge $v_1u$,
all the other edges are oriented anticlockwise;
For $C_q$, except for the anticlockwise orientation of edge $v_2u$,
all the other edges are oriented clockwise.
The resulting digraph is denoted by $D$. Let $D'$ be a sub-digraph of $D$ consisting of the edges $v_{1}u, v_2u$.
It is easy to see that $u$ is a sink of $D$.
Let $\eta\equiv 1$ be a constant function, $X=\{u\}$ and $\eta'(u)=0$ be an index function of $B[X]$.
Because there exist a valid index function $\eta''\leq \eta'$ with ${\rm per}(A_{B[X]}(\eta''))\neq 0$.
Then $\eta'$ is a non-singular index function of $B[X]$.
To prove that ${\rm pind}(A_{B})=1$, i.e.,
$\eta$ is a non-singular index function of $B$,
is suffices, by Lemma \ref{art33},
to show that for each vertex $v$,
\begin{eqnarray*}
1+2d_{D'}^{-}(v)-d_{D}^{-}(v)\geq d_{D'}^{+}(v).
\end{eqnarray*}

We show that every vertex $v$ of $B$ satisfies the above equation by considering three cases.

\textbf{Case 1:} $v=u$.

Then $d_{D'}^{-}(v)=2, d_{D}^{-}(v)$=4, $d_{D'}^{+}(v)=0$.
So $1+2d_{D'}^{-}(v)-d_{D}^{-}(v)=1
\geq d_{D'}^{+}(v)$.

\textbf{Case 2:} $v= v_1, v_2$.

Then $d_{D'}^{-}(v)=0, d_{D}^{-}(v)=0, d_{D'}^{+}(v)=1$.
So $1+2d_{D'}^{-}(v)-d_{D}^{-}(v)=1
\geq d_{D'}^{+}(v)$.

\textbf{Case 3:} $v\in B\setminus \{u,v_1,v_2\}$.

Then $d_{D'}^{-}(v)=0, d_{D}^{-}(v)=1, d_{D'}^{+}(v)=0$.
So $1+2d_{D'}^{-}(v)-d_{D}^{-}(v)=0
\geq d_{D'}^{+}(v)$.

Secondly, we construct an acyclic orientation of $B_2(p,q,r)$ as follows:
For $C_p$, except for the clockwise orientation of edge $v_3u$,
all the other edges are oriented anticlockwise;
For $C_q$, except for the anticlockwise orientation of edge $v_4v'$,
all the other edges are oriented clockwise;
For $P_r$, orient all edges from right to left.
The resulting digraph is denoted by $D$. Let $D'$ be a sub-digraph of $D$ consisting of edges $v_3u, v_4v'$.
It is easy to see that there $u$ is a sink vertex of $D$.
Let $\eta\equiv 1$ be a constant function, $X=\{u\}$ and $\eta'(u)=0$ be an index function of $B[X]$.
Because there exist a valid index function $\eta''\leq \eta'$ with ${\rm per}(A_{B[X]}(\eta''))\neq 0$.
Then $\eta'$ is a non-singular index function of $B[X]$.
To prove that ${\rm pind}(A_{B})=1$, i.e.,
$\eta$ is a non-singular index function of $B$,
is suffices, by Lemma \ref{art33},
to show that for each vertex $v$,
\begin{eqnarray*}
1+2d_{D'}^{-}(v)-d_{D}^{-}(v)\geq d_{D'}^{+}(v).
\end{eqnarray*}

We show that every vertex of $B$
satisfies the above equation by considering four cases.

\textbf{Case 1:} $v=u$.

Then $d_{D'}^{-}(v)=1, d_{D}^{-}(v)$=3, $d_{D'}^{+}(v)=0$.
So $1+2d_{D'}^{-}(v)-d_{D}^{-}(v)=0
\geq d_{D'}^{+}(v)$.

\textbf{Case 2:} $v=v'$.

Then $d_{D'}^{-}(v)=1, d_{D}^{-}(v)=2, d_{D'}^{+}(v)=0$.
So $1+2d_{D'}^{-}(v)-d_{D}^{-}(v)=1
\geq d_{D'}^{+}(v)$.

\textbf{Case 3:} $v= v_3, v_4$.

Then
$d_{D'}^{-}(v)=0, d_{D}^{-}(v)=0, d_{D'}^{+}(v)=1$.
So $1+2d_{D'}^{-}(v)-d_{D}^{-}(v)=1
\geq d_{D'}^{+}(v)$.

\textbf{Case 4:} $v\in B\setminus\{u,v',v_3,v_4\}$.

Then $d_{D'}^{-}(v)=0, d_{D}^{-}(v)=1, d_{D'}^{+}(v)=0$.
So $1+2d_{D'}^{-}(v)-d_{D}^{-}(v)=0
\geq d_{D'}^{+}(v)$.

Finally, we construct an acyclic orientation of $B_3(p,q,r)$ as follows:
For the cycle consisting of $P_p$, $P_q$ and
the edges $v_5u$, $v_8u$, $v_6v'$, $v_7v'$,
except for the anticlockwise orientation of edges $v_8u$, $v_6v'$,
all the other edges are oriented clockwise;
For $P_r$ and the edges $uy$, $xv'$,
orient all edges from top to bottom.
The resulting digraph is denoted by $D$. Let $D'$ be a sub-digraph of $D$ consisting of the edges $v_8u, v_6v', uy$.
It is easy to see that $u$ is a sink vertex of $D$.
Let $\eta\equiv 1$ be a constant function, $X=\{u\}$ and $\eta'(u)=0$ be an index function of $B[X]$.
Because there exist a valid index function $\eta''\leq \eta'$ with ${\rm per}(A_{B[X]}(\eta''))\neq 0$.
Then $\eta'$ is a non-singular index function of $B[X]$.
To prove that ${\rm pind}(A_{B})=1$, i.e.,
$\eta$ is a non-singular index function of $B$,
is suffices, by Lemma \ref{art33},
to show that for each vertex $v$,
\begin{eqnarray*}
1+2d_{D'}^{-}(v)-d_{D}^{-}(v)\geq d_{D'}^{+}(v).
\end{eqnarray*}

We show that every vertex $v$ of $B$
satisfies the above equation by considering five cases.

\textbf{Case 1:} $v=v_5,v_8$.

Then $d_{D'}^{-}(v)=d_{D}^{-}(v)$=0, $d_{D'}^{+}(v)=1$.
So $1+2d_{D'}^{-}(v)-d_{D}^{-}(v)=1\geq d_{D'}^{+}(v)$.

\textbf{Case 2:} $v=v'$.

Then
$d_{D'}^{-}(v)=1, d_{D}^{-}(v)=3, d_{D'}^{+}(v)=0$.
So $1+2d_{D'}^{-}(v)-d_{D}^{-}(v)=0\geq d_{D'}^{+}(v)$.

\textbf{Case 3:} $v=y$.

Then $d_{D'}^{-}(v)=d_{D}^{-}(v)=1, d_{D'}^{+}(v)=0$.
So $1+2d_{D'}^{-}(v)-d_{D}^{-}(v)=2\geq d_{D'}^{+}(v)$.

\textbf{Case 4:} $v=u$.

Then $d_{D'}^{-}(v)=1, d_{D}^{-}(v)=2, d_{D'}^{+}(v)=1$.
So $1+2d_{D'}^{-}(v)-d_{D}^{-}(v)=1\geq d_{D'}^{+}(v)$.

\textbf{Case 5:} $v\in B\setminus \{v_5,v_8,v',y,u\}$.

Then $d_{D'}^{-}(v)=0, d_{D}^{-}(v)=1, d_{D'}^{+}(v)=0$.
So $1+2d_{D'}^{-}(v)-d_{D}^{-}(v)=0\geq d_{D'}^{+}(v)$.
\end{proof}

\begin{theorem}\label{art313}
Let $B\in\mathscr{B}_{n}$, then ${\rm pind}(B_{B})\leq 2$.
Hence $B$ is $(1,3)$-choosable.
\end{theorem}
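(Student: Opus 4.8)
The plan is to prove $\mathrm{pind}(B_B)\le 2$ for every bicyclic graph $B\in\mathscr{B}_n$ by reducing, via the structural lemmas already established, to the three base graphs $B_1(p,q)$, $B_2(p,q,r)$ and $B_3(p,q,r)$, and then to an even smaller core obtained by shrinking all cycles to their minimal lengths. First I would peel off the hanging trees: repeatedly applying Lemma~\ref{art37} (adding a pendant path of length $2$) and Lemma~\ref{art38} (adding two vertices joined to a common set) shows that if $\mathrm{pind}(B_{B'})\le 2$ for the bicyclic graph $B'$ obtained from $B$ by deleting all the trees hanging on the cycle/path vertices, then $\mathrm{pind}(B_B)\le 2$. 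This mirrors exactly the induction used in the proof of Theorem~\ref{art311}, so the bicyclic case now rests only on the three ``skeletons'' $B_1,B_2,B_3$ (with no hanging trees, only the cycles and, for $B_2,B_3$, the connecting path $P_r$).

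Next I would contract long threads. Since deleting a thread of length $4$ preserves the bound $\mathrm{pind}(B_{\cdot})\le 2$ by Lemma~\ref{art39}, every thread in the skeleton of length $\ge 5$ may be shortened by $4$ repeatedly, reducing $p$, $q$ and $r$ modulo $4$ down to small residues. Thus it suffices to check $\mathrm{pind}(B_{B})\le 2$ for $B_1(p,q)$ with $p,q\in\{1,2,3,4\}$ (small multigraph-type cases, e.g.\ a theta-graph or a pair of small cycles sharing a vertex or a short path), and similarly for $B_2(p,q,r)$ and $B_3(p,q,r)$ with the parameters in bounded ranges; also the case $r=0$ of $B_2$ collapses to $B_1$, and degenerate $p$ or $q$ give small cycles, handled by the cycle result $\mathrm{pind}(B_{C_l})\le 2$ of Wong and Zhu together with Lemma~\ref{art36} on hanging edges. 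For each of these finitely many small graphs one exhibits, by direct computation, a choice of columns of $B_B$ (each column used at most twice) whose square submatrix has nonzero permanent — equivalently, since replacing $-1$ by $1$ in $B_B$ yields $A(L(B))$, one finds a suitable orientation/matching certificate in the line graph.

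The main obstacle I anticipate is the base-case bookkeeping: one must be careful that shortening a thread does not destroy the ``hanging edge'' or ``thread'' hypotheses needed for the next reduction, and that the degenerate parameter values ($p$ or $q$ equal to $1$, i.e.\ loops collapsing, or $r=1$) really do fall under one of Lemmas~\ref{art36}--\ref{art39} or under Theorem~\ref{art311} after an elementary contraction. A clean way to organise this is to treat $B_1$, $B_2$, $B_3$ in turn, in each case first removing hanging trees (Lemmas~\ref{art37}, \ref{art38}), then removing length-$4$ threads (Lemma~\ref{art39}), then removing hanging edges where a cycle has shrunk to a triangle attached by a single vertex (Lemma~\ref{art36}), until what remains is either a single cycle (done by \cite{wong2}) or one of a short explicit list of small bicyclic graphs, for which $\mathrm{per}$ of an appropriate $B'$ is computed by hand. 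Combining these three cases with $\mathscr{B}_n=\mathscr{B}_n^1\cup\mathscr{B}_n^2\cup\mathscr{B}_n^3$ gives $\mathrm{pind}(B_B)\le 2$ for all $B\in\mathscr{B}_n$, and hence $(1,3)$-choosability by the criterion of \cite{alon2,wong2}.
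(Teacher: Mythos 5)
Your overall strategy---strip the hanging trees with Lemmas \ref{art37} and \ref{art38}, reduce the three skeletons $B_1(p,q)$, $B_2(p,q,r)$, $B_3(p,q,r)$ with Lemma \ref{art39}, and finish the few minimal configurations by a direct permanent computation---is exactly the route the paper takes, down to the same division $\mathscr{B}_{n}=\mathscr{B}_n^1\cup\mathscr{B}_n^2\cup\mathscr{B}_n^3$ and the same handful of exceptional skeletons ($B_1(3,3)$, $B_2(3,3,r)$ for $r\in\{2,3,4\}$, $B_3(1,1,1)$, $B_3(1,1,2)$). The one step that does not work as written is the ``mod $4$'' contraction. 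Lemma \ref{art39} does not shorten a thread by $4$: by the paper's own definition, deleting the thread $(v_1,\dots,v_5)$ removes the three \emph{interior} vertices and leaves $v_1$ and $v_5$ non-adjacent, so applying it inside a cycle $C_p$ does not produce $C_{p-4}$---it opens the cycle into a pair of pendant paths. Consequently the claim that it suffices to check $B_1(p,q)$ with $p,q\in\{1,2,3,4\}$ (and the analogous bounded ranges for $B_2$, $B_3$) does not follow; the parameters are not being reduced modulo $4$ by this lemma.

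The repair is easier than what you propose and is what the paper actually does: a \emph{single} application of Lemma \ref{art39} to a length-$4$ sub-thread of any cycle of length at least $4$ destroys that cycle, turning the bicyclic skeleton into a unicyclic graph with hanging trees (or, when applied to the connecting path of $B_2$, into unicyclic and tree components), which is already covered by Theorem \ref{art311} (resp.\ Lemma \ref{art35}); no iteration or residue bookkeeping is needed. Only the skeletons in which both cycles are triangles and the connecting path is too short to contain a length-$4$ thread escape this reduction, and these are precisely the finitely many graphs checked by hand. One further point hiding in your ``base-case bookkeeping'': Lemmas \ref{art37} and \ref{art38} attach vertices and edges in pairs, so a hanging tree with an odd number of edges cannot be grown from the bare skeleton by those two lemmas alone. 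The paper seeds each induction with the single-pendant-edge case via Lemma \ref{art36} or Lemma \ref{art39} before invoking Lemmas \ref{art37} and \ref{art38}; your write-up needs that seeding made explicit, otherwise the tree-stripping induction does not start.
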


\begin{proof}
Due to $\mathscr{B}_{n}=\mathscr{B}_n^1(p,q)
\cup\mathscr{B}_n^2(p,q,r)\cup\mathscr{B}_n^3(p,q,r)$,
we consider three cases:

\textbf{Case 1:} $B\in\mathscr{B}_n^1(p,q)$.

If $B\in\mathscr{B}_n^1(p,q)-\mathscr{B}_n^1(3,3)$, then
we consider three subcases.

\textbf{Subcase 1.1:} $B=B_1(p,q)$.

According to Lemma \ref{art39} and Theorem \ref{art311},
we can obtain that ${\rm pind}(B_{B})\leq 2$.
Hence $B$ is $(1,3)$-choosable.

\textbf{Subcase 1.2:} $B$ is a graph obtained
by identifying vertex $v_i$ of $B_1(p,q)$ and
the center of a star $K_{1,s_i}$.

If there is no star hanging on the vertex $u$.
First  consider the case $i=1$.
According to Lemma \ref{art36} and Theorem \ref{art311},  ${\rm pind}(B_{B})\leq 2$ if $s_i=1$ and hence $B$ is $(1,3)$-choosable.
Based on the above result and Lemma \ref{art38}, ${\rm pind}(B_{U})\leq 2$ and
hence $B$ is $(1,3)$-choosable if $s_i\geq 2$.
The proof of the case $i=2$
is similar to $i=1$ and is thus omitted.
By repeating the above process,
we can prove the above Theorem holds for all $i\geq 3$.
So, ${\rm pind}(B_{B})\leq 2$,
hence $B$ is $(1,3)$-choosable.

Assume that there exists a star hanging on the vertex $u$.
First we consider the case that hanging stars exist only on vertex $u$.
According to Lemma \ref{art39} and Theorem \ref{art311},
we obtain that if $s_i=1$, then ${\rm pind}(B_{B})\leq 2$ and
hence $B$ is $(1,3)$-choosable.
Based on the above result and Lemma \ref{art38}, ${\rm pind}(B_{B})\leq 2$ and hence $B$ is $(1,3)$-choosable if $s_i\geq 2$.
The proof of the cases that hanging trees exist on other vertices
is quite similar to that on $u$ and is thus omitted.
By repeating the above process,
we can prove the above Theorem holds for other vertices.
So, ${\rm pind}(B_{B})\leq 2$,
hence $B$ is $(1,3)$-choosable.

\textbf{Subcase 1.3:} $B$ is a graph obtained
by identifying vertex $v_i$ of $B_1(p,q)$ and
any vertex of a tree $T_{s_i}$, where $1\leq i\leq p+q-1$
and at least one $T_{s_i}$ is not a star.

We prove the Theorem by induction on $m'$,
which is the number of edges of the hanging trees.
If $m'=1$, ${\rm pind}(B_{B})\leq 2$ according to Case $1.2$.
Assume that the theorem holds if the number of edges in the hanging trees less than $m'$.
If the number of edges of the hanging trees is $m'$,
then the above Theorem holds for $m'-2$ by induction hypothesis and ${\rm pind}(B_{B})\leq 2$ by Lemmas \ref{art37} and \ref{art38}. Hence $B$ is $(1,3)$-choosable.

Assume that $B\in\mathscr{B}_n^1(3,3)$. By direct calculation, ${\rm pind}(B_{B_1(3,3)})\leq 2$. The proof of ${\rm pind}(B_{B})\leq 2$ is quite similar to the case $B\in\mathscr{B}_n^1(p,q)-\mathscr{B}_n^1(3,3)$
and is thus omitted. Hence $B$ is $(1,3)$-choosable.

\textbf{Case 2:} $B\in\mathscr{B}_n^2(p,q,r)$.

If $B\in\mathscr{B}_n^2(p,q,r)-\mathscr{B}_n^2(3,3,2)
-\mathscr{B}_n^2(3,3,3)-\mathscr{B}_n^2(3,3,4)$,
then consider three subcases.

\textbf{Subcase 2.1:} $B=B_2(p,q,r)$.

According to Lemma \ref{art39} and Theorem \ref{art311},
we can obtain that ${\rm pind}(B_{B})\leq 2$ and
hence $B$ is $(1,3)$-choosable.

\textbf{Subcase 2.2:} $B$ is a graph obtained
by identifying  vertex $v_i$ of $B_2(p,q,r)$ with
the center of a star $K_{1,s_i}$.

If there is no star hanging on vertices $u,v'$.
First we consider the case $i=1$.
According to Lemma \ref{art36} and Theorem \ref{art311},
we obtain that ${\rm pind}(B_{B})\leq 2$ if $s_i=1$ and
hence $B$ is $(1,3)$-choosable.
Based on the above result and Lemma \ref{art38},
 then ${\rm pind}(B_{U})\leq 2$ and
hence $B$ is $(1,3)$-choosable if $s_i\geq 2$.
The proof of case $i=2$
is quite similar to $i=1$ and is thus omitted.
By repeating the above process,
we can prove the above Theorem holds for $i\geq 3$.
So, ${\rm pind}(B_{B})\leq 2$ and
hence $B$ is $(1,3)$-choosable.

Without loss of generality, assume there has a star hanging on vertex $u$
and there is no star hanging on vertices $v'$ .
First we consider the case only hanging star on vertex $u$.
According to Lemma \ref{art39} and Theorem \ref{art311},
we obtain that if $s_i=1$, then ${\rm pind}(B_{B})\leq 2$,
hence $B$ is $(1,3)$-choosable.
Based on the above result and Lemma \ref{art38},
we have if $s_i$ is odd, then ${\rm pind}(B_{B})\leq 2$,
hence $B$ is $(1,3)$-choosable;
by Lemma \ref{art38}, we obtain that if $s_i$ is even,
then ${\rm pind}(B_{B})\leq 2$,hence $B$ is $(1,3)$-choosable.
Hence, if $i=1$, then ${\rm pind}(B_{B})\leq 2$,
so $B$ is $(1,3)$-choosable.
The proof of the cases of other vertices
is quite similar to $u$ and is thus omitted.
By repeating the above process,
we can prove the above Theorem holds for other vertices.
So, ${\rm pind}(B_{B})\leq 2$,
hence $B$ is $(1,3)$-choosable.

Assume that there exists a star hanging on vertices $u$ and $v'$.
First we consider the case that there is a hanging star on just one of the vertices $u$ and $v'$. Without loss of generality, we assume the vertex to be $u$.
According to Lemma \ref{art39} and Theorem \ref{art311}, ${\rm pind}(B_{B})\leq 2$ if $s_i=1$ and hence $B$ is $(1,3)$-choosable.
Based on the above result and Lemma \ref{art38}, ${\rm pind}(B_{B})\leq 2$ and
hence $B$ is $(1,3)$-choosable if $s_i\geq 2$. Now consider the case that there are hanging stars on both of the vertices $u$ and $v'$. Similar to the proof in the case of $u$,
we can prove the theorem holds for $v'$ and other vertices and the details are omitted.
So, ${\rm pind}(B_{B})\leq 2$,
hence $B$ is $(1,3)$-choosable.

\textbf{Subcase 2.3:} $B$ is a graph obtained
by identifying  vertex $v_i$ of $B_2(p,q,r)$ with
the vertex of $T_{s_i}$, where $1\leq i\leq p+q+r-2$
and at least one $T_{s_i}$ is not a star.

We prove the Theorem by induction on $m'$,
which is the number of edges of the hanging trees.
If $m'=1$, ${\rm pind}(B_{B})\leq 2$ according to Case $2.2$.
Assume that the above Theorem holds if the number of edges in the hanging trees
less than $m'$. Consider the case that the number of edges of the hanging trees is $m'$.
By induction hypothesis, the theorem holds for $m'-2$.
By Lemmas \ref{art37} and \ref{art38}, ${\rm pind}(B_{B})\leq 2$ and
hence $B$ is $(1,3)$-choosable.

Assume that $B\in\mathscr{B}_n^2(3,3,2)
\cup\mathscr{B}_n^2(3,3,3)\cup\mathscr{B}_n^2(3,3,4)$.
By direct calculating,
we can obtain that ${\rm pind}(B_{B_2(3,3,2)})\leq 2$, ${\rm pind}(B_{B_2(3,3,3)})\leq 2$, ${\rm pind}(B_{B_2(3,3,4)})\leq 2$. The proof of ${\rm pind}(B_{B})\leq 2$ is similar to the case that $B\in\mathscr{B}_n^2(p,q,r)-\mathscr{B}_n^2(3,3,2)
-\mathscr{B}_n^2(3,3,3)-\mathscr{B}_n^2(3,3,4)$
and is thus omitted. So, ${\rm pind}(B_{B})\leq 2$,
hence $B$ is $(1,3)$-choosable.

\textbf{Case 3:} $B\in\mathscr{B}_n^3(p,q,r)$.

If $B\in\mathscr{B}_n^3(p,q,r)-B_n^3(1,1,1)-B_n^3(1,1,2)$,
then consider three cases.

\textbf{Subcase 3.1:} $B=B_3(p,q,r)$.

According to Lemma \ref{art39} and Theorem \ref{art311},
we can obtain that ${\rm pind}(B_{B})\leq 2$ and
hence $B$ is $(1,3)$-choosable.

\textbf{Subcase 3.2:} $B$ is a graph obtained
by identifying  vertex $v_i$ of $B_3(p,q,r)$ and
the center of a star $K_{1,s_i}$.

If there is no star hanging on vertices $u,v'$.
First consider the case $i=1$.
According to Lemma \ref{art36} and Theorem \ref{art311}, ${\rm pind}(B_{B})\leq 2$ if $s_i=1$ and hence $B$ is $(1,3)$-choosable.
Based on the above result and Lemma \ref{art38},
${\rm pind}(B_{B})\leq 2$ and
hence $B$ is $(1,3)$-choosable if $s_i\geq 2$.
The proof of the case $i=2$
is similar to $i=1$ and is thus omitted.
By repeating the above process,
we can prove the theorem holds for $i\geq 3$.
So, ${\rm pind}(B_{B})\leq 2$ and
hence $B$ is $(1,3)$-choosable.

Without loss of generality, assume there has a star hanging on vertex $u$
and there is no star hanging on vertices $v'$ .
First we consider the case only hanging star on vertex $u$.
According to Lemma \ref{art39} and Theorem \ref{art311},
we obtain that if $s_i=1$, then ${\rm pind}(B_{B})\leq 2$,
hence $B$ is $(1,3)$-choosable.
Based on the above result and Lemma \ref{art38},
we have if $s_i$ is odd, then ${\rm pind}(B_{B})\leq 2$,
hence $B$ is $(1,3)$-choosable;
by Lemma \ref{art38}, we obtain that if $s_i$ is even,
then ${\rm pind}(B_{B})\leq 2$,hence $B$ is $(1,3)$-choosable.
Hence, if $i=1$, then ${\rm pind}(B_{B})\leq 2$,
so $B$ is $(1,3)$-choosable.
Based on the above Theorem holds in the case of $u$,
we can prove the above Theorem holds for other vertices.
The proof of the cases of other vertices
is quite similar to $u$ and is thus omitted.
By repeating the above process,
we can prove the above Theorem holds for other vertices.
So, ${\rm pind}(B_{B})\leq 2$,
hence $B$ is $(1,3)$-choosable.

Assume that there exists a star hanging on vertices $u$ and $v'$.
First we consider the case that there is a hanging star on just one of the vertices $u$ and $v'$. Without loss of generality, we assume the vertex to be $u$.
According to Lemma \ref{art39} and Theorem \ref{art311}, ${\rm pind}(B_{B})\leq 2$ if $s_i=1$ and hence $B$ is $(1,3)$-choosable.
Based on the above result and Lemma \ref{art38}, ${\rm pind}(B_{B})\leq 2$ and
hence $B$ is $(1,3)$-choosable if $s_i\geq 2$. Now consider the case that there are hanging stars on both of the vertices $u$ and $v$. Similar to the proof in the case of $u$,
we can prove the theorem holds for $v$ and other vertices and the details are omitted.
So, ${\rm pind}(B_{B})\leq 2$,
hence $B$ is $(1,3)$-choosable.

\textbf{Subcase 3.3:} $B$ is a graph obtained
by identifying vertex $v_i$ of $B_3(p,q,r)$ and
the vertex of a star $T_{s_i}$, where $1\leq i\leq p+q+r+2$
and at least one $T_{s_i}$ is not a star.

We prove the theorem by induction on $m'$,
which is the number of edges of the hanging trees.
If $m'=1$, then
${\rm pind}(B_{B})\leq 2$ according to Case $3.2$.
Assume that the theorem holds if the number of edges in the hanging trees less than $m'$.
If the number of edges of the hanging trees is $m'$, the theorem holds for $m'-2$ by induction hypothesis. By Lemmas \ref{art37} and \ref{art38}, ${\rm pind}(B_{B})\leq 2$ and hence $B$ is $(1,3)$-choosable.

Assume that $B\in\mathscr{B}_n^3(1,1,1)
\cup\mathscr{B}_n^3(1,1,2)$.
By direct calculating, ${\rm pind}(B_{B_3(1,1,1)})\leq 2$, ${\rm pind}(B_{B_3(1,1,2)})\leq 2$.
The proof of ${\rm pind}(B_{B})\leq 2$ is similar to the case
$B\in\mathscr{B}_n^3(p,q,r)-B_n^3(1,1,1)-B_n^3(1,1,2)$
and is thus omitted. Hence $B$ is $(1,3)$-choosable.

The proof of the theorem is complete.
\end{proof}

\begin{remark}
For graph $B_1(3,3)$, we are clockwise oriented for the two $C_3$ in it,
then we have matrix $B_{B_1(3,3)}$.
For $B_{B_1(3,3)}$, we select the first column twice, the second column twice and the firth column twice,
then we form a new matrix $B$. The matrices $B_{B_1(3,3)}$ and $B$ are depicted as follows:
$$
 \begin{gathered}
 \setlength{\arraycolsep}{3pt}
B_{B_1(3,3)}=
\begin{bmatrix}
 0& 1 &-1&0& 0 &0\\
 -1& 0 &1&1& 1 &0\\
 1& -1 &0&-1& -1 &0\\
 0& -1 &-1&0& -1&1\\
 0& 1 &1&1& 0 &-1\\
 0& 0 &0&-1& 1 &0
 \end{bmatrix},
 \end{gathered}
 \begin{gathered}
 \setlength{\arraycolsep}{3pt}
B=
\begin{bmatrix}
 0&0 &1&1& 0 &0\\
 -1&-1 &0&0& 1 &1\\
 1& 1 &-1&-1& -1 &-1\\
 0& 0 &-1&-1& 0&0\\
 0& 0 &1&1& 1 &1\\
 0& 0 &0&0& -1 &-1
 \end{bmatrix}
 \end{gathered}
$$
By direct calculation, we have ${\rm per}(B_{B_1(3,3)})=-8\neq 0$.
According to the definition of permanent index of $B_G$,
then ${\rm pind}(B_{B_1(3,3)})\leq 2$.
\end{remark}

\section{Total weight choosability of some graphs under
some graph decorations}
In this section, we prove that
some graphs obtained by some graph operations are $(2,2)$-choosable.
At first, we give the definitions of some graph operations of a connected graph $G$
as follows.

\begin{itemize}
 \item $L(G)$: The vertices of $L(G)$ are the edges of $G$.
  Two edges of $G$ that share a vertex are considered to be
  adjacent in $L(G)$.
  \item $R(G)$: $R(G)$ is obtained from $G$ by adding $|E(G)|$ new vertices and joining each of them to the endvertices of exactly one edge in $E(G)$.
  \item $Q(G)$: $Q(G)$ is obtained from $G$ by inserting a new vertex into each
  edge of $G$, then joining those pairs of new vertices on adjacent edges of $G$ with edges.
\end{itemize}

In order to obtain the main theorems, we present a lemma and a theorem as follows.

\begin{lemma}(\cite{zhu})\label{art41}
Assume that $A$ is an $n\times m$ matrix
and $L$ is an $n\times n$ matrix
whose columns are linear combinations of the columns of $A$.
Let the $j$th column of $A$ be present in $n_j$
such linear combinations (with non-zero coefficients).
Then there is an index function $\eta$:
$\{1,2,\ldots,m\}\rightarrow \{0,1,\ldots\}$
such that $\eta(j)\leq n_j$ and ${\rm per}(A{(\eta)})\neq 0$.
\end{lemma}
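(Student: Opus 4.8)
The plan is to use the fact that the permanent of a square matrix is multilinear and fully symmetric in its columns, and to expand ${\rm per}(L)$ completely in terms of the columns $a_1,\ldots,a_m$ of $A$. Write $\ell_1,\ldots,\ell_n$ for the columns of $L$ and fix, for each $k$, an expression $\ell_k=\sum_{j=1}^{m}c_{jk}a_j$; by hypothesis the number of indices $k$ with $c_{jk}\neq 0$ is exactly $n_j$. Applying linearity of the permanent one column at a time gives
\[
{\rm per}(L)=\sum_{(j_1,\ldots,j_n)\in\{1,\ldots,m\}^{n}}\left(\prod_{k=1}^{n}c_{j_k,k}\right){\rm per}(a_{j_1},\ldots,a_{j_n}).
\]

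Next I would re-read the inner permanents as permanents of the matrices $A(\eta)$. For a tuple $\mathbf{j}=(j_1,\ldots,j_n)$ set $\eta_{\mathbf{j}}(j)=|\{k:j_k=j\}|$; this is an index function $\{1,\ldots,m\}\to\{0,1,\ldots\}$ with $\sum_j\eta_{\mathbf{j}}(j)=n$, and because the permanent is invariant under permuting columns we have ${\rm per}(a_{j_1},\ldots,a_{j_n})={\rm per}(A(\eta_{\mathbf{j}}))$, where $A(\eta_{\mathbf{j}})$ is the $n\times n$ matrix built from $A$ by taking each column $a_j$ with multiplicity $\eta_{\mathbf{j}}(j)$. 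Since ${\rm per}(L)\neq 0$, at least one summand in the identity above is nonzero, so I may fix a tuple $\mathbf{j}$ with both $\prod_k c_{j_k,k}\neq 0$ and ${\rm per}(A(\eta_{\mathbf{j}}))\neq 0$. For such a tuple $c_{j_k,k}\neq 0$ for every $k$, so for each fixed $j$ every index $k$ contributing to $\eta_{\mathbf{j}}(j)$ (that is, with $j_k=j$) satisfies $c_{jk}\neq 0$; hence $\eta_{\mathbf{j}}(j)\leq|\{k:c_{jk}\neq 0\}|=n_j$. Taking $\eta=\eta_{\mathbf{j}}$ finishes the argument, since then $\eta(j)\leq n_j$ for all $j$ and ${\rm per}(A(\eta))\neq 0$.

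I do not expect a genuine obstacle here: the whole argument is a careful unravelling of multilinearity, and the only substantive input is the non-vanishing of ${\rm per}(L)$, which is what produces a surviving term in the expansion (and without which the conclusion fails, e.g.\ for $L=0$). The one point that must be stated explicitly, rather than glossed over, is that the permanent — unlike the determinant — is completely symmetric in the columns, so ${\rm per}(A(\eta))$ does not depend on the order in which the repeated columns of $A$ are listed; this is exactly why only the multiset recorded by $\eta_{\mathbf{j}}$, and not the ordered tuple $\mathbf{j}$ itself, is what matters.
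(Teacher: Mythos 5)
Your proof is correct, and the expansion of ${\rm per}(L)$ by multilinearity of the permanent in the columns, followed by extracting a surviving term and reading off the column multiplicities as the index function $\eta$, is precisely the standard argument; the paper itself offers no proof of this lemma, citing it from \cite{zhu}, so there is nothing to diverge from. One point worth recording: as transcribed here the statement omits the hypothesis ${\rm per}(L)\neq 0$, which you correctly identify as indispensable (the conclusion fails for $L=0$) and rightly supply in your argument.
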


\begin{theorem}\label{art42}
Let $G$ be a connected graph,
and $G'$ be a graph obtained
by identifying a vertex of  $G$ with a vertex of  $K_3$.
If ${\rm pind}(A_{G})=1$, then ${\rm pind}(A_{G'})=1$
and hence $G'$ is $(2,2)$-choosable.
\end{theorem}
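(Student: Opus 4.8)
The plan is to verify that the constant index function $\eta\equiv 1$ on $G'$ is non-singular, via a single application of Lemma~\ref{art33}; this gives $\mathrm{pind}(A_{G'})=1$ (the inequality $\mathrm{pind}(A_{G'})\ge 1$ being automatic since $G'$ has edges), and then $G'$ is $(2,2)$-choosable by the sufficient condition $\mathrm{pind}(A_H)=1\Rightarrow H$ is $(2,2)$-choosable recalled in Section~1. Write $v_0$ for the vertex of $G$ that is identified with a vertex of $K_3$, and let $v_1,v_2$ be the other two vertices of the triangle, so the three new edges are $v_0v_1$, $v_0v_2$ and $v_1v_2$.

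To set up Lemma~\ref{art33}, take $X=V(G)\subseteq V(G')$, so that $G'[X]=G$, and let $H=G'-E[X]$ be the graph obtained from $G'$ by deleting all edges of $G$. Then $H$ is the triangle on $\{v_0,v_1,v_2\}$ together with $|V(G)|-1$ isolated vertices. Orient the three triangle edges by $v_1\to v_0$, $v_2\to v_0$ and $v_2\to v_1$, obtaining an acyclic digraph $D$ on $V(G')$ in which every vertex of $X$ is a sink (the only one with positive degree, $v_0$, has out-degree $0$). Here $d_D^-(v_0)=2$, $d_D^-(v_1)=1$, $d_D^-(v_2)=0$, and all remaining in- and out-degrees in $D$ vanish.

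Now take $D'$ to be the sub-digraph of $D$ consisting of the two arcs $v_2\to v_1$ and $v_1\to v_0$, and check the hypothesis $\eta(v)+2d_{D'}^-(v)-d_D^-(v)\ge d_{D'}^+(v)$ of Lemma~\ref{art33} vertex by vertex: at $v_0$ it reads $1+2-2\ge 0$, at $v_1$ it reads $1+2-1\ge 1$, at $v_2$ it reads $1+0-0\ge 1$, and at every other vertex it is $1\ge 0$. The induced index function $\eta'$ on $G[X]=G$ then has $\eta'(e)=1$ on every edge, $\eta'(v_0)=1+2-2=1$, and $\eta'(v)=1$ for all other $v\in V(G)$; that is, $\eta'\equiv 1$. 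Since $\mathrm{pind}(A_G)=1$, the all-ones index function is non-singular for $G$, so $\eta'$ is non-singular for $G[X]$, and Lemma~\ref{art33} yields that $\eta\equiv 1$ is non-singular for $G'$, i.e.\ $\mathrm{pind}(A_{G'})=1$, whence $G'$ is $(2,2)$-choosable.

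The one delicate point — essentially the whole content of the argument — is the choice of $D'$. The naive candidates fail: if $D'=D$ the inequality is violated at $v_2$ (it would read $1\ge 2$), and if $D'$ is taken to be the two arcs entering $v_0$ it is violated at $v_1$ (it would read $0\ge 1$); moreover one must keep $\eta'(v_0)\ge 1$ so that non-singularity of the all-ones function on $G$ transfers unchanged. Orienting the triangle as the ``near-path'' $v_2\to v_1\to v_0$ and letting $D'$ be exactly that path is what makes all three local inequalities hold simultaneously while leaving $\eta'$ identically $1$ on $G$; the rest is then a routine invocation of the two cited lemmas.
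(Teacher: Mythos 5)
Your proof is correct, but it takes a genuinely different route from the one in the paper. The paper argues at the level of matrices: from ${\rm pind}(A_G)=1$ it extracts an $m\times m$ submatrix $B'$ of $A_G$ with ${\rm per}(B')\neq 0$, augments it by the three rows of the new triangle edges and three new columns (namely $A_{v_2}$, $A_{v_3}$ and $A_{e_{31}-e_{21}}$) to get a block matrix $B''$ with ${\rm per}(B'')={\rm per}(B')\,{\rm per}(C)$ for an explicit $3\times 3$ block $C$ with ${\rm per}(C)\neq 0$, and then invokes Lemma~\ref{art41} (the linear-combination-of-columns lemma) to convert this into ${\rm pind}(A_{G'})=1$. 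You instead apply the orientation/sink reduction of Lemma~\ref{art33} with $X=V(G)$, orienting the attached triangle as a transitive tournament with the identified vertex $v_0$ as sink and choosing $D'$ to be the directed path $v_2\to v_1\to v_0$; your degree bookkeeping is accurate (in particular $d_{D'}^-(v_0)=1$, so $\eta'(v_0)=1+2-2=1$ and $\eta'\equiv 1$ on $G$, which is exactly what ${\rm pind}(A_G)=1$ supplies), and your observation that the two naive choices of $D'$ fail is also right. The trade-off: your argument is shorter and reuses the machinery the paper already deploys in Section~3 for unicyclic and bicyclic graphs, so it unifies Theorem~\ref{art42} with Theorems~\ref{art310} and~\ref{art312}; the paper's matrix construction is more computational but makes explicit which columns of $A_{G'}$ realize the nonzero permanent, and its use of a column of the form $A_{e_{31}}-A_{e_{21}}$ together with Lemma~\ref{art41} is a technique that does not reduce to the sink lemma. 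Both proofs are complete; no gap in yours.
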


\begin{proof}
Assume that $G$ is a graph with $n$ vertices, $m$ edges
and there exists an orientation of $G$ such that ${\rm pind}(A_{G})=1$.
By the definition of $A_{G}$,
$A_{G}$ is an $m\times(m+n)$ matrix.
Therefore, according to the definition of permanent index of $A_{G}$
and the assumption,
$A_{G}$ has an $m\times m$ submatrix $B'$
such that ${\rm per}(B')\neq 0$
and each column of $B'$ is a column of $A_{G}$,
each column of $A_{G}$ occurs in $B'$ at most once.

For $K_3$, the vertices are $v_1,v_2,v_3$,
and the edges are $e_{ij}=v_iv_j$ for $1\leq j< i\leq 3$.
According to the assumption of $G'$,
the new added edges of $G'$ are the edges $e_{32}, e_{31}, e_{21}$.
Firstly, we construct an orientation of the new added edges of $G'$.
For $j<i$, we orient the edge $e_{ij}$ from $v_i$ to $v_j$.

According to the assumption of $G'$ and the definition of $A_{G'}$,
$A_{G}$ is an $(m+3)\times(m+n+3)$ matrix.
Next, we construct an $(m+3)\times(m+3)$ submatrix $B''$ of $A_{G'}$.
Let $B''$ be obtained from $B'$ by adding the three rows
$e_{21}, e_{32}, e_{31}$. and the following three columns:
$A_{v_2},A_{v_3},A_{e_{31}-e_{21}}$.
The matrix $B''$ is depicted as follows.
$$
 \begin{gathered}
 \setlength{\arraycolsep}{5pt}
 B''=\begin{matrix}
 \left[
 \begin{array}{c|c}
 B'& 0\\
 \hline
 A& C\\
 \end{array}
 \right],
 \end{matrix}
 \end{gathered}
$$
where
$$
 \begin{gathered}
 \setlength{\arraycolsep}{5pt}
C=
\begin{bmatrix}
 1& -1 &-2\\
 0& -1 &-1\\
 -1& 0 &1\\
 \end{bmatrix}.
 \end{gathered}
$$
Obviously, ${\rm per}(B'')={\rm per}(B'){\rm per}(C)$.
By direct calculation, we have ${\rm per}(C)\neq 0$.
Therefore, according to the assumption of ${\rm per}(B')\neq 0$,
we have ${\rm per}(B'')\neq 0$.
Since each column of $B''$ is linear combination of columns of $A_{G'}$
and each column of $A_{G}$ occurs once in such linear combinations.
By Lemma \ref{art41} and the definition of permanent of $A_{G'}$,
we obtain that ${\rm pind}(A_{G'})=1$.
\end{proof}

\begin{theorem}\label{art43}
Let $G$ be a connected graph
and $G'$ be a graph obtained
by identifying a vertex of $G$ and a vertex of $K_n$.
If $G$ is $(2,2)$-choosable, then $G'$ is $(2,2)$-choosable.
\end{theorem}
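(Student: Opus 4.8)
The plan is to induct on $n$. Write $v_1$ for the identified vertex and $v_2,\dots,v_n$ for the remaining vertices of the attached $K_n$; then $G'=G\cup_{v_1}K_n$ arises from $G\cup_{v_1}K_{n-1}$ by adjoining one new vertex $v_n$ together with the $n-1$ edges from $v_n$ to the clique $\{v_1,\dots,v_{n-1}\}$. The cases $n=1$ (where $G'=G$) and $n=2$ (where $G'$ is $G$ with one extra pendant edge, covered by Lemma~\ref{art34}) are immediate, so the theorem reduces to the claim: \emph{if $H$ is $(2,2)$-choosable and $Q=\{u_1,\dots,u_k\}$ is a clique of $H$, then $H^{+}:=H+v$ with $N(v)=Q$ is $(2,2)$-choosable.} Note that Theorem~\ref{art42} cannot simply be invoked: it derives ${\rm pind}(A_{G'})=1$ from the stronger hypothesis ${\rm pind}(A_G)=1$, whereas here we know only that $G$ is $(2,2)$-choosable, so a direct construction of a proper weighting is needed.

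To prove the claim, fix a total list assignment $L$ on $H^{+}$ with every list of size $2$. First I would commit to the values at $v$: choose $\gamma_i\in L(vu_i)$ for $1\le i\le k$ and $\beta\in L(v)$, and set $T=\beta+\gamma_1+\cdots+\gamma_k$. Committing first allows the new edges to be absorbed into a shifted list assignment on $H$: put $L'(u_i)=\{\,c+\gamma_i:c\in L(u_i)\,\}$ for $1\le i\le k$ and $L'(z)=L(z)$ for all other $z$. Since $H$ is $(2,2)$-choosable there is a proper $L'$-total weighting $\psi$ of $H$; from it define a weighting of $H^{+}$ by keeping all edge values of $\psi$, assigning $\gamma_i$ to $vu_i$ and $\beta$ to $v$, and replacing $\psi(u_i)$ by $\psi(u_i)-\gamma_i\in L(u_i)$ for each $i$. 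A direct check shows that in $H^{+}$ every vertex $w$ of $H$ keeps the weighted degree $s_\psi(w)$ it had under $\psi$, while $v$ has weighted degree $T$. Hence every adjacency constraint lying inside $H$ holds for \emph{any} choice of the $\gamma_i$, and, because $Q$ is a clique, $s_\psi(u_1),\dots,s_\psi(u_k)$ are pairwise distinct.

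The only surviving constraints are those of the new edges, namely $T\neq s_\psi(u_i)$ for $1\le i\le k$. This is the crux and the main obstacle: $(2,2)$-choosability of $H$ gives no control over where the $k$ distinct values $s_\psi(u_i)$ lie. The plan is to exploit the freedom not yet used — the $k+1$ two-way choices $\gamma_1,\dots,\gamma_k,\beta$, with the essential feature that re-choosing a $\gamma_i$ alters the shifted list $L'$ and hence the weighting $\psi$ one obtains — and to show, by running over all $2^{k+1}$ commitments together with a counting/transversal argument (equivalently, a non-vanishing-permanent argument for the clique $\{v\}\cup Q$, in the spirit of Lemma~\ref{art41} and Theorem~\ref{art42}), that some commitment steers $T$ clear of all $k$ forbidden values at once. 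Carefully controlling the interaction between the committed values at $v$ and the resulting weighting $\psi$ — where the clique hypothesis on $Q$, which keeps the $k$ forbidden values distinct, is what makes the argument go through — is what I expect to be the technical heart of the proof.
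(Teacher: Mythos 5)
Your setup coincides with the paper's: both induct on $n$, reduce to attaching a single new vertex $v$ to a clique $Q=\{u_1,\dots,u_k\}$ of a $(2,2)$-choosable graph $H$, absorb chosen weights of the new edges into shifted lists on the $u_i$, and invoke choosability of $H$ to dispose of every constraint except $s(v)\neq s(u_i)$. The genuine gap is the step you yourself flag as the crux: you never prove that some commitment $(\beta,\gamma_1,\dots,\gamma_k)$ makes $T=\beta+\sum_i\gamma_i$ avoid all $k$ values $s_\psi(u_i)$. The proposed fix --- running over all $2^{k+1}$ commitments with a counting/transversal argument --- does not go through as described, because the forbidden set is not fixed: re-choosing any $\gamma_i$ changes the shifted list at $u_i$, and the choosability hypothesis may then return a completely different weighting $\psi$, so the $k$ forbidden values move with the commitment and no union-bound over the $2^{k+1}$ scenarios is available. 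If instead you freeze the $\gamma_i$ and vary only $\beta$, you have two candidate values of $T$ against $k$ pairwise distinct forbidden values, which can both be blocked as soon as $k\geq 2$. A permanent argument in the spirit of Lemma~\ref{art41} or Theorem~\ref{art42} would require algebraic data about $H$ (such as ${\rm pind}(A_H)=1$) that bare $(2,2)$-choosability does not supply, as you correctly observe. So the claim is not established.

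For what it is worth, the paper's own proof is no more complete at exactly this point: after the same shifting construction it simply writes that for $L'(v_k)$ one has $s(v_k)\neq s(v_i)$ for $i=1,\dots,k-1$, without explaining how the two available values in $L'(v_k)$ can dodge $k-1$ distinct targets and without exploiting the remaining freedom in the edge weights $w_i$. You have therefore reproduced the paper's argument and correctly isolated its weak step, but neither your proposal nor the paper closes it; finishing the proof requires a genuinely new mechanism for steering $s(v)$ past the clique's weighted degrees.
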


\begin{proof}
We prove this theorem by induction on $n$.
$n=1$ is trivial. By Lemma \ref{art34},
this Theorem holds for $n=2$.
Assume that the theorem holds for $n=k-1$.

Now, we consider the case $n=k$.
Let $G''$ be a graph obtained by identifying a vertex of
$G$ and a vertex of $K_{k-1}$, where $V(K_{k-1})=\{v_1, v_2,\ldots v_{k-1}\}$.
By induction, we can obtain that $G''$ is $(2,2)$-choosable.
Hence, there exists a $(2,2)$-total-weight-list assignment of $G''$ denoted by $L$
such that there exists a proper $L$-total-weighting of $G''$.
Next, based on the proper $L$-total weighting of $G''$, we need to find a proper $L'$-total weighting of $G'$.
Checking the structure of $G'$,
$G'$ be a graph obtained from $G''$ by adding a new vertex $v_{k}$ and some new edges
which joining $v_{k}$ to all vertices in $V(K_{k-1})$.
For convenience, denote by $e_{1}=v_1v_k,e_{2}=v_2v_k,\ldots,e_{k-1}=v_{k-1}v_k$ the new edges of $G'$.
Let $L'$ be a $(2,2)$-total-list assignment of $G'$, defined as follows:
For $L'(e_{1}), L'(e_{2}),\ldots L'(e_{k-1})$, choose $w_{1}\in L'(e_{1}),w_{2}\in L'(e_{2}),\ldots,w_{k-1}\in L'(e_{k-1})$;
$L'(z)=L(z)$ if $z \notin \{v_1, v_2,\ldots v_{k}, e_1, e_2,\ldots e_{k-1}\}$;
$L'(v_1)=L(v_1)-w_{1},L'(v_2)=L(v_2)-w_{2},\ldots,L'(v_{k-1})=L(v_{k-1})-w_{k-1}$;
For $L'(v_{k})$, $s(v_k)\neq s(v_i), i=1,2,\cdot\cdot\cdot, k-1$.
Checking all adjacent vertices $v, v'$ in $G'$, we have $s(v)\neq s(v')$.
Hence, we find a proper $L'$-total weighting of $G'$.

Based on the above argument, we obtain that $G'$ is $(2,2)$-choosable.
\end{proof}

According to Theorem \ref{art42},
we can obtain the following corollary.

\begin{corollary}\label{art44}
If $T$ is a tree, then ${\rm pind}(A_{R(T)})=1$
and hence $R(T)$ is $(2,2)$-choosable.
\end{corollary}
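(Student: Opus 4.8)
The plan is to induct on the number of edges of $T$, peeling off one leaf at a time and recognising each peeling step as an instance of the triangle–attachment operation from Theorem~\ref{art42}.

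\medskip\noindent\textbf{Base case.} If $T=K_1$ the statement is trivial, so we may assume $T$ has an edge. If $T=K_2$, then $R(T)$ is precisely the triangle $K_3$, which lies in $\mathscr{U}_3$; hence ${\rm pind}(A_{R(T)})=1$ by Theorem~\ref{art310}. (Equivalently, one may exhibit a $3\times 3$ submatrix of $A_{K_3}$ with nonzero permanent directly.)

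\medskip\noindent\textbf{Inductive step.} Suppose $T$ has at least two edges and the statement holds for all trees with fewer edges. Choose a leaf $\ell$ of $T$, let $p$ be its unique neighbour, and put $T'=T-\ell$, which is again a tree, now with one fewer edge. The key combinatorial observation is how $R$ behaves under leaf deletion: $V(R(T))$ consists of $V(R(T'))$ together with the two new vertices $\ell$ and $w_{\ell p}$ (the added vertex associated with the edge $\ell p$), and the only edges of $R(T)$ not already present in $R(T')$ are the three edges $\ell p$, $\ell w_{\ell p}$, $pw_{\ell p}$, which form a triangle meeting $R(T')$ in the single vertex $p$. Indeed, restricting $R(T)$ to $V(R(T'))$ returns exactly $R(T')$: every edge of $T$ other than $\ell p$ joins two vertices of $T'$, and for each edge $f\in E(T')$ the added vertex $w_f$ is joined in $R(T)$ to precisely the endpoints of $f$. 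Consequently $R(T)$ is obtained from the connected graph $R(T')$ by identifying its vertex $p$ with one vertex of a copy of $K_3$.

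\medskip\noindent\textbf{Conclusion.} By the induction hypothesis ${\rm pind}(A_{R(T')})=1$, so Theorem~\ref{art42} gives ${\rm pind}(A_{R(T)})=1$, and hence $R(T)$ is $(2,2)$-choosable because any graph $G$ with ${\rm pind}(A_G)=1$ is $(2,2)$-choosable. The only step needing care is verifying the identity $R(T)[V(R(T'))]=R(T')$ and that the three new edges form a triangle attached at the single vertex $p$; once this is in hand, everything else is a direct appeal to Theorems~\ref{art310} and \ref{art42}, so there is no genuine analytic obstacle — the permanent computation has already been isolated inside Theorem~\ref{art42}.
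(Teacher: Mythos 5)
Your proof is correct and matches the paper's (implicit) argument: the paper derives Corollary~\ref{art44} simply ``according to Theorem~\ref{art42}'', and your leaf-peeling induction, with $R(K_2)=K_3$ as the base case and the observation that $R(T)$ is $R(T-\ell)$ with a pendant triangle identified at the neighbour of the leaf $\ell$, is exactly the intended way to iterate that theorem. No gaps.
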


By Lemma \ref{art34} and Theorem \ref{art43},
we can get the following corollary naturally.

\begin{corollary}\label{art45}
If $T$ is a tree, then $L(T)$ and $Q(T)$ are both $(2,2)$-choosable.
\end{corollary}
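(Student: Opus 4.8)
The plan is to exhibit both $L(T)$ and $Q(T)$ as connected graphs each of whose blocks is a complete graph, and then to show that any such graph can be assembled from a single vertex by repeated use of the vertex‑identification operation of Theorem~\ref{art43} (with Lemma~\ref{art34} taking care of the blocks that are single edges).

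First I would record the two structural observations. For $L(T)$: its vertices are the edges of $T$, and the edges of $T$ incident to a fixed vertex $v$ form a clique of $L(T)$ of size $d_T(v)$. Since $T$ has no triangles, every maximal clique of $L(T)$ arises in this way; and because $T$ is a tree, the cliques coming from two distinct vertices $v,v'$ of $T$ meet only when $vv'\in E(T)$, in which case they share exactly the vertex $vv'$, and the pattern in which these cliques overlap is itself a tree. Hence the blocks of $L(T)$ are precisely the complete graphs $K_{d_T(v)}$ with $d_T(v)\ge 2$, glued at cut vertices. For $Q(T)$: writing $x_e$ for the vertex subdividing $e\in E(T)$, for each $v\in V(T)$ the set $\{v\}\cup\{x_e:e\ni v\}$ induces a clique $K_{d_T(v)+1}$, and for $e=uv$ the cliques centred at $u$ and at $v$ meet precisely in $\{x_e\}$; so $Q(T)$ is again a connected graph whose blocks are complete graphs, now glued at the cut vertices $x_e$ (each original leaf of $T$ becoming a pendant vertex in a $K_2$ block).

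Next I would prove the general reduction: if $H$ is a connected graph all of whose blocks are complete graphs, then $H$ is obtained from a one‑vertex graph by a finite sequence of operations, each identifying a vertex of the current graph with a vertex of some $K_{n}$. This goes by induction on the number of blocks, via the block–cut tree. If $H$ has a single block then $H\cong K_n$, which is obtained from $K_1$ by Theorem~\ref{art43} (the cases $n\le 2$ being exactly Lemma~\ref{art34}). Otherwise pick a block $B\cong K_n$ that is a leaf of the block–cut tree, let $w$ be its unique cut vertex, and set $H'=H-(V(B)\setminus\{w\})$; then $H'$ is connected, has one block fewer, and still has only complete blocks, so by induction $H'$ is built up as claimed, while $H$ is obtained from $H'$ by identifying $w$ with a vertex of $K_n$.

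Finally, $K_1$ is trivially $(2,2)$-choosable, and Theorem~\ref{art43} guarantees that identifying a vertex of a connected $(2,2)$-choosable graph with a vertex of $K_n$ preserves $(2,2)$-choosability; running induction along the sequence produced above shows that every connected graph whose blocks are complete is $(2,2)$-choosable, and in particular $L(T)$ and $Q(T)$ are $(2,2)$-choosable. The step needing the most care is the first one — verifying rigorously that every block of $L(T)$ and of $Q(T)$ is a complete graph and that distinct blocks share at most one vertex; once this is in place, the remainder is a routine induction on the block–cut tree together with a direct appeal to Theorem~\ref{art43} and Lemma~\ref{art34}.
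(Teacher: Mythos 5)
Your argument is correct and is exactly the route the paper intends: the paper derives Corollary~\ref{art45} directly from Lemma~\ref{art34} and Theorem~\ref{art43}, which amounts to your observation that $L(T)$ and $Q(T)$ are connected graphs whose blocks are complete, assembled by repeatedly identifying a vertex with a vertex of some $K_n$. You have simply made explicit the block-structure verification and the induction on the block--cut tree that the paper leaves implicit.
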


\noindent{\bf Conflict of Interest Statement}

The authors declare that they have no conflicts of interest.

\end{document}